\newtheorem{theorem}{Theorem}[section]
\newtheorem{lemma}[theorem]{Lemma}
\newtheorem{proposition}[theorem]{Proposition}
\newtheorem{corollary}[theorem]{Corollary}
\newtheorem{conjecture}[theorem]{Conjecture}
\theoremstyle{definition}
\newtheorem*{ack}{Acknowledgements}
\newtheorem{remark}[theorem]{Remark}
\newtheorem{example}[theorem]{Example}
\newtheorem{definition}[theorem]{Definition}
\newtheorem{alphtheorem}{Theorem}
\numberwithin{equation}{section} \numberwithin{figure}{section}
\DeclareMathOperator{\Spec}{Spec}
\DeclareMathOperator{\an}{an}
\DeclareMathOperator{\Hom}{Hom}
\DeclareMathOperator{\ev}{ev}
\DeclareMathOperator{\supp}{supp}
\newcommand*\ratmap{\mathbin{\tikz [baseline=-0.25ex,-latex, dashed, ->] \draw [densely dashed] (0pt,0.5ex) -- (1.3em,0.5ex);}}
\newcommand{\et}{\textrm{\'{e}t}}
\definecolor{orange}{rgb}{1,0.5,0}
\title[Weakly-special threefolds and non-density of rational points]{Weakly-special threefolds and non-density of rational points}
\author{Finn Bartsch}
\address{Finn Bartsch \\
IMAPP Radboud University Nijmegen \\
PO Box 9010, 6500GL \\
Nijmegen, The Netherlands\\}
\email{f.bartsch@math.ru.nl }
\author{Ariyan Javanpeykar}
\address{Ariyan Javanpeykar \\ 
IMAPP Radboud University Nijmegen \\
PO Box 9010, 6500GL\\
Nijmegen, The Netherlands}
\email{ariyan.javanpeykar@ru.nl }
\author{Erwan Rousseau}
\address{Erwan Rousseau \\ Univ Brest\\
		CNRS UMR 6205
		\\Laboratoire de Mathematiques de Bretagne Atlantique\\ F-29200 Brest, France
		}
\email{erwan.rousseau@univ-brest.fr}
\subjclass[2010]
{14G99 %Arithmetic problems 
(11G35,  %Varieties over global fields
14G05,  %Rational points
32Q45)} %hyperbolicity
\keywords{hyperbolicity, function fields, rational points, Lang--Vojta conjecture}
\begin{document}

\begin{abstract}   
We verify   part of a conjecture of Campana predicting that   rational points on the weakly-special non-special simply-connected smooth projective threefolds constructed by Bogomolov--Tschinkel are not dense.   To prove our result, we establish fundamental properties of moduli spaces of orbifold maps, and prove a dimension bound for such moduli spaces by using the recent extension of Kobayashi--Ochiai's finiteness theorem for Campana's orbifold pairs. 
\end{abstract}

\maketitle
 
\thispagestyle{empty}

\section{Introduction}    
A variety over a field $k$ is a geometrically integral finite type separated scheme over $k$. Lang's conjecture on rational points over number fields \cite[Conjecture~5.7]{Langconj} predicts that, for a smooth projective variety $X$ of general type over a number field $K$, the set $X(L)$ of $L$-points on $X$ is not dense for every finitely generated field extension $L/K$.  In this paper we are concerned with a class of varieties ``opposite'' to the class of varieties of general type.

\begin{definition}
A smooth projective variety $X$ over a field $k$   is \emph{weakly-special} if no finite \'etale cover of $X_{\overline{k}}$ admits a dominant rational map $X_{\overline{k}}\ratmap Y$ to   a positive-dimensional smooth projective variety of general type $Y$ over $\overline{k}$,  where $\overline{k}$ is an algebraic closure of $k$.
\end{definition}

Let $X$ be a projective variety over a finitely generated field $K$ of characteristic zero. Lang's conjecture combined with the Chevalley-Weil theorem predicts that, if $X(K)$ is dense, then $X$ is weakly-special.  
In \cite[Conjecture~1.2]{HarrisTschinkel} the converse of this statement was conjectured over number fields. The following conjecture is its natural extension to all finitely generated fields of characteristic zero.

\begin{conjecture} \label{conj}
If $X$ is a weakly-special smooth projective geometrically connected variety over a finitely generated field $K$ of characteristic zero, then there exists a finite field extension $L/K$ such that $X(L)$ is dense in $X$.
\end{conjecture}

This conjecture  is incompatible with a series of conjectures introduced by Campana in his foundational work on special varieties  \cite[\S~13.6]{Ca11}. Indeed, Campana conjectured that $X$ is special if and only if there exists a finite field extension $L/K$ such that $X(L)$ is dense in $X$.  However, Bogomolov--Tschinkel \cite{BT} constructed examples of weakly-special non-special varieties, so that the two conjectures are in conflict. We formalize their construction as follows.

\begin{definition}\label{defn:bt}
If $k$ is a field, then a $k$-scheme $X$ is called a \emph{BT-threefold} if it fits into a cartesian square
\begin{equation*} \begin{tikzcd}
X \ar[r] \ar[d] & S \ar[d, "\psi"] \\ B \ar[r, "\phi"] & \mathbb{P}^1
\end{tikzcd} \end{equation*}
where 
\begin{itemize}
\item $S$ is a smooth projective surface and $\psi$ is a non-isotrivial elliptic fibration.
\item The fiber $\psi^{-1}(0)$ is a multiple fiber with multiplicity $m$, and this is the only multiple fiber. 
\item $B$ is a smooth projective surface of Kodaira dimension $1$ whose associated elliptic fibration is non-isotrivial.
\item The map $\phi$ has fibers of genus $g \geq 2$.
\item The fiber $D := \phi^{-1}(0)$ is smooth.
\item $B \setminus D$ is simply-connected.
\item The singular loci of $\phi$ and $\psi$ are disjoint. 
\end{itemize}
Additionally, a BT-threefold is a \emph{BTCP-threefold} if
\[
c_1(B)^2 + (1-\tfrac{1}{m})K_B\cdot D > c_2(B) + (\tfrac{1}{m}-\tfrac{1}{m^2})D^2.
\]
If $X$ is a BT-threefold, then we will refer to the morphism $X \to B$ as the \emph{associated elliptic fibration} and define the $\mathbb{Q}$-divisor $\Delta := \left(1-\frac{1}{m}\right) D.$
\end{definition}

Our starting point in this paper is the culmination of the work of Bogomolov--Tschinkel \cite{BT}, Campana--P\u{a}un \cite{CampanaPaun2007} and \cite{CampanaWinkelmannBrody}; see Section \ref{section:bt_threefolds} for a detailed discussion.

\begin{theorem}[Bogomolov--Tschinkel, Campana--P\u{a}un, Campana--Winkelmann]
The following statements hold. 
\begin{enumerate}
\item There exist BTCP-threefolds over $\mathbb{Q}$.
\item A BT-threefold over $\mathbb{C}$ is a simply-connected smooth projective weakly-special variety which is not special.
\item If $X$ is a BTCP-threefold over $\mathbb{C}$, then $X^{\an}$ does not have a dense entire curve.
\item There exist BTCP-threefolds $X$ over $\mathbb{C}$ such that the Kobayashi pseudometric on $X^{\an}$  does not vanish identically.
\end{enumerate}
\end{theorem}

Our main result is concerned with the distribution of rational points on BTCP-threefolds over function fields. By Campana's conjectures, they should not be dense over any given finitely generated field of characteristic zero. We prove part of this function field analogue of the above analytic results in Section \ref{section:bt_threefolds}.

\begin{alphtheorem} \label{thm:main_final}
Let $X$ be a BTCP-threefold over a finitely generated field $K$ of characteristic zero and let $V$ be a variety over $K$. Then the set of non-constant rational maps $V \ratmap X$ is not dense in $X_{K(V)}$.   
\end{alphtheorem} 

Theorem \ref{thm:main_final} can be reformulated geometrically as follows: if $X$ is a BTCP-threefold over a finitely generated field $K$ of characteristic zero and $V$ is a variety over $K$, then the union $\cup_f \Gamma_f$ of graphs $\Gamma_f$ with $f\colon V\ratmap X$ a non-constant rational map is not dense in $X\times V$.

For $X$ as in Theorem \ref{thm:main_final}, the natural extension of Campana's arithmetic conjectures (see \cite[Conjecture~1.6]{JR}) predicts that $X(K(V))$ is not dense in $X$. If we define $X(K(V))^{nc}$ to be the subset of $K(V)$-points which define a non-constant rational map $V \ratmap X$, then our main result only guarantees the non-density of the subset $X(K(V))^{nc}$ in $X_{K(V)}$.

Assuming Vojta's higher-dimensional abc conjecture \cite{VojtaIMRN}, it can be deduced from \cite[Corollary~3.3]{AbramovichVA} that the rational points on a BTCP-threefold over a number field $K$ are not dense.  By constructing new weakly-special non-special threefolds,  it was shown  \cite{BCJW}  that  Conjecture \ref{conj} contradicts the abc conjecture.
 
  Theorem \ref{thm:main_final} is one of the few results we have concerning the geometric discrepancy between special and weakly-special varieties. Its proof relies on Faltings's finiteness theorem for rational points on higher genus curves over number fields. Indeed, in the case that $V$ is a smooth projective curve, Theorem \ref{thm:main_final} can be reformulated as saying that the ``moduli space of non-constant orbifold maps from $V$ to the orbifold base $(B,\Delta)$ of $X\to B$'' has only finitely many $K$-rational points; see Section \ref{section:orbifolds} for the definition of the orbifold base.  A large part of our proof is thus dedicated to proving that positive-dimensional irreducible components of such moduli spaces are birational to higher genus curves (see Theorem \ref{thm:main2} for a precise statement).

Note that, using different ideas and building on work of Corvaja--Zannier \cite{CZ04} and Ru--Vojta \cite{RV}, other examples of weakly-special non-special threefolds  $X'$ were constructed in \cite{RTW}. However,  the non-density of rational points over function fields on the threefolds constructed in \emph{loc. cit.}   (i.e., the analogue of Theorem \ref{thm:main_final}) remains   unknown, even though they also come with an elliptic fibration $X'\to B'$ onto a surface $B'$.  The main difficulty in extending our work to those threefolds is that $B'$   in their examples is uniruled, and  our proof of Theorem \ref{thm:main_final} heavily relies on the results  in Section \ref{section:dimension_of_moduli}  which only apply to non-uniruled surfaces.

\subsection{Outline of paper}
To prove Theorem \ref{thm:main_final}, we first reduce to the case that $V$ is a smooth projective curve using a standard cutting argument (Lemma \ref{lemma:cutting_argument}). Let $X\to B$ be the elliptic fibration on the BTCP-threefold $X$. Since this fibration is non-isotrivial, almost all of the composed maps $V \to X \to B$ will be non-constant by the isogeny theorem for elliptic curves over number fields (Lemma \ref{lemma:isotriviality}). Thus, we are naturally led to studying curves on the Kodaira dimension one surface $B$.  

At this point we  note that the curves in $B$ which ``come from $X$'' satisfy tangency conditions with respect to the divisor $D$. To keep track of these tangency conditions, it is natural to use Campana's notion of an orbifold pair and an orbifold morphism (see Section \ref{section:orbifolds}). The main   finiteness result we prove for orbifolds in this paper is Theorem \ref{thm:main2}.  

Our proof of Theorem \ref{thm:main_final}  relies on a study of the subset $\mathcal{H}$ of non-constant maps from a curve $C$ to a surface $B$ satisfying Campana-like tangency conditions inside the Hom-scheme parametrizing all morphisms from $C$ to $B$; we show that $\mathcal{H}$ is naturally a locally closed subset and that the universal evaluation map $C \times \mathcal{H} \to B$  likewise satisfies similar tangency conditions with respect to $D$ (Theorem \ref{thm:algebraicity_of_moduli}). That is, the universal evaluation map ``inherits'' the orbifold properties from the maps it parametrizes. It is this latter property which is central to our proof.  In its proof we use the fact that, given a family of orbifold morphism $(f_t \colon C \to (B,\Delta))_{t\in T}$ with $T$ a smooth parameter space, the total morphism $C\times T \to B$ is again an orbifold morphism $C\times T\to (B,\Delta)$; see Theorem \ref{thm:families_of_orbifold_maps} for a precise statement.

To prove the non-density of the non-constant $K(V)$-points on $X$, we are thus led to study Mordell-type properties of the moduli space $\mathcal{H}$. In fact, we seek to establish the finiteness of its $K$-rational points for every finitely generated field $K$ of characteristic zero. The finiteness of $K$-rational points of $\mathcal{H}$ is achieved in three steps:

\begin{enumerate}
\item By the theory of Hilbert schemes, $\mathcal{H}$ is a priori a countable union of quasi-projective schemes. The first step of our proof consists of showing that the moduli space $\mathcal{H}$ is in fact quasi-projective (i.e., of finite type). We do this by generalizing Bogomolov's theorem on surfaces of general type with positive second Segre class to the setting of Campana's orbifold pairs (proven in Section \ref{section:bogomolovs_thm}, where   the terminology is also explained). \end{enumerate}

\begin{alphtheorem}[Bogomolov's theorem for orbifold surfaces]\label{thm:bogomolov}
If $(X,\Delta)$ is a smooth projective orbifold surface of general type  and $c_1(X,\Delta)^2>c_2(X,\Delta)$, then $(X, \Delta)$ is pseudo-algebraically hyperbolic over $k$.
\end{alphtheorem} 

\begin{enumerate}[resume]
\item The second  observation is that the dimension of $\mathcal{H}$ is at most one (Corollary \ref{corollary:boundedness}). Here we apply the recently established orbifold extension of the theorem of Kobayashi--Ochiai  \cite[Theorem~1.1]{BJ}. Namely, a given variety $Y$ admits only finitely many surjective morphisms to a given orbifold $(B,\Delta)$ of general type; see the proof of Lemma \ref{lemma:ko_app} for details. At this point we know that $\mathcal{H}$ is at most one-dimensional.
\item To prove finiteness of $K$-points on $\mathcal{H}$, we have to exclude the possibility that $\mathcal{H}$ has components of genus zero or one, so that we can appeal to Faltings's theorem for hyperbolic curves (\emph{formerly} Mordell's conjecture). This part of the argument  uses that $B$ is a Kodaira dimension one surface whose associated elliptic fibration is non-isotrivial (whereas the previous steps can be performed in a far more general setting); see Corollary \ref{cor:finiteness_for_orbifold_surface1} for a precise statement.
\end{enumerate}
 
\begin{ack}
We are grateful to Remke Kloosterman for helpful discussions on elliptic fibrations  and, in particular, for    Remark \ref{remark:kloosterman}.
\end{ack}

\section{Campana's orbifold pairs} \label{section:orbifolds}
Let $k$ be a field of characteristic zero. Recall that a variety over $k$ is a geometrically integral finite type separated scheme over $k$.  

\begin{definition}
A \emph{$\mathbb{Q}$-orbifold (over $k$)} $(X, \Delta)$ is a variety $X$ together with a $\mathbb{Q}$-Weil divisor $\Delta$ on $X$ such that all coefficients of $\Delta$ are in $[0,1]$. If $\Delta = \sum_i \nu_i \Delta_i$ is the decomposition of $\Delta$ into prime divisors, we say that $m(\Delta_i) := (1-\nu_i)^{-1}$ is the \emph{multiplicity} of $\Delta_i$ in $\Delta$. If all multiplicities of a $\mathbb{Q}$-orbifold are in $\mathbb{Z} \cup \{ \infty\}$, we say that $(X,\Delta)$ is an \emph{orbifold}.
\end{definition}

An important class of orbifold divisors are those associated to a fibration with multiple fibers (see \cite[Definition~4.2]{Ca11}:   

\begin{definition}[Orbifold base]\label{definition:orbifold_base}  
Let $f \colon X \to Y$ be a surjective morphism of normal varieties over $k$ with geometrically connected fibers. Assume $Y$ is smooth.  Let $D \subset Y$ be a prime divisor of $Y$, and note that   $f^\ast D$   is a non-empty divisor on $X$ and that  we may write  $f^\ast D = R+\sum_{i} t_i\cdot F_i $, 
where $R$ is an effective $f$-exceptional divisor (i.e., none of its components surject onto $D$), each $F_i$ is an irreducible component of $X_{D}$ surjecting onto $D$, and $t_i\in \mathbb{Z}_{\geq 1}$.   Let  $m_f(D) := \inf \{t_i\}.$  We define the 
\emph{orbifold divisor} $\Delta_f$ \emph{of $f$} to be $$\Delta_f := \sum_{D} \left(1-\frac{1}{m_f(D)}\right) D,$$ where the sum runs over all prime divisors of $Y$. We refer to the orbifold $(Y,\Delta_f)$ as the \emph{orbifold base of} $f$.
\end{definition}

An orbifold $(X, \Delta)$ is an \emph{orbifold curve} (resp. \emph{orbifold surface}) if $\dim X = 1$ (resp. $\dim X = 2$).
A $\mathbb{Q}$-orbifold $(X, \Delta)$ is \emph{smooth} if the underlying variety $X$ is smooth and the support of the orbifold divisor $\supp \Delta$ is a divisor with strict normal crossings. It is \emph{normal} (resp. \emph{locally factorial}, resp. \emph{proper}, resp. \emph{projective}) if $X$ is normal (resp. locally factorial, resp. proper over $k$, resp. projective over $k$).

\begin{definition}[Morphisms]
Let $(X, \Delta_X)$ be a normal $\mathbb{Q}$-orbifold and $(Y, \Delta_Y)$ be a locally factorial $\mathbb{Q}$-orbifold. In this case, we define a \emph{morphism} of $\mathbb{Q}$-orbifolds $f \colon (X, \Delta_X) \to (Y, \Delta_Y)$ to be a morphism of varieties $f \colon X \to Y$ satisfying $f(X) \nsubseteq \supp \Delta_Y$ such that, for every prime divisor $E \subseteq \supp \Delta_Y$ and every prime divisor $D \subseteq \supp f^\ast E$, we have $t m(D) \geq m(E)$, where $t \in \mathbb{Q}$ denotes the coefficient of $D$ in $f^\ast E$; the local factoriality of $Y$ ensures that $E$ is a Cartier divisor, so that $f^\ast E$ is well-defined.
\end{definition}

If $X$ is a normal variety, we identify $X$ with the orbifold $(X, 0)$. If $X$ and $Y$ are varieties such that $X$ is normal and $Y$ is locally factorial, every morphism of varieties $X \to Y$ is an orbifold morphism $(X,0) \to (Y,0)$.

We will be interested in rational maps satisfying the orbifold condition. To make this precise, we follow \cite{BJ} and work with orbifold near-maps.

\begin{definition}
An open subscheme $U \subseteq X$ of a variety $X$ is \emph{big} if its complement is of codimension at least two. A rational map $X \ratmap Y$ of varieties is a \emph{near-map} if there is a big open $U \subseteq X$ such that $U \ratmap Y$ is a morphism. 
\end{definition}

Note that a rational map $X \ratmap Y$ is a near-map if and only if it is defined at all codimension one points of $X$. For example, for every normal variety $X$ and any proper variety $Y$, every rational map $X \ratmap Y$ is a near-map.

\begin{definition}
Let $(X, \Delta_X)$ be a normal $\mathbb{Q}$-orbifold and $(Y, \Delta_Y)$ be a $\mathbb{Q}$-orbifold such that $Y$ is locally factorial. An \emph{orbifold near-map} \[f \colon (X, \Delta_X) \ratmap (Y, \Delta_Y)\] is a near-map $f \colon X \ratmap Y$ satisfying $f(X) \nsubseteq \supp \Delta_Y$ such that, for every prime divisor $E \subseteq \supp \Delta_Y$ and every prime divisor $D \subseteq \supp f^*E$, we have $t m(D) \geq m(E)$, where $t \in \mathbb{Q}$ denotes the coefficient of $D$ in $f^*E$; this pullback is well-defined as $E$ is Cartier.
\end{definition}

\subsection{Chern classes}
Let $(X,\Delta)$ be a smooth projective orbifold surface. To state our main result, we will need to define the Chern classes of $(X,\Delta)$. Let $\Delta = \sum (1-\frac{1}{m_i})\Delta_i$ be the decomposition into irreducible components. Let $\mathrm{CH}(X)$ be the Chow ring of $X$ and let $\mathrm{CH}^d \subset \mathrm{CH}(X)$ denote the subgroup of codimension $d$ cycle classes.
\begin{definition}\label{definition:chernclasses}
We define 
\begin{align*}
\mathbf{c_1}(X,\Delta) &:= -(K_X + \Delta) && \text{in } \mathrm{CH}^1(X) \otimes \mathbb{Q} \\ 
\mathbf{c_2}(X,\Delta) &:= \mathbf{c_2}(X) + \sum_i \left(1-\frac{1}{m_i}\right)(K_X+\Delta_i).\Delta_i + \sum_{i < j} \left(1-\frac{1}{m_i m_j}\right)\Delta_i.\Delta_j &&\text{in } \mathrm{CH}^2(X) \otimes \mathbb{Q}.
\end{align*}
\end{definition}

We will be interested in the degrees of $\mathbf{c_1}^2$ and $\mathbf{c_2}$. Thus, we define:
\[ c_1(X,\Delta)^2 := \deg \mathbf{c_1}(X,\Delta)^2 \quad \textrm{and} \quad c_2(X,\Delta) := \deg \mathbf{c_2}(X,\Delta) \quad \text{in }\mathbb{Q}. \]

If $\Delta$ is the trivial divisor, then $\mathbf{c_1}(X,\Delta) = \mathbf{c_1}(X)$ and $\mathbf{c_2}(X,\Delta) = \mathbf{c_2}(X)$, where $\mathbf{c_i}(X)$ is the $i$-th Chern class of $T_X$, so that we recover the ``usual'' Chern classes of $X$. If the multiplicities of $\Delta$ are all infinite, then $\mathbf{c_i}(X,\Delta) = \mathbf{c_i}(T_X(-\log \Delta))$, so that we recover the Chern classes of the log-pair $(X,\Delta)$. If $D$ is a prime divisor on $X$ and $\Delta = \left(1-\frac{1}{m}\right)D$ with $m\geq 1$, then the (numerical) Chern classes of $(X,\Delta)$ defined above are the (numerical) Chern classes of $T_{\mathcal{X}}$, where $\mathcal{X} := \sqrt[m]{X/D}$ is the $m$-th root stack of $X$ along $D$.

\begin{example}\label{example:btcp}
Let $X$ be a BT-threefold (Definition \ref{defn:bt}) with associated elliptic fibration $X\to B$ and write $\Delta = (1-\frac{1}{m}) D$. Then $(B,\Delta)$ is the orbifold base of $X\to B$ and $X$ is a BTCP-threefold if and only if 
\[ c_1\left(B,\Delta \right)^2 > c_2\left(B,\Delta \right). \] 
\end{example}

\subsection{Main result}
Our main   finiteness result for orbifold surfaces is as follows (and is proven in Section \ref{section:proof_of_thm}).  

\begin{alphtheorem}\label{thm:main2} 
Let $(B,\Delta)$ be a smooth projective orbifold of general type over a finitely generated field $K$ of characteristic zero, where $B$ is a Kodaira dimension one surface with non-isotrivial elliptic fibration. If $c_1(B,\Delta)^2 > c_2(B,\Delta)$, then there is a proper closed subset $Z \subsetneq B$ such that, for every finitely generated field extension $L/K$ and every variety $V$ over $L$, the set of non-constant near-maps $f \colon V \ratmap (B_L,\Delta_L)$ with $f(V) \not\subset Z$ is finite.
\end{alphtheorem}
 
Theorem \ref{thm:main2} is a Mordellicity statement (i.e., a finiteness result for rational points) for moduli spaces of orbifold maps. It is a natural orbifold extension of \cite[Theorem~1.3]{JMRL}, and implies that $(X,\Delta)$ is pseudo-$p$-Mordellic for $p>0$ (where we freely adapt the terminology in \cite[Section~2]{EJR} to the orbifold setting). Its proof is a mixture of algebro-geometric and arithmetic arguments. The geometric ingredients of its proof include the recent extension of Kobayashi--Ochiai's finiteness theorem for dominant maps to a variety of general type (Theorem \ref{thm:ko}) and an orbifold extension of Bogomolov's theorem for surfaces with positive second Segre class (Theorem \ref{thm:bogomolov}). We will also need two arithmetic finiteness results: (1) Faltings's proof of the Mordell conjecture and (2) Shafarevich's isogeny theorem for elliptic curves.

We stress that Theorem \ref{thm:main2} is \emph{false} over algebraically closed fields of characteristic zero, i.e., the assumption that $K$ is finitely generated can not be omitted. This is simply because there are surfaces $B$ as in Theorem \ref{thm:main2} which are dominated by a product of (higher genus) curves; see Remark \ref{remark:kloosterman} for an explicit example.

Theorem \ref{thm:main2} is reminiscent of the theorem of De Franchis that, for $V$ and $X$ varieties, the set of non-constant morphisms $V \to X$ is finite when $X$ is a log-general type curve. Such finiteness results pertain to the finiteness of certain Hom-schemes, whereas Theorem \ref{thm:main2} only guarantees the Mordellicity of the relevant Hom-schemes (as the desired zero-dimensionality can certainly fail as we just explained).

As we briefly explained in the introduction, Theorem \ref{thm:main2} is used to prove Theorem \ref{thm:main_final}. In fact, for the threefolds $X$ considered in Theorem \ref{thm:main_final}, there is a smooth projective surface $B$ of Kodaira dimension one and an elliptic fibration $X\to B$ whose orbifold base $(B,\Delta)$ (see Definition \ref{definition:orbifold_base}) is of general type and satisfies $c_1(B,\Delta)^2 > c_2(B,\Delta)$.  Now,  we observe that almost all of the non-constant rational maps   $V\ratmap X$   considered in Theorem \ref{thm:main_final} give rise to non-constant orbifold near-maps $V \ratmap (B,\Delta)$, and the latter are finite modulo some exceptional locus by Theorem \ref{thm:main2}. We refer to Section \ref{section:bt_threefolds} for details.

% The theorem of De Franchis has been extended to higher-dimensional varieties (under suitable conditions). For example, if $X$ is a smooth projective variety with ample cotangent bundle, then the set of non-constant morphims $V\to X$ is finite \cite[Corollary~6.3.30]{Lazzie2}; see \cite{Lu} for other closely related De Franchis-type results.

\section{The moduli space of orbifold maps}\label{section:moduli_of_orbifold_maps}

We show that the subset of orbifold maps inside the moduli space of maps from a fixed curve to an orbifold defines a locally closed subscheme (see Corollary \ref{cor:hom_curve_to_orbifold}). We deduce this from another result on  families of orbifold maps (Theorem \ref{thm:families_of_orbifold_maps}).

\subsection{Vanishing of sections of line bundles}
In this section we prove the following presumably well-known result; due to lack of reference we include a proof. We stress that in this statement the scheme $X$ is assumed to have no embedded points \cite[Tag~05AK]{stacks-project}, but may be very well nonreduced. 
 
\begin{proposition}\label{proposition:vanishing_of_t}
Let $f \colon X \to S$ be a finite type dominant morphism of schemes, where $X$ is an irreducible scheme with no embedded points and $S$ is an integral noetherian scheme. Let $\mathcal{L}$ be a line bundle on $X$ and let $t\in \mathcal{L}(X)$ be a global section such that, for a dense set of points $s \in S$, the restriction of $t$ to $X_s$ vanishes. Then $t=0$.
\end{proposition}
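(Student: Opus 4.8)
The plan is to reduce to a local statement about the vanishing of a function at the generic point of $X$, using the hypothesis of ``no embedded points'' to conclude that vanishing at the generic point forces vanishing everywhere. First I would work locally: choose an affine open $\Spec A \subseteq X$ meeting the generic point $\eta$ of $X$, and an affine open $\Spec R \subseteq S$ over which it lies, with $R$ a noetherian domain and $A$ a finite type $R$-algebra; since $X$ has no embedded points, the associated primes of $A$ are exactly the minimal primes, and since $X$ is irreducible there is a unique minimal prime, the nilradical $\mathfrak{n}$, so $A$ has a single associated prime. Trivializing $\mathcal{L}$ on $\Spec A$, the section $t$ becomes an element $a \in A$, and the hypothesis says that for $s$ in a dense subset of $\Spec R$ — hence (after intersecting with the dense open image of $f$ and the dense open $\Spec R$) for $s$ in a dense subset of $\Spec R$ — the image of $a$ in $A \otimes_R \kappa(s)$ vanishes. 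It suffices to prove $a = 0$ in $A$.

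Next I would exploit the single associated prime. Because $\operatorname{Ass}(A) = \{\mathfrak{n}\}$, the natural map $A \to A_{\mathfrak{n}}$ is injective, so it is enough to show that $a$ maps to $0$ in $A_{\mathfrak{n}}$; equivalently, since $A_{\mathfrak{n}}$ is an Artinian local ring with residue field $K(X) = \operatorname{Frac}(A/\mathfrak{n})$ and with $\mathfrak{n}A_{\mathfrak{n}}$ nilpotent, it suffices to show that $a$ dies in the localization at the generic point. The key point is then: if $a \neq 0$ in $A_{\mathfrak{n}}$, I want to produce a dense set of closed fibers on which it does not vanish, contradicting the hypothesis. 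For this I would use a generic-flatness / constructibility argument: the function $s \mapsto \dim_{\kappa(s)}$ of the fiber of the coherent sheaf $aA \subseteq A$ (or rather, the locus where the image of $a$ in $A \otimes_R \kappa(s)$ is nonzero) is a constructible condition on $\Spec R$. Concretely, consider the $R$-submodule $M := aA \subseteq A$ and the quotient $A/M$; by generic flatness there is a dense open $\Spec R_g \subseteq \Spec R$ over which both $A$ and $A/M$ are flat (hence free after shrinking) $R_g$-modules, and over which formation of $M = \ker(A \to A/M)$ commutes with base change. Over such an open, $a$ restricted to the fiber $X_s$ is nonzero for \emph{every} $s$ as soon as $a \neq 0$ generically, because $aA \otimes_{R_g} \kappa(s) = a\cdot(A\otimes_{R_g}\kappa(s))$ is a nonzero free module (its rank being the generic rank of $aA$, which is positive since $a\neq 0$ and $A$ is $R$-torsion-free by irreducibility of $X$ over the image... ).

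The main obstacle, and the step I expect to require the most care, is the interaction between the base change $A \otimes_R \kappa(s)$ and the submodule structure: vanishing of $t|_{X_s}$ means $a$ maps to $0$ in $A \otimes_R \kappa(s)$, but a priori this could happen because of $R$-torsion phenomena or because $A \to A\otimes_R \kappa(s)$ is badly behaved on a non-flat locus. The ``no embedded points'' and irreducibility hypotheses are exactly what is needed to rule this out: they guarantee $A$ is $R$-torsion-free (every nonzero element of $R$ avoids the unique associated prime $\mathfrak n \cap R = (0)$, so acts injectively), and generic flatness then upgrades this to a dense open over which the fibers detect the generic behaviour faithfully. I would therefore structure the proof as: (1) reduce to the affine, torsion-free, single-associated-prime setting; (2) apply generic flatness to $A$ and $aA$ to get a dense open $\Spec R_g$; (3) observe that if $a\neq 0$ then $a$ is nonzero in every fiber over $\Spec R_g$, and since the image of $f$ is dense and the hypothesis gives a dense set of $s$ with $t|_{X_s}=0$, these two dense sets meet in $\Spec R_g$, a contradiction; (4) conclude $a = 0$ in $A_{\mathfrak n}$, hence in $A$ by injectivity of $A \hookrightarrow A_{\mathfrak n}$, hence $t = 0$ globally by covering $X$ with such affines.
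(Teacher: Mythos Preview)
Your approach is correct and uses the same ingredients as the paper: generic freeness over the base ring and the $R$-torsion-freeness of $A$ coming from irreducibility, dominance, and the absence of embedded points. The paper packages this as two separate lemmas --- first showing $t$ vanishes on the generic fiber via the direct observation $m \in \bigcap_i \mathfrak{p}_i M = (\bigcap_i \mathfrak{p}_i)M = 0$ once $M$ is made free by generic freeness, then using torsion-freeness to spread vanishing to all of $X$ --- whereas you run essentially the contrapositive in one pass (your detour through $A_{\mathfrak{n}}$ is unnecessary but harmless, since your step~(3) already yields $a=0$ in $A$ directly).
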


We proceed in two steps. First, we show vanishing of $t$ on the generic fiber and then use this to show global vanishing.

\begin{lemma}[Vanishing on generic fiber]\label{lemma:generic_vanishing}
Let $f \colon X \to S$ be a finite type morphism of schemes, where $S$ is an integral noetherian scheme with generic point $\eta$. Let $\mathcal{L}$ be a line bundle on $X$ and let $t \in \mathcal{L}(X)$ be a global section such that, for a dense set of points $s \in S$, the restriction of $t$ to $X_s$ vanishes. Then the restriction of $t$ to the generic fiber $X_\eta$ vanishes. 
\end{lemma}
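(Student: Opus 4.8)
The plan is to reduce to a local, algebraic statement and then invoke a prime-avoidance / support argument. First I would observe that the conclusion is local on $X$: a section $t \in \mathcal{L}(X)$ vanishes on $X_\eta$ if and only if its restriction to $U_\eta$ vanishes for every member $U$ of an open affine cover of $X$. Since $\mathcal{L}$ is locally trivial, after passing to such a cover I may assume $\mathcal{L} = \mathcal{O}_X$, that $X = \Spec A$ is affine, and that $t \in A$; I should also reduce $S$ to an affine neighborhood $\Spec R$ of $\eta$, so that $R$ is a noetherian domain with fraction field $\kappa(\eta) = \operatorname{Frac}(R)$ and $A$ is a finite-type $R$-algebra. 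The section $t$ vanishes on $X_\eta$ precisely when $t$ maps to $0$ in $A \otimes_R \operatorname{Frac}(R)$, i.e.\ when $t$ is killed by some nonzero element of $R$; equivalently, $t$ lies in the $R$-torsion submodule of $A$.

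Next I would translate the hypothesis. The fiber $X_s$ over $s = \mathfrak{p} \in \Spec R$ is $\Spec\bigl(A \otimes_R \kappa(\mathfrak{p})\bigr)$, and $t$ restricts to $0$ on $X_s$ iff $t \in \mathfrak{p}A + (\text{ideal generated by } \mathfrak{p})$ — more precisely iff the image of $t$ in $A/\mathfrak{p}A \otimes_{R/\mathfrak{p}} \kappa(\mathfrak{p})$ is zero. The cleanest route is to pick a generic flatness stratification (generic flatness, \cite[Tag~051Q]{stacks-project} or \cite[Tag~0529]{stacks-project}): there is a nonzero $g \in R$ such that $A_g$ is a free $R_g$-module. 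Over the dense open $D(g) \subseteq \Spec R$ the hypothesis still holds (a dense subset of $S$ meets $D(g)$ in a dense subset, as $\Spec R$ is irreducible). Writing $t$ in terms of an $R_g$-basis of $A_g$, the condition ``$t|_{X_s} = 0$'' for $s \in D(g)$ says that every coordinate of $t$ lies in $\mathfrak{p}R_g$; since this holds for $\mathfrak{p}$ ranging over a dense — hence, in a Jacobson-type or simply Krull-intersection sense, sufficiently large — set of primes, each coordinate lies in $\bigcap_{\mathfrak{p}} \mathfrak{p}R_g = 0$ (the nilradical of the domain $R_g$ is zero, and a dense set of primes has zero intersection in a noetherian domain — indeed already the generic point's hypothesis or any two distinct enough primes force coordinates into $(0)$ after localizing). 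Hence $t = 0$ in $A_g$, so $t$ restricts to $0$ on $X_\eta = (\Spec A_g)_\eta$.

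The one point requiring care — and the main obstacle — is the passage ``each coordinate lies in $\mathfrak{p}R_g$ for a dense set of $\mathfrak{p}$, therefore the coordinate is $0$.'' This is not literally the Krull intersection theorem applied to all maximal ideals; rather, for a noetherian domain $R_g$ and a nonzero $a \in R_g$, the locus $V(a) = \{\mathfrak{p} : a \in \mathfrak{p}\}$ is a proper closed subset of $\Spec R_g$, hence not dense. So if a coordinate $a$ of $t$ were nonzero, the set of $\mathfrak{p}$ with $a \in \mathfrak{p}R_g$ would be contained in the non-dense closed set $V(a) \cup V(g)$, contradicting that $t|_{X_s} = 0$ on a dense set. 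I would spell this contrapositive out carefully, making sure the reduction to the free case genuinely preserves density of the good locus of points (it does, since removing a proper closed subset from an irreducible scheme keeps every dense subset dense). With that observed, the lemma follows; and as the surrounding text indicates, one then bootstraps from vanishing on the generic fiber to global vanishing of $t$ using the no-embedded-points hypothesis (the generic fiber is scheme-theoretically dense in $X$ when $X$ is irreducible without embedded points and dominates $S$).
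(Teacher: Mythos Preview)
Your proof is correct and essentially identical to the paper's: reduce to the affine case, apply generic freeness over the base to make the relevant module free, and then use that a dense set of primes in a domain has zero intersection to force each coordinate of $t$ in a free basis to vanish. The only cosmetic difference is that you trivialize $\mathcal{L}$ first and apply generic freeness to the coordinate ring $A$, whereas the paper keeps $\mathcal{L}$ as a projective module $M$ and applies generic freeness directly to $M$.
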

\begin{proof}
We can replace $S$ by a dense open $U \subseteq S$ as this preserves the hypothesis and does not change the generic fiber. Thus, we may assume that $S = \Spec A$ is affine. Clearly, by choosing an open affine covering of $X$, we may and do assume that $X = \Spec B$ is affine.

Note that $f$ induces a finite type morphism $\varphi \colon A \to B$ of commutative rings, where $A$ is a noetherian integral domain. We interpret the line bundle $\mathcal{L}$ as a projective $B$-module $M$ and the global section $t$ as an element $m$ of $M$. By generic freeness (\cite[Tag~051R]{stacks-project}), there is a nonzero element $f \in A$ such that $ M \otimes_A A[f^{-1}]$ is a free $A[f^{-1}]$-module. Replacing $A$ by a $A[f^{-1}]$ if necessary (i.e., $S$ by a dense open), we may assume that $M$ is a free $A$-module.

By assumption, there is an index set $I$ and prime ideals $\mathfrak{p}_i\subset A$ with $i\in I$ and $\cap_{i\in I} \mathfrak{p}_i = 0$ such that, for every $i$ in $I$, the element $m$ is in the kernel of $M \to M \otimes_A \kappa(\mathfrak{p}_i)$. However, the kernel of this map is $\mathfrak{p}_iM$. Moreover, the intersection satisfies $\cap_{i\in I} \mathfrak{p}_i M  = (\cap_{i\in I}\mathfrak{p}_i) M=0$ by freeness of $M$ over $A$. This implies that $m=0$, as required.
\end{proof}

Generic vanishing implies global vanishing, assuming in addition that the total space has no embedded points and that the morphism is dominant (but not necessarily of finite type).

\begin{lemma} [Global vanishing]\label{lemma:global_vanishing}
Let $f \colon X \to S$ be a dominant morphism of noetherian schemes. Assume that $S$ is integral and that $X$ is irreducible without embedded points. Let $\mathcal{L}$ be a line bundle on $X$ and $t \in \mathcal{L}(X)$. If the restriction of $t$ to the generic fiber $X_\eta$ is zero, then $t=0$.
\end{lemma}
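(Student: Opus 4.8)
The plan is to reduce to a statement about commutative rings and then exploit a single feature of the hypotheses: a noetherian ring $B$ whose spectrum is irreducible and has no embedded points has the property that its zerodivisors are exactly its nilpotent elements. Indeed, the zerodivisors of a noetherian ring form the union of its associated primes; having no embedded points forces every associated prime to be minimal, hence to coincide with the unique minimal prime; and that minimal prime is the nilradical because the spectrum is irreducible.

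First I would pass to affine charts. Let $\eta$ be the generic point of $S$ and $\xi$ that of $X$. Since $f$ is dominant and $X$ is irreducible, $\overline{\{f(\xi)\}}=\overline{f(X)}=S$, and as $S$ is integral this forces $f(\xi)=\eta$. Cover $X$ by affine opens $U=\Spec B$ on which $\mathcal{L}$ is trivial and with $f(U)$ contained in some affine open $\Spec A\subseteq S$; since $t=0$ can be checked on an affine open cover, it suffices to show $t|_U=0$ for each such $U$. Fix one. Then $A$ is a noetherian domain with fraction field $K=\kappa(\eta)$, and $B$ is noetherian with $\Spec B$ irreducible (a nonempty open of $X$) and without embedded points; its unique minimal prime $\mathfrak q$ corresponds to the point $\xi\in U$, equals the nilradical of $B$, and, writing $\varphi\colon A\to B$ for the structure map, satisfies $\varphi^{-1}(\mathfrak q)=(0)$ because $f(\xi)=\eta$ corresponds to the zero ideal of $A$. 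Finally, with $b:=t|_U\in B$, the restriction of $t$ to $U_\eta=\Spec(B\otimes_A K)$ is the image of $b$ under $B\to B\otimes_A K$, and this vanishes because $U_\eta\subseteq X_\eta$ and $t|_{X_\eta}=0$ by hypothesis.

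Now I would conclude. The ring $B\otimes_A K$ is the localisation of $B$ at the multiplicative set $\varphi(A\setminus\{0\})$, so the vanishing of the image of $b$ gives some $a\in A\setminus\{0\}$ with $\varphi(a)\,b=0$ in $B$. Since $a\neq 0$ and $\varphi^{-1}(\mathfrak q)=(0)$, the element $\varphi(a)$ lies outside $\mathfrak q$, hence is not nilpotent; by the dichotomy recalled at the outset, $\varphi(a)$ is therefore a non-zerodivisor of $B$, and $\varphi(a)\,b=0$ forces $b=0$. Thus $t|_U=0$ for every $U$ in the cover, so $t=0$.

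I expect the only point requiring care to be the reduction step: choosing the affine cover of $X$ compatibly with an affine cover of $S$, and checking that dominance of $f$ (together with irreducibility of $X$ and integrality of $S$) survives passage to charts as the clean statement $\varphi^{-1}(\mathfrak q)=(0)$. Once that bookkeeping is in place, the heart of the matter is just the observation that, under the stated hypotheses on $X$, a non-nilpotent element of the structure ring is automatically a non-zerodivisor.
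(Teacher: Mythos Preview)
Your proof is correct and follows essentially the same approach as the paper's: both reduce to the affine situation and use that, for a noetherian ring $B$ with irreducible spectrum and no embedded primes, the zerodivisors coincide with the nilpotents, combined with the fact (from dominance) that nonzero elements of $A$ map to non-nilpotent elements of $B$. Your write-up is somewhat more explicit about the reduction step and phrases the dominance input as $\varphi^{-1}(\mathfrak q)=(0)$ rather than $A\hookrightarrow B$, but the core argument is identical.
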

\begin{proof}
The vanishing of a section can be tested locally, so that we may assume that $X$ and $S$ are affine and that $\mathcal{L}$ is trivial. In this case, the lemma reduces to the following statement:

Let $A \subseteq B$ be an inclusion of noetherian rings. Assume that $A$ is an integral domain and that $B$ has a unique minimal prime ideal and no embedded primes. Then the map $B \to (A \setminus \{0\})^{-1} B$ is injective. 

To prove this statement, let $b \in B$ be in the kernel of $B \to (A \setminus \{0\})^{-1} B$. Then there exists a nonzero $a \in A$ such that $ab=0$. Assume that $b \neq 0$. Then $a$ is a zerodivisor in $B$. However, since $B$ has no embedded primes, every zerodivisor in $B$ is nilpotent, so that $a$ is a nonzero nilpotent element. This contradicts the assumption that $A$ is an integral domain. Thus, we conclude that $b=0$, as required.   
\end{proof}

\begin{proof}[Proof of Proposition \ref{proposition:vanishing_of_t}]
Combine Lemma \ref{lemma:generic_vanishing} and Lemma \ref{lemma:global_vanishing}.
\end{proof}

\begin{remark}
The assumption on embedded points is necessary in Proposition \ref{proposition:vanishing_of_t} and Lemma \ref{lemma:global_vanishing}. Indeed, consider $X=\Spec k[x,y]/(xy,y^2)$, $S=\Spec k[x]$, and the finite surjective map $f \colon \Spec k[x,y]/(xy,y^2) \to \Spec k[x]$. Note that the nonzero element $y \in k[x,y]/(xy,y^2)$ (regarded as a section of $\mathcal{O}_X$) vanishes after tensoring with $k(x)$, i.e., it vanishes generically without vanishing globally. The problem in this situation is that the nilpotent $y$ is killed by the non-nilpotent $x$ (so that $x$ is a non-nilpotent zerodivisor in $\mathcal{O}(X)$). 
\end{remark}

\subsection{Families of orbifold maps}

\begin{theorem}\label{thm:families_of_orbifold_maps}
Let $k$ be an algebraically closed field of characteristic zero.
Let $\pi \colon \mathcal{X} \to S$ be a dominant morphism of smooth varieties over $k$ with geometrically integral fibers.
Let $(Y, \Delta_Y)$ be a locally factorial orbifold pair and let $f \colon \mathcal{X} \to Y$ be a morphism of varieties.
Assume that for every $s \in S(k)$, the image of the restriction $f_s \colon \mathcal{X}_s \to Y$ is not contained in the support of $\Delta_Y$.
Furthermore, assume that there is a dense set of points $s$ in $S(k)$ such that $f_s \colon \mathcal{X}_s \to (Y,\Delta_Y)$ is an orbifold morphism.
Then, $f \colon \mathcal{X} \to (Y, \Delta_Y)$ is an orbifold morphism and the fiberwise morphisms $f_s \colon \mathcal{X}_s \to (Y,\Delta_Y)$ are orbifold for all $s \in S(k)$.
\end{theorem}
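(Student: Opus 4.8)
The plan is to first observe that the statement is geometric: whether a morphism of orbifolds is orbifold is tested by the orders of vanishing of pullbacks of local equations of the components of $\Delta_Y$ along prime divisors, and these are insensitive to extension of the base field; so I would reduce to $k=\overline{k}$, after which $k$-points are dense in every $k$-variety. Next, it suffices to prove that $f\colon\mathcal X\to(Y,\Delta_Y)$ is an orbifold morphism. Indeed, each prime divisor $D_i$ of $\supp f^\ast E$ satisfies $f(D_i)\subseteq E$, hence cannot contain a fibre $\mathcal X_s$ whose image avoids $E$; thus for such $s$ the pullback $f_s^\ast E=(f^\ast E)|_{\mathcal X_s}$ makes sense, and for any prime divisor $D'$ of $\mathcal X_s$ with $D'\subseteq\supp f_s^\ast E$, choosing a component $D_i$ with $D'\subseteq D_i\cap\mathcal X_s$ gives $\ord_{D'}(f_s^\ast E)\ge\ord_{D_i}(f^\ast E)\ge m(E)$ — so $f_s$ is orbifold, using the hypothesis on images for every $s\in S(k)$. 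So I fix a prime divisor $E\subseteq\supp\Delta_Y$, write $m:=m(E)$, let $s_E$ be the tautological section of $\mathcal O_Y(E)$ (so $f^\ast E$ is a well-defined effective Cartier divisor on $\mathcal X$, as $f(\mathcal X)\not\subseteq E$), and must show $\ord_D(f^\ast E)\ge m$ for every prime divisor $D\subseteq\supp f^\ast E$.

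The first real step is to show every such $D$ dominates $S$. Suppose not; put $T:=\overline{\pi(D)}\subsetneq S$ with generic point $\tau$. Since $D\subseteq\pi^{-1}(T)\subsetneq\mathcal X$ and $\dim D=\dim\mathcal X-1$, we get $\dim\pi^{-1}(T)=\dim\mathcal X-1$, so $D$ is a component of $\pi^{-1}(T)$; being one that dominates $T$, it is a component of $\overline{\mathcal X_\tau}$ (the union of the components of $\pi^{-1}(T)$ dominating $T$), whence $\dim\mathcal X_\tau=\dim\overline{\mathcal X_\tau}-\dim T=\dim\mathcal X-1-\dim T=\dim(D\cap\mathcal X_\tau)$. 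As $D\cap\mathcal X_\tau$ is a closed subset of full dimension in the irreducible scheme $\mathcal X_\tau$, it equals $\mathcal X_\tau$, so $\mathcal X_\tau\subseteq D$ and $f(\mathcal X_\tau)\subseteq f(D)\subseteq E$. Spreading out, $f(\mathcal X_t)\subseteq E\subseteq\supp\Delta_Y$ for all $t$ in a dense open of $T$, and (as $k=\overline{k}$) such a $t$ may be taken in $S(k)$, contradicting the hypothesis that $f_t(\mathcal X_t)\not\subseteq\supp\Delta_Y$.

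Now fix a prime divisor $D\subseteq\supp f^\ast E$, which dominates $S$ by the previous step. Since $\mathcal X$ is regular, $\mathcal I_D$ is invertible, so $Z:=V(\mathcal I_D^{\,m})$ is a locally principal, Cohen–Macaulay closed subscheme with support $D$; in particular $Z$ is irreducible with no embedded points, and $Z\hookrightarrow\mathcal X\xrightarrow{\pi}S$ is finite type and dominant. Now $\ord_D(f^\ast E)\ge m$ is equivalent to the vanishing of the section $t_0:=f^\ast s_E|_Z$ of $(f^\ast\mathcal O_Y(E))|_Z$. By generic smoothness, generic flatness and generic geometric reducedness in characteristic zero there is a dense open $U\subseteq S$ such that for $s\in U(k)$ the fibre $\mathcal X_s$ is smooth and $D\cap\mathcal X_s$ is a reduced divisor on $\mathcal X_s$, pure of codimension one, so that $Z_s=V(\mathcal I_{D\cap\mathcal X_s}^{\,m})$. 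For $s\in U(k)$ lying in the given dense set of points with $f_s$ orbifold: every component $D'$ of $D\cap\mathcal X_s$ satisfies $D'\subseteq\supp f_s^\ast E$ (since $f(D')\subseteq E$), so $\ord_{D'}(f_s^\ast E)\ge m$, and since $\mathcal X_s$ is locally factorial this forces $f_s^\ast s_E\in\mathcal I_{D\cap\mathcal X_s}^{\,m}$, i.e. $t_0|_{Z_s}=0$. The set of such $s$ is dense in $S$, so Proposition \ref{proposition:vanishing_of_t} applied to $Z\to S$ yields $t_0=0$, i.e. $\ord_D(f^\ast E)\ge m$. Letting $D$ and $E$ vary, $f$ is orbifold, and the fibrewise assertions follow from the first paragraph.

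The main obstacle will be the second step: controlling the prime divisors of $\supp f^\ast E$ that fail to dominate $S$. This is exactly where one uses that $f_s(\mathcal X_s)$ avoids $\supp\Delta_Y$ for \emph{every} $s\in S(k)$ — applied after spreading out over the non-closed point $\tau$ — and the dimension count identifying such a $D$ with a component of the closure of the generic fibre of $\pi$ over $\pi(D)$ is the heart of it. A secondary point is verifying that the infinitesimal neighbourhood $Z$ restricts correctly to the fibres, for which generic geometric reducedness of $D\cap\mathcal X_s$ is used, so that $Z_s$ is genuinely the $m$-th infinitesimal neighbourhood of the reduced divisor $D\cap\mathcal X_s$ and the orbifold property of $f_s$ translates into the vanishing of $t_0$ on $Z_s$.
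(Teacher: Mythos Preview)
Your argument follows essentially the same route as the paper's: show every prime component $D$ of $f^\ast E$ dominates $S$, pass to the $m$-th infinitesimal thickening of $D$, use generic reducedness of $D_s$ to identify the fibre of that thickening with the $m$-th neighbourhood of $D_s$ in $\mathcal X_s$, and then invoke Proposition~\ref{proposition:vanishing_of_t}. The paper argues the dominance step slightly differently (it exhibits a closed point $s$ with $\mathcal X_s\subseteq D$ rather than spreading out from the generic point of $T$), and it deduces the fibrewise statement by composing with the closed immersion $\mathcal X_s\hookrightarrow\mathcal X$ rather than by your direct multiplicity estimate, but these are cosmetic differences.

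There is one genuine omission: you do not treat the case $m(E)=\infty$. Your construction $Z=V(\mathcal I_D^{\,m})$ and the inequality $\ord_D(f^\ast E)\ge m$ make no sense for $m=\infty$; in that case the orbifold condition demands that $f^\ast E$ have \emph{no} prime components at all. The paper handles this separately: once you know any such $D$ dominates $S$, the intersection $D\cap\mathcal X_s$ is a nonempty divisor for generic $s$, contradicting the orbifold condition on the dense set of fibres (which forces $f_s^{-1}(E)$ to be empty in codimension one). You should insert this short argument before passing to the infinitesimal neighbourhood.
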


\begin{proof}
First, note that by assumption the image of $f$ is not contained in $\supp(\Delta_Y)$. Thus, we only need to check that the multiplicities of the preimage divisors are correct. For this, let $E \subseteq \supp(\Delta_Y)$ be a prime divisor and let $m$ be the multiplicity of $E$ in $\Delta_Y$. Let $D$ be an irreducible component of the pullback divisor $f^*E$.

Observe that $\pi|_D \colon D \to S$ is dominant. Otherwise, the generic point $\eta$ of $D$ maps to a non-generic point $\pi(\eta) \in S$. As $\pi^{-1}(\pi(\eta))$ is irreducible by assumption, and since we have $D \subseteq \overline{\pi^{-1}(\pi(\eta))}$ but $\overline{\pi^{-1}(\pi(\eta))} \neq \mathcal{X}$ by dominance of $\pi$, we have $D = \overline{\pi^{-1}(\pi(\eta))}$. This implies that there is a closed point $s \in S(k)$ which is a specialization of $\pi(\eta)$ such that $D$ contains the fiber $\mathcal{X}_s$. But this is in contradiction to the assumption that no fiber of $\pi$ is mapped into the support of $\Delta_Y$.

Let $\mathcal{L}$ be the line bundle on $Y$ which corresponds to the divisor class $E$. Let $t \in \mathcal{L}(Y)$ be the global section whose vanishing divisor equals $E$. We pull back $t$ and $\mathcal{L}$ along $f$ and obtain a line bundle $f^*\mathcal{L}$ on $\mathcal{X}$ with a global section $f^*t$. This global section vanishes along $D$. We must show that it does so with multiplicity at least $m$.

If $m = \infty$, then we know that $D \cap \mathcal{X}_s$ must be empty for every $s \in S(k)$ such that $f_s \colon \mathcal{X}_s \to (Y, \Delta_Y)$ is an orbifold morphism. This contradicts the observation that $D \to S$ is dominant, showing that $D$ cannot exist, i.e. $f^*E = 0$.

If $m < \infty$, let $D_m$ be the $m$-th infinitesimal neighbourhood of $D$ in $\mathcal{X}$, i.e., the closed subscheme cut out by the ideal sheaf $\mathcal{I}^m$, where $\mathcal{I} \subseteq \mathcal{O}_{\mathcal{X}}$ denotes the ideal sheaf of $D$.
Note that $D_m$ is an irreducible closed subscheme of $\mathcal{X}$.
As $\mathcal{X}$ is locally factorial, the divisor $D \subseteq \mathcal{X}$ is locally principal.
Hence the scheme $D_m$ is locally cut out by a single equation and thus, has no embedded points \cite[Tag~031T]{stacks-project}.
By what we observed before, the projection $\pi|_{D_m} \colon D_m \to S$ is dominant.

We claim that $f^\ast t \in f^\ast \mathcal{L}(X)$ vanishes along $D$ with multiplicity at least $m$ if and only if $f^\ast t$ is contained in $\mathcal{I}^m \mathcal{L}$.
This claim is clearly local on $\mathcal{X}$, and thus to prove it, we may assume that $\mathcal{X} = \Spec R$ is affine, that the line bundle $f^* \mathcal{L}$ is trivial and that $\mathcal{I}$ corresponds to a principal ideal $\mathfrak{p} \subseteq R$ (which is automatically prime since $D$ is integral).
The section $f^* t$ vanishes along $D$ with multiplicity at least $m$ if and only if $f^*t \in \mathfrak{p}^m R_\mathfrak{p}$, so that our claim is equivalent to the equality of ideals $R \cap \mathfrak{p}^m R_\mathfrak{p} = \mathfrak{p}^m$.
This equality of ideals holds since $\mathfrak{p}$ is principal; hence the claim is true.
Thus, we must show that $f^*t$ lies in $\mathcal{I}^m \mathcal{L}$ or equivalently, that it vanishes when restricted to $D_m$.

To show that $f^*t$ vanishes when restricted to $D_m$, we first note that if the fiber $D_s := D \times_S \kappa(s)$ is reduced for some $s \in S(k)$, the $m$-th infinitesimal neighborhood of $D \times_S \kappa(s)$ in $\mathcal{X}_s$ is exactly $D_m \times_S \kappa(s)$.
Since the generic fiber of $D \to S$ is geometrically reduced (as we are in characteristic zero), the set of $s \in S(k)$ such that $D_s$ is reduced contains a dense open \cite[Tag~0578]{stacks-project}.

By assumption, there is a dense subset $\Sigma \subseteq S(k)$ such that, for every $s \in \Sigma$, the morphism $f_s \colon \mathcal{X}_s \to (Y,\Delta_Y)$ is orbifold. For these $s$, the section $f^*t$ vanishes in the $m$-th infinitesimal neighborhood of $(D \times_S \kappa(s))_{\mathrm{red}}$. By the previous paragraph, we may assume that for all $s \in \Sigma$, the fiber $D_s$ is reduced. Therefore, for every $s \in \Sigma$, the section $f^*t$ vanishes identically on the fiber of $D_m \to S$ over $s$. It then follows from Proposition \ref{proposition:vanishing_of_t} that $(f^*t)|_{D_m}$ vanishes. This proves that $\mathcal{X} \to (Y,\Delta_Y)$ is an orbifold morphism. 

To show that all fiberwise morphisms are orbifold, let $s \in S(k)$ be a closed point and consider the morphism $\mathcal{X}_s \subset \mathcal{X} \to (Y,\Delta_Y)$. This is a composition of orbifold morphisms. As its image is not contained in $\supp \Delta_Y$ by assumption, it is an orbifold morphism.
\end{proof}

\subsection{The Hom-scheme of orbifold maps}

For $X$ and $Y$ projective schemes over a field $k$ of characteristic zero, we let $\underline{\Hom}_k(X,Y)$ be the scheme representing the functor 
\[
\mathrm{Sch}/k^{op}\to \mathrm{Sets}, \quad S \mapsto \Hom_S(X_S,Y_S).
\]  
Fix ample line bundles on $X$ and $Y$, and  hence on $X\times Y$. Then, for every polynomial $P \in \mathbb{Q}[t]$, let $\underline{\Hom}_k^P(X,Y)$ be the subscheme of $\underline{\Hom}_k(X,Y)$ parametrizing morphisms $f \colon X \to Y$ with Hilbert polynomial $P$ (with respect to the fixed ample line bundle on $X\times Y$). Note that $\underline{\Hom}_k^P(X,Y)$ is a quasi-projective scheme over $k$ and that $\underline{\Hom}_k(X,Y) = \sqcup_{P \in \mathbb{Q}[t]} \underline{\Hom}_k^P(X,Y)$.

Let $\Delta_Y$ be an orbifold divisor on $Y$. Let $\Hom(X,(Y,\Delta_Y))$ be the subset of $\Hom_k(X,Y)$ given by orbifold morphisms $X \to (Y,\Delta_Y)$. Note that $\Hom(X,(Y,\Delta_Y)) \subseteq \Hom_k(X,Y) \setminus \Hom_k(X, \supp \Delta_Y)$.  

\begin{lemma}\label{closed_hom}
Let $S$ be a noetherian scheme. Let $X \to S$ be a flat projective morphism, and let $Y \to S$ be a quasi-projective morphism. Let $Z \subseteq X$ be a closed subscheme, flat over $S$. Then there is a natural closed immersion $\underline{\Hom}_S(Y,Z) \to \underline{\Hom}_S(Y,X)$.
\end{lemma}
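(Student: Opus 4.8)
The plan is to realise $\Hom_S(Y,Z)$ as a closed subfunctor of $\Hom_S(Y,X)$ and then invoke the valuative criterion. The formal starting point is that $\Hom_S(Y,Z)$ represents the subfunctor of $\Hom_S(Y,X)$ whose $T$-points are those morphisms $g\colon Y_T\to X_T$ factoring through the closed subscheme $Z_T\subseteq X_T$; such a factorisation is automatically unique since $Z_T\to X_T$ is a monomorphism, so the natural map $\Hom_S(Y,Z)\to\Hom_S(Y,X)$ is at least a monomorphism of $S$-schemes. To promote this to a closed immersion I would first split both sides into the subschemes $\Hom_S^P(Y,Z)$ and $\Hom_S^P(Y,X)$ indexed by Hilbert polynomial $P$, using an ample line bundle on $Y\times_S X$ and its restriction to $Y\times_S Z$, so that postcomposition with $Z\hookrightarrow X$ preserves Hilbert polynomials. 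Since being a closed immersion can be checked on an open cover of the target, this reduces the statement to showing that each $\Hom_S^P(Y,Z)\to\Hom_S^P(Y,X)$ is a closed immersion, and these are morphisms of finite type between quasi-projective (hence noetherian) $S$-schemes. A monomorphism is separated, so it suffices to show such a morphism is universally closed; for a quasi-compact morphism this can be checked via the existence part of the valuative criterion of properness, after which one concludes because a proper monomorphism is a closed immersion.

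The substance is the valuative criterion. So let $R$ be a discrete valuation ring with fraction field $K$, let $g_R\colon Y_R\to X_R$ be an $S$-morphism (with $Y_R=Y\times_S\Spec R$, and $X_R$, $Z_R$ defined likewise), and suppose the generic fibre $g_K\colon Y_K\to X_K$ factors through $Z_K\subseteq X_K$; I must show $g_R$ factors through $Z_R\subseteq X_R$, which then provides the required (unique) diagonal. I would argue via the scheme-theoretic preimage $W:=g_R^{-1}(Z_R)$, a closed subscheme of $Y_R$. By the hypothesis on the generic fibre, $W\cap Y_K=Y_K$ inside the open subscheme $Y_K\subseteq Y_R$, so the ideal sheaf $\mathcal I_W\subseteq\mathcal O_{Y_R}$ vanishes on $Y_K$ and is thus set-theoretically supported on the closed fibre of $Y_R\to\Spec R$. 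Since $Y_R\to\Spec R$ is flat — being a base change of $Y\to S$, which is flat — a uniformiser of $R$ acts injectively on $\mathcal O_{Y_R}$, hence on the coherent subsheaf $\mathcal I_W$. A coherent subsheaf of $\mathcal O_{Y_R}$ supported on the closed fibre is, however, annihilated by a power of the uniformiser, so $\mathcal I_W=0$, i.e.\ $W=Y_R$; thus $g_R$ factors through $Z_R$. Letting $P$ range over all Hilbert polynomials gives the desired closed immersion $\Hom_S(Y,Z)\to\Hom_S(Y,X)$.

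I expect this schematic-density step to be the only real obstacle — representability of the Hom-schemes, the decomposition by Hilbert polynomial, and the characterisation of closed immersions as universally closed monomorphisms of finite type are all standard — and it is precisely where the flatness of the source over $S$ is used, since that flatness is what makes the generic fibre of $Y_R$ schematically dense in $Y_R$. One mild subtlety: $Y_R$ may be nonreduced, so this density has to be expressed through $R$-flatness (equivalently, torsion-freeness) of $\mathcal O_{Y_R}$ rather than through underlying reduced schemes. As an alternative to the valuative criterion one could instead embed $\Hom_S(Y,X)$ and $\Hom_S(Y,Z)$ as the open loci of graphs inside $\mathrm{Hilb}_{(Y\times_S X)/S}$ and $\mathrm{Hilb}_{(Y\times_S Z)/S}$, invoke the standard fact that the Hilbert scheme of the closed subscheme $Y\times_S Z\subseteq Y\times_S X$ is a closed subscheme of $\mathrm{Hilb}_{(Y\times_S X)/S}$, and note that $\Hom_S(Y,Z)$ is exactly the preimage of that closed subscheme, so that the result follows by base change.
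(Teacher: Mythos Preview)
Your alternative approach at the end is exactly what the paper does: it identifies $\Hom_S(Y,Z)$ with the fibre product $\Hom_S(Y,X)\times_{\mathrm{Hilb}_S(Y\times_S X)}\mathrm{Hilb}_S(Y\times_S Z)$ via the graph embedding, and then cites the Stacks Project (Tag~0DPF) for the fact that $\mathrm{Hilb}_S(Y\times_S Z)\to\mathrm{Hilb}_S(Y\times_S X)$ is a closed immersion, so that the result follows by base change. Thus your second route and the paper's proof coincide.

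Your primary route via the valuative criterion is a genuinely different and valid argument. Reducing to ``proper monomorphism of finite type $\Rightarrow$ closed immersion'' and checking universal closedness on DVRs is pleasant in that it avoids any appeal to Hilbert schemes beyond the bare existence of the Hom-schemes, and it isolates exactly where flatness of the \emph{source} is used (schematic density of $Y_K$ in $Y_R$, i.e.\ torsion-freeness of $\mathcal O_{Y_R}$ over $R$). The paper's route buries that step inside the cited Stacks tag. One caveat: you assert that ``$Y\to S$ is flat'', but the lemma as stated declares $X\to S$ flat projective and $Y\to S$ merely quasi-projective; the flatness hypotheses are on $X$ and $Z$, not on $Y$. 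This appears to be a slip in the paper's hypotheses rather than in your reasoning---both your argument and the paper's need $Y\to S$ flat projective for the Hom-schemes to be representable and for the graph of $Y_T\to X_T$ to give a point of $\mathrm{Hilb}_S(Y\times_S X)$---but you should flag the discrepancy rather than silently assume it.
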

\begin{proof}
See \cite[Variant 4.c]{FGA4} for the existence of the representing schemes mentioned in this proof. Let $T$ be any $S$-scheme. Then an $S$-morphism $Y \times_S T \to X$ factors over $Z$ if and only if its graph $\Gamma \subseteq Y \times_S T \times_S X$ is contained in $Y \times_S T \times_S Z$. This shows that $\underline{\Hom}_S(Y,Z) = \underline{\Hom}_S(Y,X) \times_{\mathrm{Hilb}_S(Y \times_S X)} \mathrm{Hilb}_S(Y \times_S Z)$. Thus, it suffices to show that $\mathrm{Hilb}_S(Y \times_S Z) \to \mathrm{Hilb}_S(Y \times_S X)$ is a closed immersion. This is shown in \cite[Tag~0DPF]{stacks-project}.
\end{proof}

\begin{lemma}\label{lemma:immer_plus_orb_is_orb}
Let $f \colon Z \to X$ be an immersion of normal varieties over a field $k$ of characteristic zero, let $(Y,\Delta)$ be a locally factorial orbifold, and let $g \colon X \to (Y,\Delta_Y)$ be an orbifold morphism. If the image of the composed map $g \circ f$ is not contained in $\supp \Delta_Y$, then $g \circ f$ is an orbifold morphism $Z \to (Y,\Delta_Y)$.
\end{lemma}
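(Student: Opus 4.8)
The statement is essentially local on $X$ and on $Y$, so the plan is to reduce to a computation of multiplicities of pullback divisors. Let me recall what must be checked: for every prime divisor $E \subseteq \supp \Delta_Y$ with multiplicity $m(E)$, and every prime divisor $D' \subseteq \supp (g\circ f)^* E$ with coefficient $t'$ in $(g\circ f)^* E$, one needs $t' \cdot m(D') \geq m(E)$. Since $f\colon Z \to X$ is an immersion, $(g\circ f)^* E = f^*(g^* E)$, so the first step is to write $g^* E = \sum_i t_i D_i$ (a divisor on $X$, well-defined since $E$ is Cartier by local factoriality of $Y$), where by hypothesis $t_i m(D_i) \geq m(E)$ for each $i$.

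**Main step.** Fix a prime divisor $D'$ on $Z$ appearing in $f^*(g^* E)$. The key observation is that the generic point $\xi$ of $D'$ maps under $f$ to a point of $X$ lying on at least one of the $D_i$ — indeed $f^*(g^* E)$ is supported on $f^{-1}(\supp g^* E) = \bigcup_i f^{-1}(D_i)$, so $\xi \in f^{-1}(D_i)$ for some $i$. Pick such an $i$; I claim $t' \geq t_i$ and $m(D') \geq m(D_i)$, whence $t' m(D') \geq t_i m(D_i) \geq m(E)$. Wait — this is too strong to expect in general; the honest route is different. The coefficient $t'$ of $D'$ in $f^* D_i$ (where $f$ is a local embedding near $\xi$) is $\geq 1$ since $f^{-1}(D_i)$ is cut out by the pullback of a local equation of $D_i$; but that alone does not immediately give $t' m(D') \geq m(E)$ since $m(D')$ could be smaller than $m(D_i)$. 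The resolution: the orbifold structures on $Z$ and $X$ here are \emph{trivial} ($\Delta_Z = 0$, $\Delta_X = 0$) — the lemma only concerns $(Y,\Delta_Y)$ as orbifold target, with $Z$ and $X$ plain normal varieties. Hence $m(D') = m(D_i) = \infty$ by convention ($(X,0)$ has all multiplicities $\infty$), so the required inequality $t' m(D') \geq m(E)$ reads $t' \cdot \infty \geq m(E)$, which holds as soon as $t' > 0$, i.e. as soon as $D'$ actually appears in the support — which it does by choice. The only subtlety when $m(E) = \infty$ is that one needs $(g\circ f)^* E$ to have \emph{no} component at all meeting the relevant locus with finite multiplicity; but again since $g$ is an orbifold morphism, $g^* E = 0$ as a divisor when $m(E) = \infty$ (the coefficient inequality $t_i \cdot \infty \geq \infty$ forces $g^*E$ to vanish along $\supp \Delta_Y$ in the sense recorded in the morphism definition — more precisely $g(X) \cap E = \emptyset$ or $g^{-1}(E)$ has the appropriate vanishing), so $f^*(g^*E) = 0$ as well and there is nothing to check.

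**Cleaner formulation.** So the proof runs: (1) by hypothesis $(g\circ f)(Z) \not\subseteq \supp \Delta_Y$, so the definition of orbifold morphism applies; (2) since $f$ is an immersion and $E$ is Cartier, $(g\circ f)^* E = f^*(g^* E)$ as Cartier divisors on $Z$; (3) if $m(E) = \infty$ then $g^* E$ is the zero divisor (as $g$ is orbifold), so its pullback is zero and the condition holds vacuously; (4) if $m(E) < \infty$, then for any prime divisor $D' \subseteq \supp f^*(g^*E)$ with coefficient $t' > 0$, we have $t' \cdot m(D') = t' \cdot \infty \geq m(E)$ since $D'$ carries trivial orbifold multiplicity as $\Delta_Z = 0$. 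Hence $g \circ f$ is an orbifold morphism.

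**Expected obstacle.** The only genuinely delicate point is making precise the claim that $(g\circ f)^* E = f^*(g^* E)$ when $f$ is an immersion rather than an arbitrary morphism — one must know that $g(f(Z))$ is not contained in any component of $\supp \Delta_Y$, which is exactly the hypothesis, so that $g^* E$ pulls back to a genuine (rather than ill-defined) effective divisor on $Z$; this is the reason the hypothesis "$g\circ f$ image not contained in $\supp\Delta_Y$" is included. I expect this compatibility of pullbacks to follow formally from functoriality of Cartier-divisor pullback along the composition $Z \hookrightarrow X \xrightarrow{g} Y$ once one checks that $f^{-1}(\supp g^* E)$ does not contain $Z$, which again is the hypothesis. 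Everything else is bookkeeping with the convention $m = \infty$ for the trivial orbifold divisor.
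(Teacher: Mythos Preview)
Your argument contains a genuine error in step~(4) of the ``cleaner formulation'': you assert that for the trivial orbifold $(Z,0)$ every prime divisor $D'$ has multiplicity $m(D') = \infty$. This is backwards. By the paper's definition $m(\Delta_i) = (1-\nu_i)^{-1}$, so a prime divisor with coefficient $\nu = 0$ in the orbifold divisor has multiplicity $m = 1$, not $\infty$ (multiplicity $\infty$ corresponds to $\nu = 1$, the logarithmic case). Consequently the condition you must verify is $t' \cdot 1 \geq m(E)$, i.e.\ $t' \geq m(E)$, and this is \emph{not} automatic from $t' > 0$. Your step~(4) therefore proves nothing.

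What is actually needed is precisely the claim you stated and then abandoned in your ``main step'': that $t' \geq t_i$ for some $i$ with $f(\xi) \in D_i$, after which $t' \geq t_i \geq m(E)$ (the last inequality because $g$ is orbifold and $m(D_i)=1$). The paper proves this by factoring the immersion as an open immersion followed by a closed one. For an open immersion the coefficients of a Weil divisor are unchanged under restriction. For a closed immersion $Z \hookrightarrow X$ with $Z \not\subset \supp g^*E$, one uses that the restriction map on divisors sends effective divisors to effective divisors: writing $g^*E = t_i D_i + R$ with $R \geq 0$, the restriction to $Z$ is $t_i (D_i|_Z) + R|_Z$ with $R|_Z \geq 0$, so any component $D'$ of $D_i|_Z$ appears in $(g^*E)|_Z$ with coefficient at least $t_i$. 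This is the missing ingredient; once you insert it (in place of the incorrect $m(D')=\infty$ claim), your outline becomes a valid proof, essentially matching the paper's.
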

\begin{proof}
As every immersion can be factored into an open and a closed immersion, it suffices to treat these cases separately.

The case of an open immersion is clear, as the coefficients of a Weil divisor (like $g^* \Delta_Y$) do not change when restricting to a dense open. 

For the case of a closed immersion, let $\mathrm{Div}_Z(X) \subseteq \mathrm{Div}(X)$ denote the subgroup of those divisors on $X$ whose support does not contain $Z$. Then there is a well-defined restriction map $\mathrm{Div}_Z(X) \to \mathrm{Div}(Z)$ sending effective divisors to effective divisors. In particular, this restriction map does not decrease the coefficients of an effective Weil divisor. As $g$ does not factor over $\supp \Delta_Y$, the pullback of every irreducible component of $\Delta_Y$ along $g$ is actually contained in the subgroup $\mathrm{Div}_Z(X)$. Moreover, the pullback along $g \circ f$ (whenever it is defined) is the pullback along $g$ followed by the restriction map $\mathrm{Div}_Z(X) \to \mathrm{Div}(X)$. Combining these statements shows that the pullback of every irreducible component of $\supp \Delta_Y$ along $g \circ f$ has sufficiently high multiplicity, as desired. 
\end{proof}

\begin{lemma}\label{lemma:closure_preimage_fieldextension}
Let $X$ be a scheme over a field $k$ and let $S \subseteq X$ be a subset with closure $Z \subseteq X$. Let $k \subseteq l$ be a field extension and let $S_l$ be the preimage of $S$ under the map $X_l \to X$. Then the base change $Z_l$ is the closure of $S_l$ in $X_l$.
\end{lemma}
\begin{proof}
As the map $\Spec l \to \Spec k$ is universally open \cite[Tag~0383]{stacks-project}, the map $X_l \to X$ is open. Now recall that for an open continuous map of topological spaces, taking closures commutes with taking preimages. Hence the closure of $S_l$ in $X_l$ is the preimage of $Z$ under $X_l \to X$. Lastly, note that the underlying set of the schematic base change $Z_l$ agrees with the set-theoretic preimage of $Z$ in $X_l$, as $Z \subseteq X$ is closed.
\end{proof}

\begin{theorem} \label{thm:algebraicity_of_moduli}
Let $k$ be a field of characteristic zero. Let $X$ be a normal quasi-projective variety over $k$ and let $X \subseteq \overline{X}$ be a normal projective compactification with divisorial boundary $\Delta_X$. Let $(Y, \Delta_Y)$ be a locally factorial orbifold. Then there is a closed subscheme  
\[ H \subseteq \underline{\Hom}_k(\overline{X},Y) \setminus \underline{\Hom}_k(\overline{X},\supp \Delta_Y) \]
satisfying the following properties:
\begin{itemize}
\item Formation of $H$ commutes with algebraic extensions of the base field $k$.
\item For every algebraic field extension $k \subseteq l$, the set $H(l)$ is the set of orbifold morphisms $(\overline{X}, \Delta_X)_l \to (Y, \Delta_Y)_l$.
\item For every normal variety $T$ over $k$, the set $H(T)$ is the set of orbifold morphisms $(\overline{X} \times T, \Delta_X \times T) \to (Y, \Delta_Y)$ such that, for every closed point $t \in T$, the induced morphism of varieties $\overline{X} \times \{t\} \to Y$ does not factor over $\supp \Delta_Y$.
\item For every irreducible component of $H$ with normalization $H'$, the evaluation morphism $\ev \colon X \times H' \to (Y, \Delta_Y)$ is an orbifold morphism.
\end{itemize}
\end{theorem}
\begin{proof}
Consider the set
\[ S := \left\{ x \in \underline{\Hom}_k(\overline{X},Y)~\bigg{|} ~\begin{aligned}x~ \text{is a closed point and represents an} \\ \text{orbifold morphism}~(\overline{X}, \Delta_X)_{\overline{k}} \to (Y, \Delta_Y)_{\overline{k}} \end{aligned} \right\} \]
and let $H$ be its closure in $\underline{\Hom}_k(\overline{X},Y) \setminus \underline{\Hom}_k(\overline{X},\supp \Delta_Y)$. Note that the latter object is a scheme by Lemma \ref{closed_hom}. We endow $H$ with the reduced scheme structure. Let $k \subseteq l$ be an algebraic field extension and consider the subset
\[ S' := \left\{ x \in \underline{\Hom}_l(\overline{X}_l,Y_l)~\bigg{|}~\begin{aligned} x~\text{is a closed point and represents an} \\ \text{orbifold morphism}~(\overline{X}, \Delta_X)_{\overline{k}} \to (Y, \Delta_Y)_{\overline{k}}  \end{aligned} \right\}. \]
Observe that $S'$ is the preimage of $S$ under the natural map $\underline{\Hom}_l(X_l, Y_l) \to \underline{\Hom}_k(X,Y)$ and recall that forming the $\Hom$-scheme commutes with arbitrary extension of the base field. Thus, by Lemma \ref{lemma:closure_preimage_fieldextension}, the base change $H_l$ is the closure of $S'$ in $\underline{\Hom}_l(\overline{X}_l,Y_l) \setminus \underline{\Hom}_l(\overline{X}_l,(\supp \Delta_Y)_l)$. This shows that our construction of $H$ commutes with extending the base field from $k$ to $l$, thus proving the first bullet point.

If $\overline{X} \to Y$ is a morphism of varieties defined over $k$, then the base change $\overline{X}_{\overline{k}} \to Y_{\overline{k}}$ is an orbifold morphism $(\overline{X}, \Delta_X)_{\overline{k}} \to (Y, \Delta_Y)_{\overline{k}}$ if and only if the original map is an orbifold morphism $(\overline{X}, \Delta_X) \to (Y, \Delta_Y)$ over $k$. Thus, to prove the remaining statements, we may assume that $k$ is algebraically closed.

We next show that the evaluation morphism is an orbifold morphism. For this, let $H'$ be the normalization of an irreducible component of $H$. Let $X^o \subseteq X$ and $H'^o \subseteq H'$ denote the smooth loci, which are big opens by the normality of $X$ and $H'$. The variety $X^o \times H'^o$ is then smooth and the projection morphism $X^o \times H'^o \to H'^o$ has geometrically integral fibers. Moreover, by construction, there is a dense set of points $h \in H'^o(k)$ such that the induced morphism $X^o \to (Y, \Delta_Y)$ is an orbifold morphism. Thus, by Theorem \ref{thm:families_of_orbifold_maps}, the map $X^o \times H'^o \to (Y, \Delta_Y)$ is an orbifold morphism. Since the complement of $X^o \times H'^o$ in $X \times H'$ has codimension at least two (by normality of $X$ and $H'$), it follows that $X \times H'$ and $X^o \times H'^o$ have the same Weil divisors and hence that $X \times H' \to (Y, \Delta_Y)$ is orbifold, as desired.

Next, we show that the set $H(k)$ is the set of orbifold morphisms $(\overline{X}, \Delta_X) \to (Y, \Delta_Y)$. By construction, we have that every orbifold morphism $(\overline{X}, \Delta_X) \to (Y, \Delta_Y)$ is an element of $H(k)$. Thus, we have to show that every element of $H(k)$ defines an orbifold morphism $(\overline{X}, \Delta_X) \to (Y, \Delta_Y)$. To do so, first observe that an orbifold morphism $(\overline{X}, \Delta_X) \to (Y, \Delta_Y)$ is a morphism of varieties $\overline{X} \to Y$ such that the induced map $X \to (Y, \Delta_Y)$ is orbifold and vice versa. Consequently, it suffices to show that every element $h$ of $H(k)$ induces an orbifold morphism $X \to (Y, \Delta_Y)$. To do so, let $H'$ denote the normalization of an irreducible component of $H$ containing $h$. Then the map represented by $h$ is the composition of the inclusion $X \times \{h\} \subseteq X \times H'$ and the evaluation map $X \times H' \to (Y, \Delta_Y)$. This composition is an orbifold map by Lemma \ref{lemma:immer_plus_orb_is_orb}.

It remains to show the third bullet point. To do so, let $T$ be a normal variety over $k$. Then an element of $H(T)$ defines a morphism of varieties $\overline{X} \times T \to Y$. Moreover, for every $t$ in $T(k)$, the induced map $X \times \{t\} \to (Y, \Delta_Y)$ is an orbifold morphism by the preceding paragraph. In particular, it does not factor over $\supp \Delta_Y$. Let $X^o$ and $T^o$ denote the smooth loci of $X$ and $T$, which by our normality assumptions are big opens. Then, applying Theorem \ref{thm:families_of_orbifold_maps} to the projection $X^o \times T^o \to T^o$, we see that the map $X^o \times T^o \to (Y, \Delta_Y)$ is an orbifold morphism. Again using that $X^o \times T^o \subseteq X \times T$ is a big open, we see that $X \times T \to (Y, \Delta_Y)$ and hence $(\overline{X} \times T, \Delta_X \times T) \to (Y, \Delta_Y)$ is an orbifold morphism, as well. Conversely, if $(\overline{X} \times T, \Delta_X \times T) \to (Y, \Delta_Y)$ is an orbifold morphism such that for every $t \in T(k)$, the induced morphism $\overline{X} \times \{t\} \to Y$ does not factor over $\supp \Delta_Y$, it follows from Lemma \ref{lemma:immer_plus_orb_is_orb} that then the induced map $X \times \{t\} \to (Y, \Delta_Y)$ is orbifold for every $t \in T(k)$. As the $k$-rational points of $T$ are dense in $T$, this shows that the induced morphism $T \to \underline{\Hom}(\overline{X}, Y) \setminus \underline{\Hom}(\overline{X}, \supp \Delta_Y)$ factors set-theoretically over $H$. As $T$ is reduced, we get a morphism of schemes $T \to H$, i.e. an element of $H(T)$. This finishes the proof.
\end{proof}

We note that the fact that the subset of orbifold maps in the moduli space of all maps is locally closed was also proven by Kebekus--Pereira--Smeets; see \cite[Claim~10.1]{Smeets}.

In the remainder of the paper, we will write $\underline{\Hom}((\overline{X}, \Delta_X), (Y, \Delta_Y))$ for the scheme $H$ constructed in the above theorem. In the special case that $X = C$ is a smooth quasi-projective curve, we will also slightly abuse notation and write $\underline{\Hom}(C, (Y, \Delta_Y))$ instead. We explicitly note the following corollary.

\begin{corollary} \label{cor:hom_curve_to_orbifold}
Let $(X, \Delta_X)$ be a projective locally factorial orbifold and let $C$ be a smooth quasi-projective curve with smooth compactification $C \subseteq \overline{C}$. Then the scheme $\underline{\Hom}(C, (X, \Delta_X))$ is closed in $$\underline{\Hom}(\overline{C}, X) \setminus \underline{\Hom}(\overline{C}, \supp \Delta_X)$$ and for every irreducible component of $\underline{\Hom}(C, (X, \Delta_X))$ (endowed with its reduced subscheme structure) with normalization $H'$, the evaluation map $\ev \colon C \times H' \to (X, \Delta_X)$ is an orbifold morphism.
\end{corollary}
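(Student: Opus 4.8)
The plan is to obtain the statement as the special case of Theorem \ref{thm:algebraicity_of_moduli} in which the normal quasi-projective source variety is $C$, its normal projective compactification is $\overline{C}$, and the locally factorial target orbifold is $(X,\Delta_X)$. First I would check the hypotheses: $\overline{C}$ is smooth projective, hence a normal projective compactification of $C$, and its divisorial boundary $\Delta_{\overline{C}} := (\overline{C}\setminus C)_{\mathrm{red}}$ is a finite set of closed points, which is a divisor because $\dim\overline{C}=1$ (and it is automatically strict normal crossings); the orbifold $(X,\Delta_X)$ is locally factorial by hypothesis and $X$ is projective, so the Hom-scheme $\underline{\Hom}_k(\overline{C},X)$ is available.

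The only step requiring a genuine (if short) argument, and the one I would flag as the main point, is the identification
\[
\Hom(C,(X,\Delta_X)) \;=\; \Hom\bigl((\overline{C},\Delta_{\overline{C}}),(X,\Delta_X)\bigr)
\]
as subsets of $\underline{\Hom}_k(\overline{C},X)(k)$. Given $f\colon \overline{C}\to X$ with $f(\overline{C})\not\subseteq\supp\Delta_X$ (equivalently $f(C)\not\subseteq\supp\Delta_X$, since $C$ is dense and $\supp\Delta_X$ closed), the orbifold condition for $(\overline{C},\Delta_{\overline{C}})\to(X,\Delta_X)$ asks that $t\,m(D)\geq m(E)$ for every prime divisor $E\subseteq\supp\Delta_X$ and every prime divisor $D\subseteq\supp f^*E$, where $t$ is the coefficient of $D$ in $f^*E$. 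If $D$ is a boundary point then $m(D)=\infty$ and the inequality is vacuous; if $D\not\subseteq\Delta_{\overline{C}}$ then $m(D)=1$, and $D$ corresponds, via $C\subseteq\overline{C}$, to a prime divisor on $C$ with the same coefficient $t$ in $(f|_C)^*E$, so $t\geq m(E)$ is precisely the condition for $f|_C\colon C\to(X,\Delta_X)$ to be an orbifold morphism. Since every prime divisor of $(f|_C)^*E$ arises this way, the two conditions are equivalent, which gives the displayed equality.

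With this in hand, Theorem \ref{thm:algebraicity_of_moduli} applies directly and yields that $\underline{\Hom}_k(\overline{C},\supp\Delta_X)$ is a closed subscheme of $\underline{\Hom}_k(\overline{C},X)$, that $\Hom(C,(X,\Delta_X))$ is closed in $\underline{\Hom}_k(\overline{C},X)(k)\setminus\underline{\Hom}_k(\overline{C},\supp\Delta_X)(k)$, and that for every irreducible component $H$ of $\Hom(C,(X,\Delta_X))$ with normalization $S\to H$ the composed evaluation map $C\times S\to C\times H\to(X,\Delta_X)$ is an orbifold morphism. I do not expect any real obstacle beyond the elementary bookkeeping of orbifold multiplicities in the identification above, together with reading the symbol $\Hom(\overline{C},X)$ in the statement as the Hom-scheme $\underline{\Hom}_k(\overline{C},X)$.
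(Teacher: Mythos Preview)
Your proposal is correct and follows the same approach as the paper: both recognize the corollary as a direct specialization of Theorem \ref{thm:algebraicity_of_moduli}. The paper's proof is a one-line remark that every orbifold morphism $C\to(X,\Delta_X)$ extends to a morphism of varieties $\overline{C}\to X$ (by properness of $X$), whereas you spell out in detail the resulting identification $\Hom(C,(X,\Delta_X))=\Hom((\overline{C},\Delta_{\overline{C}}),(X,\Delta_X))$ by comparing orbifold multiplicities; this extra bookkeeping is sound and makes explicit what the paper leaves implicit.
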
 
\begin{proof}
Noting that every orbifold morphism $C \to (X, \Delta_X)$ extends to a morphism of varieties $\overline{C} \to X$, this is just a special case of Theorem \ref{thm:algebraicity_of_moduli}.
\end{proof}

This immediately implies the following (presumably well-known) result.

\begin{corollary}
Let $\overline{C}$ be a smooth projective curve and let $X$ be a locally factorial projective variety over an algebraically closed field $k$ of characteristic zero. Let $C \subset \overline{C}$ be a dense open and $U \subset X$ be a dense open with complement $D$. Then $\Hom(C,U) \subset \underline{\Hom}(\overline{C},X)(k) \setminus \underline{\Hom}(\overline{C},D)(k)$ is closed.
\end{corollary}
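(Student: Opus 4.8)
The plan is to deduce this immediately from Corollary~\ref{cor:hom_curve_to_orbifold}, applied to $X$ equipped with the orbifold structure in which $D$ carries infinite multiplicity. Concretely, write $D=\sum_i D_i$ for its decomposition into prime divisors and set $\Delta_X:=\sum_i D_i$, the orbifold divisor on $X$ in which each $D_i$ has coefficient $1$, i.e.\ multiplicity $\infty$. Since $X$ is projective and locally factorial, $(X,\Delta_X)$ is a projective locally factorial orbifold with $\supp\Delta_X=D$, and $C\subseteq\overline{C}$ is a smooth quasi-projective curve together with its smooth compactification, so the hypotheses of Corollary~\ref{cor:hom_curve_to_orbifold} are met.

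The one point to verify is the equality of sets $\Hom(C,U)=\Hom(C,(X,\Delta_X))$ inside $\underline{\Hom}_k(\overline{C},X)(k)$. I would use that a morphism $C\to X$ extends uniquely to a morphism $\overline{C}\to X$ (as $\overline{C}$ is a smooth projective curve and $X$ is proper), so that $\Hom(C,X)$ is identified with $\underline{\Hom}_k(\overline{C},X)(k)$ and, under this identification, $\Hom(C,U)$ becomes the set of $f$ with $f(C)\subseteq U$, equivalently $f^{-1}(D)\cap C=\emptyset$. Now if $f\colon C\to X$ is an orbifold morphism to $(X,\Delta_X)$, then $f(C)\not\subseteq\supp\Delta_X=D$; in particular $f(C)\not\subseteq D_i$ for each $i$, so $D_i$ (which is Cartier by local factoriality) pulls back to a well-defined effective divisor $f^*D_i$ on $C$, and at any point $p$ in its support the orbifold inequality reads $t\cdot m(p)\ge m(D_i)=\infty$ with $m(p)=1$ (as $C$ carries the trivial orbifold structure) and $t\ge 1$, which is absurd. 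Hence $\supp f^*D_i=\emptyset$ for every $i$, i.e.\ $f(C)\cap D=\emptyset$, i.e.\ $f\in\Hom(C,U)$. Conversely, if $f(C)\subseteq U$ then $f(C)\not\subseteq\supp\Delta_X$ and all orbifold inequalities hold vacuously, so $f$ is an orbifold morphism to $(X,\Delta_X)$; this gives the claimed equality.

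Granting this, Corollary~\ref{cor:hom_curve_to_orbifold} says that $\Hom(C,(X,\Delta_X))$ is closed in $\underline{\Hom}_k(\overline{C},X)(k)\setminus\underline{\Hom}_k(\overline{C},\supp\Delta_X)(k)$, and since $\supp\Delta_X=D$ this is exactly the assertion. I do not expect any genuine obstacle here: all the substance is in Corollary~\ref{cor:hom_curve_to_orbifold} (and ultimately in Theorem~\ref{thm:algebraicity_of_moduli}), and what remains is only the elementary dictionary between ``$D$ occurs with infinite multiplicity in $\Delta_X$'' and ``the image of $C$ avoids $D$'', together with the standard fact that a map from $C$ to $X$ extends over the smooth compactification $\overline{C}$.
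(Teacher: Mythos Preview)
Your proposal is correct and follows exactly the approach indicated in the paper, which simply states that the corollary ``immediately'' follows from Corollary~\ref{cor:hom_curve_to_orbifold}; you have supplied the expected dictionary between maps $C\to U$ and orbifold maps $C\to(X,\Delta_X)$ with $\Delta_X=D$ carrying infinite multiplicities. The one implicit assumption you use (and the paper evidently intends) is that $D$ is purely divisorial, since otherwise the orbifold argument only controls the codimension-one part of $D$.
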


\section{The dimension of the moduli space of orbifold maps}\label{section:dimension_of_moduli}
In this section, let $k$ be an algebraically closed field of characteristic zero.
We show that, under suitable assumptions, the moduli space of non-constant orbifold maps from a fixed curve to an orbifold pair of general type $(X,\Delta)$ is at most one-dimensional (see Corollary \ref{corollary:boundedness}). 

Let $(X,\Delta)$ be a smooth proper orbifold. Recall that $(X,\Delta)$ is of general type if $K_X + \Delta$ is a big $\mathbb{Q}$-divisor.  To prove Corollary \ref{corollary:boundedness} we invoke the following recent finiteness result for dominant maps \cite[Theorem~1.1]{BJ}.

\begin{theorem}[Kobayashi--Ochiai for orbifold pairs]\label{thm:ko} 
Let $V$ be a normal integral variety and let $(X,\Delta)$ be a smooth proper orbifold of general type. Then, the set of dominant morphisms $V \to (X,\Delta)$ is finite. 
\end{theorem}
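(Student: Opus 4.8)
The plan is to run the classical Kobayashi--Ochiai strategy---boundedness together with rigidity of dominant maps to a variety of general type---in Campana's orbifold category, with the orbifold condition entering through a single divisorial inequality. First I would reduce to the case that $V$ is smooth and projective and $\dim V=\dim X$. For this, note that replacing $V$ by a smooth projective compactification of a resolution of singularities turns a dominant orbifold morphism $V\to (X,\Delta)$ into an orbifold near-map from that compactification, and distinct morphisms induce distinct near-maps; since everything below only concerns codimension-one behaviour and pluricanonical forms, passing to near-maps is harmless. (If some multiplicity of $\Delta$ were infinite there would be no dominant morphism at all, so we may assume the multiplicities of $\Delta$ are finite; we may also assume $X$ projective.) A standard argument restricting to a general complete-intersection subvariety then reduces us to $\dim V=\dim X$, so that the maps in play are generically finite; the general case $\dim V\geq\dim X$ follows the same pattern.

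The first main step is \emph{boundedness}: the set $\mathcal{H}$ of dominant orbifold near-maps $f\colon V\ratmap (X,\Delta)$ is of finite type. Fix $m$ with $m\Delta$ integral and with $\phi:=\phi_{|m(K_X+\Delta)|}\colon X\ratmap\PP^N$ birational onto its image (possible as $K_X+\Delta$ is big). The crucial observation is that the orbifold condition on $f$ is \emph{exactly} what makes $f^*\bigl(m(K_X+\Delta)\bigr)\leq mK_V$ hold as divisors: writing $K_V=f^*K_X+R_f$ with $R_f$ the effective ramification divisor, and $f^*E_i=\sum_D t_D\, D$ for a component $E_i$ of $\supp\Delta$ of multiplicity $m_i$, the coefficient of a prime $D$ in $mK_V-f^*(m(K_X+\Delta))$ is at least $m\bigl((t_D-1)-(1-\tfrac1{m_i})t_D\bigr)=m(t_D-m_i)/m_i$, which is $\geq 0$ precisely because $f$ is an orbifold morphism. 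Hence $f^*$ injects the fixed vector space $H^0(X,m(K_X+\Delta))$ into the fixed vector space $H^0(V,mK_V)$, and $\phi\circ f$ is the rational map attached to the tuple $(f^*s_0,\dots,f^*s_N)$ for a fixed basis $s_0,\dots,s_N$. Since $\phi$ is birational, $f=\phi^{-1}\circ(\phi\circ f)$ is recovered from that tuple; in particular $\deg f\leq \vol(K_V)/\vol(K_X+\Delta)$ and the graphs $\Gamma_f\subseteq V\times X$ have bounded Hilbert polynomial, so only finitely many components of the Hom-scheme meet $\mathcal{H}$. Since being dominant is open and being an orbifold map is locally closed (Theorem \ref{thm:algebraicity_of_moduli}), this shows $\mathcal{H}$ is a finite-type scheme.

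It remains to prove $\dim\mathcal{H}=0$, i.e.\ \emph{rigidity}. If a component of $\mathcal{H}$ were positive-dimensional, then passing to a curve inside it, normalising, and resolving the indeterminacy of the universal map, we would obtain a smooth projective curve $T$ and a morphism $F\colon V\times T\to X$ (defined on a big open of $V\times T$, which is enough) such that each $f_t:=F|_{V\times\{t\}}$ is a dominant orbifold morphism and, by positive-dimensionality, $t\mapsto f_t(v)$ is non-constant for general $v\in V$. As a dense set of fibres of $V\times T\to T$ give orbifold morphisms, Theorem \ref{thm:families_of_orbifold_maps} shows $F\colon V\times T\to (X,\Delta)$ is an orbifold morphism, and therefore so is $G:=(F,\operatorname{pr}_T)\colon V\times T\to (X\times T,\ \operatorname{pr}_X^*\Delta)$. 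Applying the divisorial inequality above to the dominant generically finite morphism $G$ of smooth projective varieties gives $F^*(K_X+\Delta)+\operatorname{pr}_T^*K_T\leq K_{V\times T}=\operatorname{pr}_V^*K_V+\operatorname{pr}_T^*K_T$, hence $F^*(K_X+\Delta)\leq\operatorname{pr}_V^*K_V$ with effective difference. Intersecting with $\{v\}\times T$ for general $v$, so that this curve avoids the support of that difference, the right-hand side has degree $0$ while the left-hand side has degree $>0$: indeed $K_X+\Delta$ is big and the curves $\gamma_v:=F|_{\{v\}\times T}$ form a covering family of curves of $X$, and a big $\QQ$-divisor is positive on a general member of a covering family of curves. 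This contradiction forces $\dim\mathcal{H}=0$, and a finite-type scheme of dimension $0$ is finite.

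The step I expect to be the main obstacle is boundedness, and within it two points: checking that the inequality $f^*(m(K_X+\Delta))\leq mK_V$ is genuinely governed by the orbifold condition for \emph{all} the relevant multiplicities --- including contributions from ramification over centres of codimension $\geq 2$, which is where the reduction to near-maps and the careful bookkeeping of coefficients matters --- and upgrading the resulting boundedness of pluricanonical tuples to an honest bound on the Hilbert polynomials of the graphs $\Gamma_f$, so as to land in finitely many components of the Hom-scheme. Once boundedness and Theorem \ref{thm:families_of_orbifold_maps} are available, the rigidity argument is a faithful orbifold translation of the classical one and should present no serious difficulty.
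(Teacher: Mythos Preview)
The paper does not prove Theorem~\ref{thm:ko}; it is quoted verbatim from \cite[Theorem~1.1]{BJ} and used as a black box (see the sentence immediately preceding the statement). There is therefore no internal argument to compare your proposal against.

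That said, your outline is a faithful rendition of the Kobayashi--Ochiai strategy transported to the orbifold category, and is in the spirit of what one expects the argument in \cite{BJ} to be: the divisorial inequality $f^*(K_X+\Delta)\leq K_V$, driven precisely by the orbifold condition via your coefficient computation $(t_D-m_i)/m_i\geq 0$, feeds pluricanonical sections of $(X,\Delta)$ into a fixed finite-dimensional space on $V$ and yields boundedness; rigidity then follows by spreading over a curve $T$, applying the same inequality to $G\colon V\times T\to X\times T$, and restricting to a general vertical fibre $\{v\}\times T$ to force a big class to have nonpositive degree on a covering family. Two remarks. First, you invoke Theorems~\ref{thm:families_of_orbifold_maps} and~\ref{thm:algebraicity_of_moduli} from the present paper; this is logically harmless since neither depends on Theorem~\ref{thm:ko}, but \cite{BJ} predates this paper and must supply its own versions of these ingredients. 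Second, the point you yourself flag---controlling $f$-exceptional divisors in the comparison $R_f$ versus $f^*\Delta$---is genuine and is exactly where the careful bookkeeping in \cite{BJ} is needed; your sketch correctly identifies this as the place requiring the most care, but does not actually carry it out.
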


\subsection{Bend-and-break and orbifold Kobayashi--Ochiai}
 Given a smooth quasi-projective curve $C$ and a smooth projective orbifold $(X,\Delta)$,  we let  $\underline{\Hom}_k^{nc}(C,(X,\Delta))$ be the subscheme of $\underline{\Hom}_k(C,(X,\Delta))$ parametrizing non-constant morphisms from $C$ to $(X,\Delta)$.  We will use Theorem \ref{thm:ko} and Mori's bend-and-break to prove the following structure result for such moduli spaces of orbifold maps.
 
\begin{lemma} \label{lemma:ko_app}  
Let $(X,\Delta)$ be a smooth projective orbifold of general type and let $C$ be a smooth quasi-projective curve. Let $H \subset \underline{\Hom}_k^{nc}(C,(X,\Delta))$ be a locally closed subscheme of dimension at least $\dim X$. Assume $H$ is an integral finite type scheme over $k$. Then, the image of $C \times H \to X$ is uniruled.  
\end{lemma}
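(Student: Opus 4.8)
The plan is to argue by contradiction using Mori's bend-and-break, exploiting the fact that a high-dimensional family of maps from a fixed curve forces rational curves on the image, and then using the orbifold Kobayashi--Ochiai theorem (Theorem \ref{thm:ko}) to control what happens when the image fails to be uniruled. First I would let $W \subseteq X$ denote the closure of the image of the evaluation morphism $\ev \colon C \times H \to X$, and let $\nu \colon \widetilde{W} \to W$ be a resolution; the strategy is to show $\widetilde W$ (hence $W$) is uniruled. Suppose not. Pick a general point $x \in W$ in the image; since $\dim H \geq \dim X \geq \dim W$, the fiber of $\ev$ over $x$ has dimension at least $\dim H - \dim W \geq \dim H - \dim X \geq 0$, but this alone is not enough — the key point is to produce a \emph{positive-dimensional} family of maps $C \to X$ all passing through a fixed point, or rather to use that the maps move with the source point varying. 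More precisely, I would consider the universal map $C \times H \to X$ and, after possibly shrinking $H$ and passing to a subvariety, arrange that through a general point of $W$ there pass maps parametrized by a positive-dimensional subfamily; deforming such a map while keeping two points on $C$ fixed (one mapping to the general point $x$, and using that $C$ is a fixed curve) and invoking bend-and-break yields a rational curve through $x$. Running $x$ over a dense set of points of $W$ shows $W$ is uniruled, contradicting the assumption.

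The cleaner route, which I would actually pursue, is the following: if the image $W$ is \emph{not} uniruled, then by a theorem of Graber--Harris--Starr type reasoning (or simply by the fact that non-uniruled varieties admit no non-constant maps from rational curves through a general point), every deformation of a given map $f \colon C \to X$ in the family $H$ that fixes the image of a general point of $C$ must be "rigid enough" that the family cannot have dimension exceeding $\dim W \leq \dim X$ coming from genuine moduli — but we need to separate the contribution of $\Aut$ or reparametrization. Here I would instead apply bend-and-break directly: since $\dim H \geq \dim X$, after fixing a general point $c_0 \in C$ the evaluation-at-$c_0$ map $H \to X$, $h \mapsto \ev(c_0, h)$, either is generically finite onto its image (so $\dim(\text{image}) = \dim H \geq \dim X$, forcing it dominant, and then fibers are positive-dimensional as $\dim H \geq \dim X$ would need $\dim H = \dim X$ with finite fibers — handle this boundary case by also fixing a second point, or by noting the fiber over a general point still has dimension $\geq \dim H - \dim X$, and if this is $0$ we fix only one point and use that a $\dim X$-dimensional family dominating $X$ with a fixed source curve still lets us bend-and-break after attaching the fixed point $c_0$), or has positive-dimensional fibers, in which case through a general point $x = \ev(c_0,h)$ we get a positive-dimensional family of maps $C \to X$ all sending $c_0 \mapsto x$; bend-and-break then produces a rational curve through $x$.

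Once a rational curve through a general point of $W$ is produced, $W$ is uniruled by definition, completing the contradiction. The main obstacle — and the step requiring the most care — is the dimension bookkeeping needed to guarantee that bend-and-break applies, i.e.\ ensuring that after fixing enough marked points on the fixed curve $C$ one retains a positive-dimensional family of deformations with those points pinned; this is where the hypothesis $\dim H \geq \dim X$ is used, together with the fact that one may freely choose how many points of $C$ to fix (at most $\dim X$, hence "one more than generically needed"). A secondary subtlety is that $H \subseteq \underline{\Hom}_k^{nc}(C,(X,\Delta))$ parametrizes \emph{orbifold} maps and $X$ may be merely projective, but for this lemma we only use the underlying maps $C \to X$ and the assumption that $H$ is of finite type and integral; the orbifold structure and the general type hypothesis on $(X,\Delta)$ enter only insofar as Theorem \ref{thm:ko} is invoked if one prefers the alternative argument, but the bend-and-break argument as sketched does not even need Theorem \ref{thm:ko} — it uses only that $H$ has dimension at least $\dim X$ and that the maps are non-constant. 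I would present the bend-and-break version as the main proof.
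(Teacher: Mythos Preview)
Your proposal has a genuine gap in the boundary case $\dim H = \dim X$. You claim that the bend-and-break argument ``does not even need Theorem \ref{thm:ko}'', but this is incorrect, and the orbifold general type hypothesis is essential. Consider the following counterexample to the unadorned bend-and-break approach: let $X = E_1 \times \cdots \times E_n$ be a product of elliptic curves (so $\dim X = n$), let $C = E_1$, and let $H$ be the $n$-dimensional family of morphisms $c \mapsto (c + a_1, a_2, \ldots, a_n)$ parametrized by $(a_1, \ldots, a_n) \in X$. Then $\dim H = \dim X$, the evaluation map $C \times H \to X$ is surjective, and for \emph{every} $c_0 \in C$ the map $\ev_{c_0} \colon H \to X$ is an isomorphism---hence generically finite and dominant, with zero-dimensional fibers. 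No choice of fixed point (or two fixed points) yields a positive-dimensional family of pointed maps, so bend-and-break never fires, and indeed $X$ is not uniruled. Of course $(X,0)$ is not of general type, which is precisely the point: the general type hypothesis is doing real work, and it enters through Theorem \ref{thm:ko}.

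The paper's proof handles this as follows. After shrinking $H$ to be smooth, one observes (via Corollary \ref{cor:hom_curve_to_orbifold}) that for all but finitely many $c \in C$ the map $\ev_c \colon H \to (X,\Delta)$ is an \emph{orbifold} morphism. Now the crucial step: by Theorem \ref{thm:ko}, there are only finitely many dominant orbifold maps $H \to (X,\Delta)$; if infinitely many $c$ had $\ev_c$ dominant, infinitely many of these maps would coincide, forcing every $f \in H$ to be constant on an infinite subset of $C$, hence constant---contradicting $H \subset \underline{\Hom}^{nc}$. Therefore $\ev_c$ is non-dominant for $c$ in a dense open $U \subset C$, so its image has dimension at most $\dim X - 1 < \dim H$, and the general fiber of $\ev_c$ is positive-dimensional. \emph{Now} bend-and-break applies (with the degree bound coming from finite-typeness of $H$) to produce a bounded-degree rational curve through a general point of the image. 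Your sketch correctly identifies bend-and-break as the endgame, but the reduction to non-dominant $\ev_c$ via orbifold Kobayashi--Ochiai is the missing idea.
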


\begin{proof}
Replacing $H$ by a dense open if necessary, we may assume that $H$ is smooth.

Let $\Sigma := \{c \in C(k) \ | \ \ev_c(H) \subset \Delta\}$. Note that $\Sigma$ is a finite subset of $C$. It follows from Corollary \ref{cor:hom_curve_to_orbifold} that the universal evaluation map $C \times H \to X$ defines an orbifold morphism $C \times H \to (X,\Delta)$. In particular, for every $c \in C(k) \setminus \Sigma$, the morphism $\ev_c \colon H \to (X,\Delta)$ is orbifold. 
 
By Theorem \ref{thm:ko}, as $H$ is a variety, the set of $c$ in $C(k) \setminus \Sigma$ with $\ev_c$ dominant is finite. Indeed, assume for a contradiction that there is a sequence $c_1,c_2,\ldots$ of pairwise distinct points of $C \setminus \Sigma$ such that $\ev_{c_1}$, $\ev_{c_2}, \ldots$ are dominant. Then, by Theorem \ref{thm:ko}, replacing $c_1, c_2, \ldots$ by a subsequence if necessary, we must have that $\ev_{c_1} = \ev_{c_2} = \ldots$. This implies that, for all $f$ in $H$, we have $f(c_1) = f(c_2) = f(c_3) = \ldots$, i.e., $f$ is constant. This contradicts our assumption that $H$ lies in the moduli space of non-constant maps from $C$ to $X$. We conclude that there is a dense open $U \subset C(k) \setminus \Sigma$ such that, for all $c$ in $U$, the morphism $\ev_c \colon H \to X$ is non-dominant.

We now adapt part of the proof of \cite[Lemma~2.2.1]{GuerraPirola}. Note that, for all $c$ in $U$, the closure of the image of $\ev_c$ is of dimension at most $\dim X-1$. Thus, since $H$ has dimension at least $\dim X$, for every $c$ in $U$, there is a dense open $V_c \subset \ev_c(H)$ such that, for every $x$ in $V_c$, the fiber of $\ev_c \colon H \to X$ over $x$ is positive-dimensional. 
In particular, for every $x$ in $V_c$, there is a curve $D_x \subset H$ that is contracted to $x$ along $\ev_c$. Consequently, the scheme $\underline{\Hom}((C,c),(X,x)) \cap H$ is positive-dimensional (as it contains the curve $D_x$).

Let $\overline{C}$ be the smooth projective model for $C$. For every $f$ in $H$ given by a morphism $f \colon C \to X$, we let $\overline{f}\colon \overline{C}\to X$ denote the unique extension to $\overline{C}$.
Now, let $L$ be an ample line bundle on $X$ and note that, since $H$ is of finite type over $k$, there is a constant $\alpha$ (depending only on $L$ and $H$) such that, for every $f \in H$, the degree of $\overline{f}^\ast L$ on $\overline{C}$ is bounded by $\alpha$. In particular, for every $c$ in $U$ and $x$ in $V_c$, the moduli scheme $\underline{\Hom}^{\leq \alpha}((\overline{C},c),(X,x))$ of morphisms $f \colon \overline{C} \to X$ with $f(c) = x$ and $\deg \overline{f}^\ast L \leq \alpha$ is positive-dimensional, so that there is a rational curve of degree at most $2 \alpha$ in $X$ passing through $x$ (see \cite[Proposition~3.5]{DebarreBook}). 
We conclude that, for every $x$ in $\cup_{c\in U} V_c$, there is a rational curve $\mathbb{P}^1\to X$ of degree at most $2\alpha$ passing through $x$.  

Let $Z$ be the closure of the image of $C \times H \to X$ and note that the closure of $\cup_{c\in U} V_c$ equals $Z$. Then, the morphism
\[
\mathbb{P}^1_k \times \underline{\Hom}^{\leq 2\alpha}(\mathbb{P}^1,Z) \to Z
\]
is dominant (as its image contains $\cup_{c\in U} V_c$), so that $Z$ is uniruled. This concludes the proof.
\end{proof}
 
\begin{remark}\label{remark:assumption}
Let $(X,\Delta)$ be a smooth proper orbifold and let $Z \subseteq X$ be a closed subset. The following conditions are equivalent:
\begin{enumerate}
\item For every smooth quasi-projective curve $C^0$ and every orbifold morphism $C^0 \to (X,\Delta)$ not factoring over $Z$, the curve $C^0$ is of log-general type.
\item For every smooth proper orbifold curve $(C,\Delta_C)$ and every morphism $f \colon (C,\Delta_C)\to (X,\Delta)$ with $f(C)\not \subset Z$, we have that $(C,\Delta_C)$ is of general type.
\end{enumerate}  Indeed, to show that $(2) \implies (1)$, let $C$ be the smooth projective model of $C^0$ and let $\Delta_C$ be the divisor $C \setminus C^0$ (where we give each point multiplicity $\infty$). To show that $(1)\implies (2)$, we invoke the following fact:  for every smooth proper orbifold curve $(C,\Delta_C)$ which is not of general type, there  is a dominant orbifold morphism $\mathbb{G}_m \to (C,\Delta_C)$ or a dominant orbifold morphism $E\to (C,\Delta_C)$ with $E$ an elliptic curve.  
% With two points on P1 it is obvious we get a Gm. With three points, the extremal possibilities for the multiplicities are 2,2,infty, or 2,3,6, or 2,4,4, or 3,3,3. With four points, there's only 2,2,2,2.
\end{remark}
 
\begin{lemma}\label{lemma:two_dim}
Let $(X,\Delta)$ be a smooth projective orbifold and let $Z \subset X$ be a closed subset. Assume that, for every smooth quasi-projective curve $D$ and orbifold morphism $f \colon D \to (X,\Delta)$ with $f(D) \not\subset Z$, the curve $D$ is of log-general type. Let $C$ be a smooth quasi-projective curve. If $H$ is a positive-dimensional integral locally closed subscheme of $\underline{\Hom}_k^{nc}(C,(X,\Delta))\setminus \underline{\Hom}_k(C,Z)$, then the image of the universal evaluation map $C \times H \to X$ is at least two-dimensional.  
\end{lemma}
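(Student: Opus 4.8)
The plan is to argue by contradiction: suppose the image $Z' := \overline{\ev(C \times H)}$ is at most one-dimensional. Since $H$ is positive-dimensional, $Z'$ cannot be zero-dimensional (otherwise $\ev$ is constant on each $\{c\} \times H$, forcing every $f \in H$ to be constant, contradicting $H \subset \underline{\Hom}_k^{nc}(C,(X,\Delta))$). So $Z'$ is a curve. By Corollary \ref{cor:hom_curve_to_orbifold}, shrinking $H$ to a dense open and normalizing, we may assume $H$ is smooth and the universal evaluation map $\ev \colon C \times H \to (X,\Delta)$ is an orbifold morphism. Let $\nu \colon \widetilde{Z'} \to Z'$ be the normalization; since $Z' \not\subset Z$ (as the generic point of any non-constant $f \in H$ avoids $Z$ by hypothesis) and $\ev$ is orbifold, the induced map $C \times H \to \widetilde{Z'}$ is, after restricting to a big open, an orbifold morphism to the orbifold curve $(\widetilde{Z'}, \Delta')$ pulled back from $(X,\Delta)$. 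The key point is that the composition $D \to \widetilde{Z'} \to (X,\Delta)$ for a general curve $D \subset C \times H$ mapping non-constantly is an orbifold morphism not factoring over $Z$, so by hypothesis $\widetilde{Z'}$ equipped with this orbifold structure — or rather its coarse space together with the boundary divisor $\widetilde{Z'} \setminus (\text{preimage of } X \setminus Z)$ — is of log-general type.

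Now I would derive the contradiction from the existence of ``many'' maps $C \to Z'$. Concretely: for a general point $c_0 \in C(k)$, the map $\ev_{c_0} \colon H \to X$ has image inside the curve $Z'$, hence (after normalization) factors through a non-constant morphism $H \to \widetilde{Z'}$ from a positive-dimensional variety to a curve of log-general type — but more usefully, consider instead fixing a general $f_0 \in H(k)$ and varying $c$: the map $c \mapsto \ev(c, f_0)$ traces out $C \to Z'$. The genuinely useful move is to observe that since $\dim H \geq 1$ and $\dim Z' = 1$, there is (for a general point $z \in Z'$ and general $c \in C$) a positive-dimensional family inside $H$ of maps sending $c$ to $z$; equivalently $\underline{\Hom}((C,c),(Z',z))$ is positive-dimensional. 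But $C \to \widetilde{Z'}$ where $\widetilde{Z'}$ is a curve of log-general type: by De Franchis–Severi (finiteness of non-constant maps from a fixed curve to a fixed log-general-type curve), the scheme $\underline{\Hom}^{nc}(\overline{C}, \widetilde{Z'})$ is finite, so $\underline{\Hom}((C,c),(Z',z))$ is finite. A positive-dimensional family of non-constant maps $C \to Z'$ all passing through a fixed point is impossible unless the maps are constant near that point — contradiction. This is essentially the same bend-and-break-free rigidity argument used in the proof of Lemma \ref{lemma:ko_app}, but one dimension down.

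More carefully, the dichotomy to handle is whether $\ev_c \colon H \to Z'$ is dominant for general $c$. If $\ev_c$ is generically finite onto $Z'$ for general $c \in C(k)$, then $\dim H = 1$ and $\ev_c$ is dominant; picking two general points $c_1 \neq c_2$ and applying the De Franchis finiteness to the induced maps $\widetilde{Z'} \dashrightarrow \widetilde{Z'}$... this needs care, so instead I would run the argument exactly as in Lemma \ref{lemma:ko_app}: since $\widetilde{Z'}$ is log-general type, any non-constant orbifold morphism from a curve to it is rigid in the appropriate sense, and the finiteness forces, for general distinct $c_1, c_2 \in C$, that $\ev_{c_1} = \ev_{c_2}$ as maps $H \to Z'$ (possibly after a subsequence / shrinking), whence every $f \in H$ satisfies $f(c_1) = f(c_2)$ for a dense set of pairs, so $f$ is constant — contradicting $H \subset \underline{\Hom}_k^{nc}$. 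If instead $\ev_c$ is non-dominant (has image a point) for general $c$, then $\ev(C \times H)$ is swept out by the curves $\ev(C \times \{f\})$, and one checks directly these are the actual images of the maps $f$; since these fill out only the curve $Z'$ and $\dim H \geq 1$, two distinct general $f_1, f_2$ have $\overline{f_1(C)} = \overline{f_2(C)} = Z'$, reducing again to De Franchis-type rigidity for maps $C \to \widetilde{Z'}$.

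The main obstacle I anticipate is the bookkeeping around the orbifold structure on $\widetilde{Z'}$: one must check that the orbifold morphism $C \times H \to (X,\Delta)$, when corestricted through the normalization of its one-dimensional image, actually induces orbifold morphisms to a well-defined orbifold curve $(\widetilde{Z'}, \Delta_{\widetilde{Z'}})$ that is of general type — invoking the hypothesis on log-general type of curves mapping to $(X,\Delta)$ off $Z$ via Remark \ref{remark:assumption} — and that De Franchis–Severi (or Kobayashi–Ochiai, Theorem \ref{thm:ko}, applied to this orbifold curve, which has general type hence the Hom-finiteness) applies in the orbifold setting. Once that rigidity input is in hand, the contradiction with $H$ being positive-dimensional and consisting of non-constant maps is immediate, exactly paralleling the endgame of Lemma \ref{lemma:ko_app}.
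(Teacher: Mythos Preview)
Your proposal contains the correct core argument, and it is the same as the paper's: assume the image is a curve $D$, normalize to get a smooth proper orbifold curve $(D',\Delta_{D'})$ with $\Delta_{D'}$ minimal so that $(D',\Delta_{D'})\to(X,\Delta)$ is orbifold, conclude via Remark~\ref{remark:assumption} that $(D',\Delta_{D'})$ is of general type, and then apply De~Franchis (or Theorem~\ref{thm:ko}) to the maps $C\to(D',\Delta_{D'})$ to force $H$ finite.

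However, you reach this conclusion by a needlessly circuitous route. The dichotomy in your third paragraph on whether $\ev_c\colon H\to Z'$ is dominant, and the attempt to parallel the endgame of Lemma~\ref{lemma:ko_app} by comparing $\ev_{c_1}$ and $\ev_{c_2}$ as maps out of $H$, are both unnecessary. The paper's proof is simply: by the universal property of normalization, \emph{every} $f\in H$ factors as $C\to D'\to X$, and by minimality of $\Delta_{D'}$ this factorization is automatically a dominant orbifold morphism $C\to(D',\Delta_{D'})$; De~Franchis then bounds the number of such maps, so $H$ is finite, contradiction. Your second paragraph already essentially says this (``$\underline{\Hom}^{nc}(\overline{C},\widetilde{Z'})$ is finite''), so you should stop there rather than branching into cases. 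Also, you do not need to invoke Corollary~\ref{cor:hom_curve_to_orbifold} or shrink $H$: the factorization through $D'$ and the orbifold property of each individual $f\colon C\to(D',\Delta_{D'})$ hold pointwise for every $f\in H$, without any family-level statement.
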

\begin{proof}
Clearly, the image of $C \times H \to X$ is positive-dimensional.    Let $D \subset X$ be the scheme-theoretic image of $\ev \colon C \times H \to X$ and note that $D$ is an integral closed subscheme which is not contained in $Z$ (as $H$ is not contained in $\underline{\Hom}_k(C,Z)$). Assume for a contradiction that $D$ is one-dimensional.  

Let $D' \to D$ be the normalization of the integral curve $D$. Let $\Delta_{D'}$ be the orbifold divisor on $D'$ induced by $\Delta$, i.e., $(D',\Delta_{D'})\to (X,\Delta)$ is a morphism of orbifolds and $\Delta_{D'}$ is minimal with this property.  Since the image of $D'\to X$ is not contained in $Z$, it follows from our assumption (see Remark \ref{remark:assumption}) that the smooth proper orbifold $(D',\Delta_{D'})$ is of general type. Now, for every $f$ in $H$, by the universal property of normalizations, the morphism $f \colon C \to X$ factors over $D'\to X$.  Moreover, by construction of $\Delta_{D'}$,  the morphism $C\to D'$ is in fact a dominant orbifold morphism $C \to (D',\Delta_{D'})$. However, by Campana's De Franchis theorem \cite[\S 3]{Ca05} (or Theorem \ref{thm:ko}), the set of dominant orbifold morphisms $C \to (D', \Delta_{D'})$ is finite. This implies that $H$ is finite contradicting the positive-dimensionality of $H$.  

We conclude that $D$ is at least two-dimensional, as required.
\end{proof}
 
\begin{theorem}\label{thm:culmination}
Let $(X,\Delta)$ be a smooth projective orbifold surface of general type with $X$ non-uniruled. Let $Z \subseteq X$ be a closed subvariety such that, for every smooth quasi-projective curve $D$ and every morphism $f \colon D \to (X,\Delta)$ with $f(D) \not\subset Z$, the curve $D$ is of log-general type. Let $C$ be a smooth quasi-projective curve. Then, the moduli scheme $\underline{\Hom}^{nc}(C,(X,\Delta)) \setminus \underline{\Hom}(C,Z)$ is of dimension at most one.
\end{theorem}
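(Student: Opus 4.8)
The plan is to reduce the statement to its irreducible components and then obtain a contradiction from the two structural lemmas already in hand, exploiting that $X$ is a \emph{non-uniruled} surface.

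First I would record that $\underline{\Hom}^{nc}(C,(X,\Delta))\setminus \Hom(C,Z)$ is a locally closed subset of the Hom-scheme $\underline{\Hom}_k(\overline{C},X)=\sqcup_{P}\underline{\Hom}_k^P(\overline{C},X)$, hence a countable union of quasi-projective $k$-schemes; consequently it suffices to bound the dimension of each of its irreducible components. Fix such a component and equip it with its reduced structure, obtaining an integral, finite-type (indeed quasi-projective) $k$-scheme $H$ that is locally closed in $\underline{\Hom}^{nc}_k(C,(X,\Delta))\setminus \underline{\Hom}_k(C,Z)$. I would then argue by contradiction, assuming $\dim H\geq 2$.

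Since $\dim X=2$, the assumption $\dim H\geq 2$ means $\dim H\geq \dim X$, so Lemma \ref{lemma:ko_app} applies and the scheme-theoretic image $W:=\overline{\ev(C\times H)}\subseteq X$ of the universal evaluation map is uniruled. (Note that $C\times_k H$ is integral, because $C$ is a variety and hence geometrically integral, so $W$ is an irreducible closed subscheme of $X$.) On the other hand, $H$ is positive-dimensional and the hypothesis on $Z$ in the present theorem is exactly the hypothesis of Lemma \ref{lemma:two_dim}, so that lemma applies and gives $\dim W\geq 2$. As $W$ is a closed subvariety of the surface $X$, this forces $W=X$, and therefore $X$ is uniruled — contradicting the hypothesis. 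Hence $\dim H\leq 1$, and since $H$ was an arbitrary irreducible component, the moduli scheme $\underline{\Hom}^{nc}(C,(X,\Delta))\setminus \Hom(C,Z)$ has dimension at most one.

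There is no genuine obstacle here: essentially all of the content has already been isolated in Lemmas \ref{lemma:ko_app} and \ref{lemma:two_dim}, and the argument above is the bookkeeping that splices them together. The only points deserving (minor) care are verifying that the reduced irreducible components of the moduli scheme really are integral, finite-type $k$-schemes so that both lemmas apply verbatim; checking that $\overline{\ev(C\times H)}$ is an irreducible closed subvariety of $X$ (which is where geometric integrality of $C$ enters); and interpreting ``dimension at most one'' componentwise, since a priori the moduli scheme is only a countable union of quasi-projective pieces.
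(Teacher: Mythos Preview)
Your proof is correct and matches the paper's own argument essentially verbatim: assume an irreducible component $H$ has dimension at least two, use Lemma~\ref{lemma:two_dim} to see the evaluation map $C\times H\to X$ is dominant, use Lemma~\ref{lemma:ko_app} to see its image is uniruled, and contradict non-uniruledness of $X$. The extra bookkeeping you add (reducing to components, integrality of $C\times H$, componentwise interpretation of dimension) is reasonable but the paper leaves it implicit.
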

\begin{proof}
We argue by contradiction. Assume $H \subset \underline{\Hom}^{nc}(C,(X,\Delta)) \setminus \underline{\Hom}(C,Z)$ is an irreducible component of dimension at least two. First, we note that the image of $C \times H \to X$ is of dimension at least two (Lemma \ref{lemma:two_dim}), and thus the morphism $C \times H \to X$ is dominant. However, by Lemma \ref{lemma:ko_app}, the image of $C \times H \to X$ is uniruled. We conclude that $X$ is uniruled, contradicting our assumption.  
\end{proof}

\subsection{Pseudo-bounded orbifold pairs}
We say that a locally factorial projective orbifold $(X,\Delta)$ is \emph{$1$-bounded modulo $Z$} if, for every ample line bundle $L$ on $X$ and every smooth projective connected curve $C$ over $k$, there is a constant $\alpha_{X,\Delta,L,Z,C}$ such that,  for every orbifold morphism $f \colon C \to (X,\Delta)$ with $f(C)\not\subset \Delta\cup Z$, the inequality
\[
\deg_C f^\ast L \leq \alpha_{X,\Delta,L,Z,C}  
\] 
holds.  
We say that a locally factorial projective orbifold $(X,\Delta)$ is \emph{pseudo-$1$-bounded} if there is a proper closed subset $Z \subsetneq X$   such that $(X,\Delta)$ is $1$-bounded modulo $Z$. 

\begin{corollary}\label{corollary:boundedness}
Let $(X,\Delta)$ be a smooth projective orbifold surface of general type with $X$ non-uniruled, and let $Z \subsetneq X$ be a proper closed subvariety such that $(X,\Delta)$ is $1$-bounded modulo $Z$.
Then, the following statements hold.
\begin{enumerate}
\item The moduli scheme $\underline{\Hom}^{nc}(C,(X,\Delta))\setminus \underline{\Hom}(C,Z)$ is a scheme of finite type over $k$ of dimension at most one.
\item For almost every $c$ in $C$, the evaluation map $\ev_c \colon \underline{\Hom}^{nc}(C,(X,\Delta))\setminus \underline{\Hom}(C,Z) \to X$ is quasi-finite.
\end{enumerate}
\end{corollary}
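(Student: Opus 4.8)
The plan for part~(1) is to extract the dimension bound from Theorem~\ref{thm:culmination} and then upgrade it to finite type via a uniform degree estimate; part~(2) will then follow from a short specialization argument.

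To apply Theorem~\ref{thm:culmination} I first verify its hypothesis: that every non-constant orbifold morphism $f\colon D\to(X,\Delta)$ from a smooth quasi-projective curve with $f(D)\not\subset Z$ has $D$ of log-general type. Arguing by contradiction and invoking Remark~\ref{remark:assumption}, I may assume that $\overline{D}$ is $\PP^1$ or an elliptic curve and that $\overline{D}\setminus D$ consists of at most two points. Extend $f$ to $\overline{f}\colon\overline{D}\to X$ (possible since $X$ is proper) and pick self-covers $\rho_n\colon\overline{D}\to\overline{D}$ of degree tending to infinity which are ramified to arbitrarily high order along $\overline{D}\setminus D$ — the maps $z\mapsto z^n$ on $\PP^1$ and the multiplication maps $[n]$ on an elliptic curve. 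Once $\deg\rho_n$ exceeds every (finite) multiplicity of $\Delta$, the composite $\overline{f}\circ\rho_n\colon\overline{D}\to(X,\Delta)$ is an orbifold morphism: over $\overline{D}\setminus D$ the high ramification pushes the coefficients of $(\overline{f}\circ\rho_n)^{\ast}E$ above the multiplicity of $E$, while away from the boundary $\overline{f}$ is already orbifold and $\rho_n$ does not decrease multiplicities. Its image coincides with that of $\overline{f}$, hence is not contained in $\supp\Delta\cup Z$, and its $L$-degree $\deg(\rho_n)\cdot\deg_{\overline{D}}\overline{f}^{\ast}L$ is unbounded, contradicting the hypothesis that $(X,\Delta)$ is $1$-bounded modulo $Z$ (applied with the fixed projective curve $\overline{D}$). (When $\Delta$ has components of infinite multiplicity a minor additional argument is needed, which we omit.) With the hypothesis secured, Theorem~\ref{thm:culmination} — whose remaining assumptions, that $(X,\Delta)$ is of general type and $X$ non-uniruled, are among our hypotheses — shows that $\underline{\Hom}^{nc}(C,(X,\Delta))\setminus\Hom(C,Z)$ has dimension at most one.

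For finite type it suffices to bound $\deg_{\overline{C}}\overline{f}^{\ast}L$ uniformly over all points $f$ of this Hom-scheme, since then it is a locally closed subscheme of a finite union of the quasi-projective schemes $\underline{\Hom}_k^d(\overline{C},X)$. The subtle point — and the main obstacle I anticipate — is that such an $f$ is an orbifold morphism from the \emph{quasi-projective} curve $C$ whose extension $\overline{f}\colon\overline{C}\to X$ need not be orbifold over $\overline{C}\setminus C$, so $1$-boundedness does not apply to $\overline{f}$ directly. I would get around this by the same covering device as above: fix once and for all a finite cover $\pi\colon C'\to\overline{C}$ by a smooth projective connected curve, ramified along $\overline{C}\setminus C$ to order at least the largest finite multiplicity of $\Delta$. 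Then $\overline{f}\circ\pi\colon C'\to(X,\Delta)$ is a genuine orbifold morphism whose image is not in $\supp\Delta\cup Z$, so $1$-boundedness modulo $Z$ bounds $\deg_{C'}(\overline{f}\circ\pi)^{\ast}L=\deg(\pi)\cdot\deg_{\overline{C}}\overline{f}^{\ast}L$ by a constant independent of $f$; this yields the required bound on $\deg_{\overline{C}}\overline{f}^{\ast}L$ and proves part~(1).

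For part~(2), write $H:=\underline{\Hom}^{nc}(C,(X,\Delta))\setminus\Hom(C,Z)$, which by part~(1) is of finite type of dimension at most one; let $H_1,\dots,H_r$ be its finitely many one-dimensional irreducible components. I would show that for each $j$ the set $\Sigma_j:=\{\,c\in C:\ev_c|_{H_j}\ \text{is constant}\,\}$ is finite: after base change to $\overline{k}$, were $\Sigma_j$ infinite then two distinct $\overline{k}$-points $f_1\neq f_2$ of $H_j$, viewed as morphisms $\overline{C}_{\overline{k}}\to X_{\overline{k}}$, would agree at infinitely many points of the reduced curve $\overline{C}_{\overline{k}}$, hence everywhere since $X$ is separated, a contradiction. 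For $c$ outside the finite set $\bigcup_j\Sigma_j$, the restriction $\ev_c|_{H_j}$ is a non-constant morphism from an irreducible curve and so has finite fibres, while on the zero-dimensional components of $H$ the map $\ev_c$ trivially has finite fibres; as $H$ has only finitely many irreducible components, $\ev_c\colon H\to X$ is quasi-finite. Hence $\ev_c$ is quasi-finite for almost every $c\in C$, as claimed.
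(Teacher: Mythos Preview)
Your argument follows the paper's approach closely: you verify the curve hypothesis of Theorem~\ref{thm:culmination} via unbounded-degree self-covers of non-log-general-type curves (the paper compresses this to the parenthetical ``a curve which is not of log-general type has endomorphisms of unbounded degree''), apply that theorem for the dimension bound, and handle part~(2) by showing $\ev_c$ is non-constant on each one-dimensional component for almost all $c$, exactly as the paper does.

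Your ramified-cover device $\pi\colon C'\to\overline{C}$ for the finite-type step is a helpful elaboration of what the paper merely asserts in one line. Note, however, that the infinite-multiplicity caveat you flag for the first verification applies equally here: if $\overline{f}$ sends some point of $\overline{C}\setminus C$ into a multiplicity-$\infty$ component $E$ of $\Delta$, then $(\overline{f}\circ\pi)^{-1}(E)\neq\emptyset$ and $\overline{f}\circ\pi\colon C'\to(X,\Delta)$ is \emph{not} an orbifold morphism, so the $1$-boundedness bound is not immediately available. The paper does not address this either, and for the intended application to BTCP-threefolds (where $\Delta=(1-\tfrac{1}{m})D$ has a single finite multiplicity) the issue is absent.
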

\begin{proof}
The scheme $\underline{\Hom}^{nc}(C,(X,\Delta))\setminus \underline{\Hom}(C,Z)$ is of finite type over $k$ by the assumption that $(X,\Delta)$ is $1$-bounded modulo $Z$. Moreover, this assumption implies that, for every smooth quasi-projective curve $C$ over $k$ and every $f \colon C\to (X,\Delta)$ with $f(C) \not \subset Z$, the curve $C$ is of log-general type. (Here we use that a curve which is not of log-general type has endomorphisms of unbounded degree.) Therefore, the first statement follows directly from Theorem \ref{thm:culmination}. 

The second statement follows easily from the first statement, as we show now. Namely, let $H_1,\ldots, H_r \subset \underline{\Hom}^{nc}(C,(X,\Delta))\setminus \underline{\Hom}(C,Z)$ be the positive-dimensional irreducible components of $\underline{\Hom}^{nc}(C,(X,\Delta))\setminus \underline{\Hom}(C,Z)$. Note that the set $\Sigma$ of $c$ in $C(k)$ such that there is an $1\leq i \leq r$ with $\ev_c \colon H_i\to X$ constant is finite. In particular, for every $c$ in $C(k)\setminus \Sigma$, the morphism $\ev_c \colon H_i \to X$ is non-constant, and thus quasi-finite (as $H_i$ is one-dimensional). It readily follows that, for every $c$ in $C\setminus \Sigma$, the morphism $\ev_c \colon \underline{\Hom}^{nc}(C,(X,\Delta)) \setminus \underline{\Hom}(C,Z) \to X$ is quasi-finite. 
\end{proof}

Corollary \ref{corollary:boundedness} implies that $\underline{\Hom}^{nc}(C,(X,\Delta))\setminus \underline{\Hom}(C,Z)$ is a hyperbolic quasi-projective scheme of dimension at most one. In particular, it satisfies Lang--Vojta's conjectures and has only finitely many integral points (on any model over the integers). However, we require the finiteness of rational points on this space, so we wish to establish that   the smooth projective model of every one-dimensional  component of the moduli space $\underline{\Hom}^{nc}(C,(X,\Delta)) \setminus \underline{\Hom}(C,Z)$ has genus at least two. This is however false without any additional assumptions (Remark \ref{remark:assumption2}). In the next section we prove the desired property, assuming the variety $X$ underlying the orbifold $(X,\Delta)$ is a surface of Kodaira dimension one whose elliptic fibration is non-isotrivial.

\section{Kodaira dimension one and rational points on moduli spaces of orbifold maps}\label{section:kodaira_dim_one}
Using our results above, we now prove the following structure result for the moduli space of orbifold maps, assuming that the variety underlying the orbifold is a non-isotrivial Kodaira dimension one surface.  

\begin{theorem}\label{theorem:Kodaira_dimension_one}  
Let $(X,\Delta)$ be a smooth projective orbifold of general type with $X$ a Kodaira dimension one surface whose elliptic fibration is non-isotrivial. Let $Z \subset X$ be a proper closed subset such that $(X,\Delta)$ is $1$-bounded modulo $Z$. Let $C$ be a smooth quasi-projective curve and let $H \subset \underline{\Hom}_k^{nc}(C,(X,\Delta)) \setminus \underline{\Hom}(C,Z)$ be a positive-dimensional irreducible component. Then $H$ is birational to a smooth projective curve of genus at least two.
\end{theorem}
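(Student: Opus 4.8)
The plan is to produce, from a positive-dimensional component $H$, a curve of genus at least two onto which the whole family ``factors.'' First I would record the basic structure: by Corollary \ref{corollary:boundedness}, $H$ is a one-dimensional integral finite-type scheme over $k$, and for almost every $c \in C$ the evaluation map $\ev_c \colon H \to X$ is quasi-finite; moreover by Theorem \ref{thm:algebraicity_of_moduli} (or Corollary \ref{cor:hom_curve_to_orbifold}), the universal evaluation map $\ev \colon C \times S \to (X,\Delta)$ is an orbifold morphism, where $S \to H$ is the normalization, so in particular $S$ is a smooth quasi-projective curve. The surface $D := \overline{\ev(C \times S)} \subset X$ is two-dimensional by Lemma \ref{lemma:two_dim}, hence dominates $X$; thus $\ev \colon C \times S \to X$ is a dominant generically finite map of surfaces.

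Next I would exploit the elliptic fibration $p \colon X \to B$ onto the base curve $B$ of the Iitaka fibration (here $B$ is a curve since $X$ has Kodaira dimension one). Composing gives a morphism $q \colon C \times S \to B$. The key point is that $q$ is \emph{not} constant on the generic fiber $C \times \{s\}$ of the projection $\mathrm{pr}_S \colon C \times S \to S$: if it were, then each map $\ev_s \colon C \to X$ would land in a single fiber of $p$, i.e.\ in an elliptic curve (or degenerate fiber); since $S$ is positive-dimensional and the fibration is non-isotrivial, a standard isogeny/Shafarevich-type argument (the non-isotriviality being essential, cf.\ the discussion after Theorem \ref{thm:main2}) shows the family of such maps must be constant, contradicting non-constancy of the family. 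Hence $q$ restricts to a non-constant morphism $C \to B$ for the generic $s$; by rigidity this forces $q$ to \emph{not} factor through $\mathrm{pr}_S$, and in fact $q$ is non-constant on the generic fiber $\{c\} \times S$ as well — so $B$ receives a dominant map from $S$ (up to replacing by a subcurve), giving a non-constant morphism $S \to B$, and I would argue $H$ is birational to its image, i.e.\ $H$ is birational to a subcurve of $B$.

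It then remains to see that this curve has genus at least two, and here the orbifold general-type hypothesis on $(X,\Delta)$ and the $1$-boundedness modulo $Z$ enter. One shows the map $S \to B$ factors through (the normalization of) $B$ with the induced orbifold structure coming from the multiple fibers of $p$, i.e.\ through $(B,\Delta_p)$, and that $(X,\Delta)$ being of general type forces $(B,\Delta_p)$ to be an orbifold curve of general type — whence its coarse space, once the genus-plus-orbifold-points contribution is accounted for, already has $2g-2+\deg(\text{orbifold part}) > 0$; combined with the fact that a quasi-projective curve admitting an orbifold morphism into $(X,\Delta)$ avoiding $Z$ is of log-general type (used in Corollary \ref{corollary:boundedness}), one upgrades this to: $S$ itself is of log-general type, and since by De Franchis--Campana (Lemma \ref{lemma:two_dim}, \cite[\S 3]{Ca05}) the maps in $H$ cannot all dominate a fixed orbifold curve, the image curve in $B$ must have compactification of genus $\ge 2$. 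The main obstacle I anticipate is the middle step: rigorously ruling out that the family $\{\ev_s \colon C \to X\}_{s \in S}$ is ``vertical'' for $p$ — i.e.\ genuinely using non-isotriviality of the elliptic fibration (via Shafarevich's isogeny theorem, as advertised) to propagate a non-constant map at one point to a non-constant composed map $S \to B$, and then to pin down $H \dashrightarrow B$ as a birational (not just dominant) correspondence. The remaining genus bound is essentially bookkeeping with orbifold Euler characteristics.
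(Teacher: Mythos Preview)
Your proposal has a genuine gap in the middle step. You correctly argue that the composed map $q \colon C \times S \to B$ is non-constant on the generic fiber $C \times \{s\}$. But then you claim ``by rigidity this forces $q$ to be non-constant on the generic fiber $\{c\} \times S$ as well,'' and from this you want a non-constant map $S \to B$. This is exactly backwards. Rigidity of non-constant morphisms between curves says that the family $\{q_s \colon C \to B\}_{s \in S}$ is locally constant, hence constant since $S$ is connected; therefore $q$ \emph{factors through} $\mathrm{pr}_C$, and $q$ is \emph{constant} on each $\{c\} \times S$. So there is no non-constant map $S \to B$ arising this way, and the subsequent claim that $H$ is birational to a subcurve of $B$ collapses. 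The final paragraph, which tries to extract a genus bound from the orbifold base $(B,\Delta_p)$, is too vague to repair this: even granting that $(B,\Delta_p)$ is an orbifold curve of general type, this says nothing directly about the genus of the compactification of $S$, only about curves mapping \emph{to} $(B,\Delta_p)$.

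The paper's proof proceeds quite differently, by contradiction on the genus $g$ of the smooth projective model $\overline{H}$ of $H$. Non-uniruledness of $X$ immediately gives $g \ge 1$. If $g = 1$, then $\overline{C} \times \overline{H}$ (which dominates the Kodaira dimension one surface $X$) has Kodaira dimension exactly one, and its Iitaka fibration is the projection onto $\overline{C}$. By functoriality of the Iitaka fibration, the rational map $\overline{C} \times \overline{H} \dashrightarrow X$ is compatible with the elliptic fibration $X \to B$ via a morphism $\overline{C} \to B$; in particular, the fibers $\{c\} \times \overline{H}$ dominate the elliptic fibers of $X \to B$. Thus the elliptic curve $\overline{H}$ dominates almost all fibers of $X \to B$, forcing them to be pairwise isogenous and hence the fibration to be isotrivial --- a contradiction. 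Note that non-isotriviality enters at the very end, not via a Shafarevich-type argument on the ``vertical'' case.
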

\begin{proof}
Since $X$ is non-uniruled, it follows from Corollary \ref{corollary:boundedness}.(1) that $H$ is one-dimensional. Moreover, the evaluation map $C \times H \to X$ is dominant (Lemma \ref{lemma:two_dim}). Let $\overline{H}$ be the smooth projective model of $H$, and let $g$ be its genus. To prove the corollary, it suffices to show that $g \geq 2$. First, since $X$ is not uniruled, we see that $g \geq 1$. We now argue by contradiction and assume that $g=1$.  

Let $X \to B$ be the non-isotrivial elliptic fibration on $X$ (induced by the pluricanonical system of $X$). Let $C\subseteq \overline{C}$ be the smooth compactification  of $C$.  Since $C \times \overline{H}$ dominates $X$ (see Remark \ref{remark:kloosterman})   and $\overline{H}$ has genus one, we have that $\overline{C} \times \overline{H}$ is of Kodaira dimension one. In particular, the Iitaka fibration of $\overline{C}\times \overline{H}$ is the projection onto $\overline{C}$. Therefore, by the canonicity of the Iitaka fibration, there is a commutative diagram
\[
\xymatrix{
\overline{C} \times \overline{H} \ar[d] \ar@{-->}[rrr]^{\textrm{rational map}} &&& X \ar[d] \\ 
\overline{C} \ar[rrr] &&& B.
}
\]
It follows that almost all fibers of $X \to B$ are dominated by the elliptic curve $\overline{H}$. In particular, it follows that almost all fibers of $X \to B$ are isogenous to each other, so that $X \to B$ is isotrivial. This contradiction completes the proof.
\end{proof}

\begin{remark}\label{remark:kloosterman}
We note that, with notation as in the above proof, the analysis of the case that $C\times \overline{H}$ dominates $X$ can not be omitted. Indeed, there exist smooth projective surfaces $X$ of Kodaira dimension one which are dominated by a product of two curves of genus at least two. One can construct such surfaces as follows (using some of the notions studied in \cite{Heijne}):
For every even integer $n \geq 12$, consider the smooth projective surface $X$ defined by the affine equation
\[
1+x t^n + x^3 + y^2=0
\]
in $\mathbb{A}^2_{x,y} \times \mathbb{A}^1_t$. Then $X$ is a Kodaira dimension one surface with non-isotrivial elliptic fibration (given by the projection onto $t$). This surface is dominated by a Fermat surface $X^m + Y^m = Z^m + W^m$ for some large integer $m$. Let $C$ be the smooth affine curve defined by $x^m + y^m = 1$ and note that $C \times C$ dominates $X$.   
\end{remark}

\begin{remark}\label{remark:assumption2}
Theorem \ref{theorem:Kodaira_dimension_one} is false without the assumption that $X$ is of Kodaira dimension one and non-isotrivial. Consider, for example, the smooth projective orbifold of general type $(X,\Delta) := (E \times E,  E \times \frac{1}{2} \{0\} + \frac{1}{2} \{0\} \times E)$ where $E$ is an elliptic curve and $0 \in E$ is its origin. Then, if $C$ is any smooth quasi-projective curve dominating $(E, \frac{1}{2} \{0\})$, the moduli space $\underline{\Hom}(C, (X, \Delta))$ contains a copy of $E \setminus \{0\}$.
\end{remark}

\begin{corollary}\label{cor:finiteness_for_orbifold_surface1}
Let $X$ be a Kodaira dimension one smooth projective surface over a finitely generated field $K$ of characteristic zero whose elliptic fibration is non-isotrivial. Let $Z \subset X$ be a proper closed subset. Let $\Delta$ be an orbifold divisor on $X$ such that $(X,\Delta)$ is $1$-bounded modulo $Z$ and of general type. Then, for every smooth quasi-projective curve $C$ over $K$, the set of non-constant orbifold morphisms $f \colon C \to (X,\Delta)$ with $f(C) \not \subset Z$ is finite.  
\end{corollary}  
\begin{proof}  
It suffices to show that the scheme $\underline{\Hom}_K^{nc}(C,(X,\Delta)) \setminus \underline{\Hom}_K(C,Z)$ has only finitely many $K$-points. To do so, let $H$ be one of the finitely many irreducible components of $\underline{\Hom}_K^{nc}(C,(X,\Delta))\setminus \underline{\Hom}_K(C,Z)$. Then it suffices to show that $H(K)$ is finite. This is clear if $H$ is zero-dimensional. Thus, we may assume that $H$ is positive-dimensional in which case it is birational to a smooth projective curve of genus at least two (Theorem \ref{theorem:Kodaira_dimension_one}). It follows that $H(K)$ is finite by Faltings's theorem (\emph{formerly} Mordell's conjecture) \cite{FaltingsComplements}. This concludes the proof.
\end{proof}

\begin{remark}\label{remark:false}  
It follows from Remark \ref{remark:kloosterman} that Corollary \ref{cor:finiteness_for_orbifold_surface1} is false over algebraically closed fields of characteristic zero. More precisely, if $\overline{K}$ is an algebraic closure of $K$ (with notation as in Corollary \ref{cor:finiteness_for_orbifold_surface1}), then the set of non-constant orbifold maps $f \colon C_{\overline{K}}\to (X_{\overline{K}}, \Delta_{\overline{K}})$ with $f(C_{\overline{K}})\not\subset Z_{\overline{K}}$ is not necessarily finite.  
\end{remark}

\section{Bogomolov's theorem in the orbifold setting}\label{section:bogomolovs_thm}
Let $(X,\Delta)$ be a smooth projective orbifold pair over an algebraically closed field $k$ of characteristic zero. 
If $E \subset X$ is a closed subset, then we follow \cite{Dem97,  RTW, JR, Rou10, Rou12} and say that $(X,\Delta)$ is \emph{algebraically hyperbolic modulo $E$ over $k$} if, for every ample line bundle $L$ on $X$, there is a constant $\alpha_{X,\Delta,L,E}$ such that, for every smooth projective connected curve $\overline{C}$ over $k$, every dense open $C\subseteq \overline{C}$, and every non-constant orbifold morphism $f \colon C \to (X,\Delta)$ with $f(C)\not\subset \Delta\cup E$, the following inequality holds.
\[
\deg_{\overline{C}} \overline{f}^* L \leq \alpha_{X,\Delta,L,E} \cdot(2 \cdot g(\overline{C}) - 2 + \# (\overline{C}\setminus C)),
\] where $\overline{f}\colon \overline{C}\to X$ is the unique extension of $f$.
Obviously, if $(X,\Delta)$ is algebraically hyperbolic modulo $E$, then $(X,\Delta)$ is $1$-bounded modulo $E$.

We say that $(X,\Delta)$ is \emph{pseudo-algebraically hyperbolic over $k$} if there is a proper closed subset $E \subset X$ containing $\Delta$ such that $(X,\Delta)$ is algebraically hyperbolic modulo $E$.  

We start with a simple application of Riemann-Hurwitz to fibered surfaces.

\begin{lemma}\label{lemma:triviality}
Let $S$ be a smooth projective surface, let $D$ be an effective reduced divisor on $S$, let $B$ be a smooth projective curve, and let $p \colon S \to B$ be a flat proper (surjective) morphism. Then, there is a proper closed subset $Z\subsetneq S$ such that, for every smooth projective curve $C$ and non-constant morphism $f \colon C \to S$ contained in a fiber of $p$ satisfying $f(C)\not\subset Z$, the inequality
\[
\deg f^\ast (K_S+ D) \leq 2 g(C) - 2 + \#f^{-1}(D)
\]
holds.
\end{lemma}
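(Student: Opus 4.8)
The plan is to run a Riemann--Hurwitz argument on the normalization of the fibers of $p$, being careful that only finitely many fibers are non-reduced or singular. First I would stratify $S$ according to the fibers of $p$. Since $p\colon S\to B$ is a flat proper morphism of a smooth projective surface onto a smooth projective curve, there is a finite set $\Sigma\subset B$ such that over $B\setminus\Sigma$ the map $p$ is smooth (a submersion of smooth varieties), and over $\Sigma$ the fibers may be singular or non-reduced. Let $F_1,\dots,F_r$ be the (finitely many) irreducible components of the fibers $p^{-1}(\Sigma)$, and set $Z := F_1\cup\dots\cup F_r$, a proper closed subset of $S$ since each $F_i$ is a curve. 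I would also enlarge $Z$, if necessary, to contain the finitely many fiber components contained in $\supp D$ and those along which $D$ fails to meet the fiber transversally — this is only a finite amount of extra data and keeps $Z$ proper.

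Next, given a non-constant $f\colon C\to S$ contained in a fiber of $p$ with $f(C)\not\subset Z$, the image $f(C)$ lies in a single fiber $F = p^{-1}(b)$ with $b\notin\Sigma$, so $F$ is a smooth projective curve (a fiber of the smooth morphism $p|_{p^{-1}(B\setminus\Sigma)}$). The morphism $f$ factors as $C\to F\hookrightarrow S$ with $C\to F$ a finite morphism of smooth projective curves, say of degree $d$. By adjunction, $(K_S+F)|_F = K_F$, and since $F$ is a fiber we have $F|_F = 0$ in $\mathrm{Pic}(F)\otimes\mathbb{Q}$ (fibers are algebraically equivalent, hence numerically trivial when restricted to themselves), so $K_S|_F = K_F$. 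Therefore
\[
\deg_C f^\ast(K_S + D) = \deg_C f^\ast K_S + \deg_C f^\ast D = d\cdot\deg_F K_F + \deg_C f^\ast D.
\]
Now $C\to F$ is finite of degree $d$, so by Riemann--Hurwitz $\deg_C (\,\cdot\to F)^\ast K_F = d\,\deg_F K_F + \deg(\mathrm{ram}) \le 2g(C)-2$, i.e. $d\cdot\deg_F K_F \le 2g(C)-2$. For the divisor term, $D$ meets $F$ in a finite set of points (as $f(C)\not\subset Z\supseteq$ the components of $D$ that are fiber components), and the scheme-theoretic pullback of $D\cap F$ to $C$ is supported on $f^{-1}(D)$; hence $\deg_C f^\ast D \le \#f^{-1}(D)$ provided $D\cap F$ is reduced, which holds for $b$ outside the finite bad locus we already threw into $Z$ (or into $\Sigma$). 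Combining the two estimates gives
\[
\deg_C f^\ast(K_S+D) \le 2g(C)-2 + \#f^{-1}(D),
\]
as claimed.

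The main obstacle I anticipate is bookkeeping the exceptional locus $Z$ correctly: one must ensure that (i) all singular or multiple fiber components, (ii) all fiber components contained in $\supp D$, and (iii) fibers along which $D$ is non-reduced or tangent, are absorbed into a proper closed subset, and that away from $Z$ the clean adjunction identity $K_S|_F = K_F$ together with reducedness of $D\cap F$ holds. A secondary subtlety is the inequality $\deg_C f^\ast D \le \#f^{-1}(D)$: on a reduced $D\cap F$ consisting of $\#(D\cap F)$ points, the pullback has degree $\sum_{P\in D\cap F}\sum_{Q\mapsto P} e_Q \cdot (\text{mult}_P(D\cap F)) = \sum_{Q\in f^{-1}(D)} e_Q \ge \#f^{-1}(D)$ — this runs the \emph{wrong} way, so I must instead only claim that $\deg_C f^*D$ counts ramification over $D$-points with multiplicity and fold the excess ramification into the Riemann--Hurwitz slack; concretely, one writes $\deg_C f^\ast(K_S+D) = d\deg_F(K_F+ (D\cap F)_{\mathrm{red}}) + \deg_C f^\ast D - d\cdot\#(D\cap F)$ and notes the last two terms contribute at most $\#f^{-1}(D) - (\text{contribution already in }2g(C)-2)$ by Riemann--Hurwitz applied to $(F,(D\cap F)_{\mathrm{red}})$, i.e. one runs Riemann--Hurwitz for the orbifold/log pair $(F, D\cap F)$ directly. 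Making this last step precise — essentially the logarithmic Riemann--Hurwitz formula $2g(C)-2+\#f^{-1}(D) \ge d\,(2g(F)-2+\#(D\cap F))$ — is the real content, and everything else is the stratification argument above.
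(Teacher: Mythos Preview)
Your approach is correct and essentially identical to the paper's: both throw into $Z$ the singular fibers, the vertical components of $D$, and the fibers where $D$ fails to meet the fiber transversally, and then apply logarithmic Riemann--Hurwitz to the finite cover $g\colon C\to F$ onto the smooth fiber. The paper bypasses your initial attempt to bound $\deg_C f^*K_S$ and $\deg_C f^*D$ separately and goes directly to the injection $g^*\omega_F(\iota^{-1}(D)) \hookrightarrow \omega_C(f^{-1}(D))$, which is precisely the log Riemann--Hurwitz inequality $2g(C)-2+\#f^{-1}(D)\ge d\bigl(2g(F)-2+\#(D\cap F)\bigr)$ that you identify as the real content at the end.
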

\begin{proof} 
We may write $D = D^h + D^v$, where the components of $D^h$ are horizontal (i.e., surject onto $B$) and the components of $D^v$ are vertical (i.e., are contained in a fiber of $p$).
Let $B^{t}$ be the set of closed points $b$ in $B$ such that the intersection of $D^h$ and the fiber $p^{-1}(b) = S_b$ is transversal. Note that $B^t$ is a dense open of $B$.
Since $S$ is smooth, the fibration $p \colon S \to B$ has only finitely many singular fibers. 
We define $Z$ to be the (finite) union of $\supp D^v$, the singular fibers of $S \to B$, and the fibers $S_b$ where $b$ runs over all points $b \in B \setminus B^t$.

Let $C$ be a smooth projective curve and let $f \colon C \to S$ be a non-constant morphism with $f(C)\subset S$ contained in a fiber and $f(C) \not\subset Z$. By construction of $Z$, we have that $F:=f(C)$ is a smooth fiber of $p$, and that $F$ intersects $D$ transversally. Write $\iota \colon F \to S$ for the inclusion and $g \colon C \to F$ for the morphism induced by $f \colon C \to S$. Then, as $K_S|_F $ and $K_F$ are linearly equivalent as divisors on $F$, it follows that 
\[
\deg f^\ast (K_S + D) = \deg g^\ast K_F + \deg f^\ast D
\]
Since $F$ and $D$ intersect transversally, we have that $\iota^{-1}(D) = \iota^\ast D$. Therefore, 
\[
\deg g^\ast K_F + \deg f^\ast D = \deg g^\ast K_F + \deg g^\ast (\iota^{-1}(D))
\]
Note that the finite morphism $g \colon C\to F$ induces a morphism $(C,f^{-1}(D))\to (F,\iota^{-1}(D))$ of orbifolds (which, in this case, are log-pairs). In particular, there is an injective pullback morphism $g^\ast \omega_F(\iota^{-1}(D)) \to \omega_C(f^{-1}(D))$, so that 
\[ 
\deg g^\ast K_F + \deg g^\ast (\iota^{-1}(D)) = \deg g^\ast \omega_F(\iota^{-1}(D)) \leq \deg \omega_C(f^{-1}(D)) = \deg K_C + \#f^{-1}(D).
\]
This proves the lemma.
\end{proof}

Using Jouanolou's theorem \cite{Jou78} we get the following improvement of the previous lemma for foliations on surfaces.  

\begin{lemma}\label{lemma:log_case}
Let $S$ be a smooth projective surface, let $D$ be an effective reduced divisor on $S$, and let $\mathcal{F}$ be a foliation on $S$. Then, there exists a proper closed subset $Z \subsetneq S$ such that, for any smooth projective curve $C$ and any non-constant morphism $f \colon C \to S$ tangent to $\mathcal{F}$ with $f(C) \not \subset Z$, the following inequality holds.
\[
\deg f^*(K_S+D) \leq 2g(C) - 2 + \# f^{-1}(D)
\]
\end{lemma}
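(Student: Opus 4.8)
The plan is to reduce to Lemma~\ref{lemma:triviality} by means of Jouanolou's theorem \cite{Jou78}, which provides the dichotomy: either $\mathcal{F}$ has only finitely many invariant algebraic curves on $S$, or $\mathcal{F}$ admits a rational first integral, i.e.\ there is a dominant rational map $\phi\colon S\ratmap B'$ to a smooth projective curve whose general fibres are leaves of $\mathcal{F}$ and such that every $\mathcal{F}$-invariant curve on $S$ is a component of a fibre of $\phi$. In either case, any non-constant $f\colon C\to S$ tangent to $\mathcal{F}$ has image an $\mathcal{F}$-invariant curve. If there are only finitely many such curves, take $Z$ to be their union: this is a proper closed subset of $S$, the hypothesis $f(C)\not\subset Z$ is then never satisfied, and there is nothing to prove.

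So assume $\mathcal{F}$ has a rational first integral $\phi$. First I would resolve: there is a smooth projective surface $S'$, a birational morphism $\mu\colon S'\to S$, and a surjective (hence flat) proper morphism $p\colon S'\to B$ onto a smooth projective curve $B$ extending $\phi\circ\mu$, so that the fibres of $p$ map under $\mu$ onto the closures of the fibres of $\phi$ and the general fibre of $p$ is smooth. Put $\overline{D}:=(\mu^{-1}\supp D)_{\mathrm{red}}$, apply Lemma~\ref{lemma:triviality} to $(S',\overline{D},B,p)$ to obtain a proper closed $Z'\subsetneq S'$, and set $Z:=\mu(Z')$; since $\mu$ is surjective and $\dim Z'<\dim S'=\dim S$, this is a proper closed subset of $S$. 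Now let $f\colon C\to S$ be non-constant and tangent to $\mathcal{F}$ with $f(C)\not\subset Z$. Then $F:=f(C)$ is a leaf whose strict transform $F'\subset S'$ is not contained in $Z'$ (else $F=\mu(F')\subset Z$); hence $F'$ is a smooth fibre of $p$, the map $\mu|_{F'}\colon F'\to F$ is the normalisation of $F$, and $f$ factors uniquely as $f=\mu\circ\tilde f$ with $\tilde f\colon C\to S'$ and $\tilde f(C)=F'$.

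Lemma~\ref{lemma:triviality} applied to $\tilde f$ gives $\deg\tilde f^{\,*}(K_{S'}+\overline{D})\le 2g(C)-2+\#\tilde f^{-1}(\overline{D})$, and since $\supp\overline{D}=\mu^{-1}\supp D$ we have $\#\tilde f^{-1}(\overline{D})=\#f^{-1}(D)$; so it suffices to show $\deg f^{\,*}(K_S+D)\le\deg\tilde f^{\,*}(K_{S'}+\overline{D})$. Writing $\mu^{*}(K_S+D)=(K_{S'}+\overline{D})+R$, the divisor $R$ is supported on the $\mu$-exceptional locus, with the coefficient of a $\mu$-exceptional prime $E$ equal to $-a(E)$ when $\mu(E)\notin\supp D$ and to $m(E)-1-a(E)$ otherwise, where $a(E)\ge1$ is the discrepancy of $E$ over $S$ and $m(E)\ge1$ is the multiplicity of $E$ in $\mu^{*}D$. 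Because $F'=\tilde f(C)$ is not $\mu$-exceptional, $\deg\tilde f^{\,*}R$ is a non-negative multiple of $(F'\cdot R)$ and $(F'\cdot E)\ge0$ for each exceptional $E$, so it is enough to know that $R\le 0$, i.e.\ that $m(E)\le a(E)+1$ for every $E$ lying over $\supp D$. This is exactly the statement that the pair $(S,D)$ is log canonical along such $E$, which is automatic when $\supp D$ has simple normal crossings --- the only case needed for the orbifold Bogomolov theorem of Section~\ref{section:bogomolovs_thm}. I expect this anti-effectivity of $R$, together with the correct enlargement of $Z$ needed to treat a genuinely arbitrary reduced boundary $D$ (for instance by building $\mu$ from a log resolution of $(S,D)$ as well and discarding the finitely many leaves along which the estimate could degenerate), to be the main technical point.
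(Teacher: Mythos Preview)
Your route via Jouanolou's dichotomy, a resolution $\mu\colon S'\to S$ on which the first integral becomes an honest fibration $p\colon S'\to B$, and then Lemma~\ref{lemma:triviality} on $(S',\overline D,p)$, is exactly what the paper's two-sentence proof intends; the paper simply writes ``$\mathcal{F}$ is a fibration'' and invokes Lemma~\ref{lemma:triviality} without spelling out the passage to $S'$. Your identification of the discrepancy term $R$ in $\mu^*(K_S+D)=(K_{S'}+\overline D)+R$ as the crux, and your argument that $R\le 0$ whenever $(S,D)$ is log canonical (in particular when $D$ is SNC), are both correct and give a complete proof in that case --- which is indeed all that is used in Section~\ref{section:bogomolovs_thm}.

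Your expectation that a further enlargement of $Z$ would handle an \emph{arbitrary} reduced $D$, however, is misplaced: the inequality is actually false in that generality. Take $S=\mathbb{P}^2$, let $\mathcal{F}$ be the radial foliation by lines through a fixed point $q$, and let $D=\ell_1+\ell_2+\ell_3$ be three distinct lines through $q$. For every leaf $\ell\notin\{\ell_1,\ell_2,\ell_3\}$ one has $(K_S+D)\cdot\ell=-3+3=0$, whereas $2g(\ell)-2+\#(\ell\cap D)=-2+1=-1$, so the desired inequality fails for \emph{every} such $\ell$ and no proper closed $Z\subsetneq\mathbb{P}^2$ can exclude them all. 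The obstruction is precisely the failure of $(S,D)$ to be log canonical at the base point $q$ of the first integral, a point lying on every leaf; on the blowup $\mathbb{F}_1$ the corresponding pair $(\mathbb{F}_1,\overline D)$ \emph{is} SNC and the inequality does hold there, but the positive coefficient of the exceptional section in $R$ prevents it from descending. So your log-canonical hypothesis is not a removable technicality but a genuine requirement for the statement as written.
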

\begin{proof}
If $\mathcal{F}$ has only finitely many compact leaves, then we define $Z$ to be the union of these compact leaves, so that the statement is vacuously true. If $\mathcal{F}$ has infinitely many compact leaves, then Jouanolou's theorem \cite{Jou78} implies that $\mathcal{F}$ is a fibration in which case the statement follows from Lemma \ref{lemma:triviality}.
\end{proof}
 
Recall that, for $(X,\Delta)$ a smooth orbifold, Campana defined the sheaf of symmetric differentials $S^n \Omega_{(X,\Delta)} := S^n \Omega^1_{(X,\Delta)}$ \cite[Section 2.5]{Ca11}.  

The key insight of Bogomolov was that the existence of symmetric differentials on a smooth projective surface can be used to bound the degree of a curve in terms of its genus. As the following lemma shows, in the setting of orbifolds, one can instead use Campana's symmetric differentials to such an end.

\begin{lemma} \label{thm:bogomolov1} 
Let $(X,\Delta)$ be a smooth projective orbifold surface of general type over $k$ such that the following conditions hold.
\begin{enumerate}
\item For every ample line bundle $A$, there is an integer $n \geq 1$ such that $\mathrm{H}^0(X, S^n \Omega_{(X,\Delta)} \otimes A^{-1}) \neq 0$.
\item The multiplicities of $\Delta$ are all finite.
\end{enumerate} Then $(X,\Delta)$ is pseudo-algebraically hyperbolic over $k$.
\end{lemma}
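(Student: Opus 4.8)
The strategy is to turn the hypothesis — the existence of a nonzero section $\omega \in \mathrm{H}^0(X, S^n\Omega_{(X,\Delta)} \otimes A^{-1})$ for each ample $A$ — into a foliation on $X$, then invoke the Riemann--Hurwitz-type bounds of Lemma \ref{lemma:log_case}. A symmetric differential of degree $n$ twisted down by an ample bundle is, after clearing the twist, a section of $S^n\Omega^1_X(\log\lceil\Delta\rceil)$ (or of an appropriate subsheaf encoding the orbifold multiplicities along $\Delta$); such a section cuts out, on the complement of its zero locus, an algebraic multi-foliation on $X$. Concretely, viewing $\omega$ as a (twisted) degree-$n$ homogeneous form on the total space of $T_X(-\log)$, one factors it generically into linear factors over the function field, each defining a foliation $\mathcal{F}$ on $X$ after a finite base change, or — staying on $X$ itself — one works with the single degree-$n$ form and the ``tangency'' condition it imposes. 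I would set $E$ to be the union of the closed subset $Z$ coming from Lemma \ref{lemma:log_case} (applied to each relevant foliation with $D = \lceil \Delta\rceil_{\mathrm{red}}$, the reduced support of $\Delta$), the zero locus of $\omega$, and $\supp\Delta$; the point is to produce a \emph{proper} closed subset, which uses that $\omega \neq 0$ and that $(X,\Delta)$ has finitely many ``bad'' loci.

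**Key steps.** First I would make precise the passage from $\omega$ to a foliation: the vanishing of a twisted symmetric differential of degree $n$ means that along any curve $f\colon C \to (X,\Delta)$ the pullback $f^*\omega$ is a section of $S^n\omega_C(f^{-1}(D)) \otimes f^*A^{-1}$, and since $f^*A$ has positive degree when $f$ is nonconstant, this section vanishes identically unless the degree of $\omega_C(f^{-1}(D))$ dominates $\deg f^*A$ — or $f$ is ``tangent'' to the locus where $\omega$ degenerates. More usefully: either the image of $f$ lies in the (proper) zero locus of $\omega$, or $f^*\omega = 0$ forces $f(C)$ to be a leaf of the foliation defined by $\omega$ (one of the finitely many foliations obtained by factoring). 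Second, for each such foliation $\mathcal{F}$, Lemma \ref{lemma:log_case} gives a proper closed $Z_\mathcal{F} \subsetneq X$ and the bound $\deg f^*(K_X + D) \le 2g(C) - 2 + \#f^{-1}(D)$. Third, I would convert this into a bound on $\deg_C f^*L$: because $(X,\Delta)$ is of general type, $K_X + \Delta$ is big, so $L' := K_X + \lceil\Delta\rceil$ differs from a fixed multiple of $L$ by an effective divisor away from a proper closed set; combining $\#f^{-1}(D) \le \deg f^*\mathcal{O}(D)$ with the orbifold tangency conditions (which make $f^{-1}(D)$ genuinely smaller, controlled by $K_X + \Delta$ rather than $K_X + D$) yields $\deg_C f^*L \le \alpha\cdot\mathrm{genus}(C)$ for a constant $\alpha$ depending only on $X,\Delta,L,E$. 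Finally I would assemble $E$ as the finite union of all the $Z_\mathcal{F}$, the zero locus of $\omega$, and $\supp\Delta$, check it is a proper closed subset, and conclude that $(X,\Delta)$ is algebraically hyperbolic modulo $E$, hence pseudo-algebraically hyperbolic.

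**Main obstacle.** The delicate point is the orbifold bookkeeping in step three: Lemma \ref{lemma:log_case} is stated for the log pair $(X, D)$ with $D$ reduced, but we need a bound in terms of $K_X + \Delta$ with the true (fractional/integral) multiplicities, so that the ``general type'' hypothesis on $(X,\Delta)$ — not on $(X,D)$, which may fail to be big — is what gets used. The resolution is that an orbifold morphism $f\colon C \to (X,\Delta)$ does not just pull $D$ back reduced; the tangency condition $t\cdot m(D_i) \ge m(\Delta_i)$ means each point of $f^{-1}(D_i)$ contributes with high ramification, so $\#f^{-1}(D_i) \le \tfrac{1}{m(\Delta_i)}\deg f^* D_i + (\text{ramification correction})$, and feeding this back into the Riemann--Hurwitz inequality produces exactly $K_X + \Delta$ (equivalently, one should run Lemma \ref{lemma:log_case}'s proof directly on the orbifold curve $(C, f^{-1}(\Delta))$ via the injective pullback $g^*\omega_F(\lceil\Delta_F\rceil) \hookrightarrow \omega_C(\cdots)$, tracking multiplicities). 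I also expect some care is needed when $\omega$, factored over the function field of $X$, only defines a foliation after a finite cover — one must check the cover can be chosen so that the degree and genus bounds descend, but this is a standard maneuver (Riemann--Hurwitz on the cover plus the fact that a dominant map contracts degrees and genus by bounded factors).
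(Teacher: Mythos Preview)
Your overall strategy is right and closely parallels the paper's, but there is one structural difference worth highlighting and two points where your write-up is muddled.

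\textbf{Lifting versus factoring.} You propose to factor $\omega$ into linear forms over $\overline{k(X)}$, obtaining finitely many foliations on $X$ after a finite cover, and then apply Lemma~\ref{lemma:log_case} on $X$ (or the cover). The paper avoids the cover entirely by lifting to $Y=\mathbb{P}(T_X(-\log\lceil\Delta\rceil))$: there $\omega$ is a section of $\mathcal{O}_Y(n)\otimes\pi^*L^{-1}$, its zero divisor is a single surface $S\subset Y$ defined over $k$, and the tautological rank-two distribution on $Y$ restricts to a rank-one foliation $\mathcal{F}$ on $S$. A curve $f\colon C\to X$ with $f^*\omega=0$ lifts via its derivative to a curve in $S$ tangent to $\mathcal{F}$, and Lemma~\ref{lemma:log_case} is applied on a desingularization $\tilde S$ of $S$. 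This buys exactly what you flag in your last paragraph: no monodromy of the linear factors, no finite cover, no descent of genus bounds. Your cover $X'$ is morally the Stein factorization of $S\to X$, so the two approaches are equivalent in content, but the projectivized-bundle packaging is cleaner.

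\textbf{The dichotomy.} Your case split is garbled. The ``zero locus of $\omega$'' on $X$ is a red herring: $\omega$ is a section of a rank-$(n{+}1)$ bundle on a surface, so its vanishing locus on $X$ is generically finite and plays no role. The clean dichotomy is simply: either $f^*\omega\neq 0$, giving a direct degree bound, or $f^*\omega=0$, forcing tangency. You state the first half and then seem to lose it in the ``more usefully'' sentence.

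\textbf{The pullback.} You write $f^*\omega\in \mathrm{H}^0(C,S^n\omega_C(f^{-1}(D))\otimes f^*A^{-1})$, but the point of Campana's orbifold symmetric differentials is precisely that for an orbifold morphism $f\colon C\to(X,\Delta)$ one has $f^*S^n\Omega_{(X,\Delta)}\subset S^n\omega_C$ with \emph{no} boundary term (this is \cite[Prop.~2.11]{Ca11}, which the paper invokes). So in the non-vanishing case one gets $\deg f^*L\le n(2g(C)-2)$ immediately. Your version with $f^{-1}(D)$ would leave an extra $\#f^{-1}(D)$ to absorb and make this case unnecessarily hard.

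Your orbifold bookkeeping in step three, converting the bound on $K_X+D$ from Lemma~\ref{lemma:log_case} into one on $K_X+\Delta$ via $\#f^{-1}(D_i)\le \tfrac{1}{m_i}\deg f^*D_i$, is exactly what the paper does; and the paper likewise chooses the ample $L$ at the outset as a multiple of $K_X+\Delta-D_0$ with $D_0$ effective, so that the final inequality reads $\deg f^*L\le m(2g(C)-2)$.
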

\begin{proof} 
Since $\Delta$ has only finite multiplicities, to prove that $(X,\Delta)$ is pseudo-algebraically hyperbolic, it suffices to test on maps $C\to (X,\Delta)$ with $C$  smooth and  \emph{projective}. 

Let $D_0$ be an effective $\mathbb{Q}$-divisor such that $L_0 := K_X + \Delta - D_0$ is ample. Let $m$ be a positive integer such that $L:=mL_0$ is a divisor (with integer coefficients). By assumption, there exists an integer $n\geq 1$ and a non-zero orbifold symmetric differential $\omega$ in $\mathrm{H}^0(X, S^n \Omega_{(X,\Delta)} \otimes L^{-1})$.    

We consider the projectivized tangent bundle
\[
\pi \colon Y:=\mathbb{P}(T_X(-\log \lceil\Delta \rceil)) \to X.
\]
Note that $\omega$ corresponds to a global section of $\mathcal{O}_Y(n)\otimes \pi^*L^{-1}$. Let $S \subset Y$ be the zero divisor of $\omega$.  Let $D$ be the support of the restriction of  $\pi^\ast \lceil \Delta \rceil$ to $S$. 

Consider the tautological holomorphic foliation $\mathcal{F}$ of rank $1$ on $S$ induced by the subbundle $V \subset T_Y(-\log \pi^\ast \lceil\Delta \rceil)$ such that
\[
V_{x, [v]}:= \{\xi \in T_Y(-\log \pi^\ast \lceil\Delta \rceil) \ | \ d\pi(\xi) \in \mathbb{C}\cdot v\}.
\]
Let $\psi \colon \widetilde{S} \to S$ be a desingularization, let $\widetilde{D} $ be  the reduced divisor with support $\psi^{-1}(D)$, and let $\widetilde{\mathcal{F}}$ be the induced foliation on $\widetilde{S}$. Let $Z_1 \subset S$ be the exceptional locus of $\psi$.

If $C$ is a smooth projective curve and $f \colon C \to (X,\Delta)$ is a non-constant orbifold morphism such that $f^*\omega \neq 0$, then $f^*\omega$ is a non-zero global section of $S^n \Omega_C \otimes f^\ast L^{-1}$ \cite[Proposition 2.11]{Ca11}, so that 
\[
\deg f^\ast L \leq n \deg K_C = n(2g(C)-2).  
\]

Thus, it remains to treat the case where $f^*\omega = 0$. In this case, the morphism $f \colon C \to X$ factors over $\pi|_S \colon S \to X$ and is tangent to the foliation $\mathcal{F}$ on $S$ (i.e., is contained in a compact leaf of $\mathcal{F}$).

By Lemma \ref{lemma:log_case}, there exists a proper closed subset $\widetilde{Z} \subset \widetilde{S}$ such that, if $\widetilde{f} \colon C \to \widetilde{S}$ is a lift of $f \colon C \to X$ with $\widetilde{f}(C)\not\subset \widetilde{Z}$, then the following inequality holds.
\[
\deg \widetilde{f}^*(K_{\widetilde{S}}+\widetilde{D}) \leq 2g(C)-2+\#\widetilde{f}^{-1}(\widetilde{D})
\]
Define $Z$ to be the union of  $\pi(Z_1)$,  $\pi(\psi(\widetilde{Z}))$,  $ \supp D_0$, and the branch locus of $\widetilde{S}\to X$.  Note that $Z \subsetneq X$ is a proper closed subset of $X$. 

Now, for every morphism $f \colon C \to X$ with $f(C) \not\subset Z$, the inequality  
\[
\deg f^*(K_X+\lceil \Delta \rceil) \leq 2g(C) - 2 + \# f^{-1}( \lceil \Delta \rceil)
\]
holds. To prove this, note that such a morphism $f$ factors uniquely through a morphism $\widetilde{f} \colon C \to \widetilde{S}$ with $\widetilde{f}(C) \not \subset \widetilde{Z}$ and that, by Riemann--Hurwitz for surfaces, the difference \[K_{\widetilde{S}}+\widetilde{D} - (\pi\circ \psi)^\ast(K_X + \lceil \Delta\rceil)\] is effective.    In particular,   since  $Z$ contains the branch locus of  $\widetilde{S}\to X$, the pullback of this difference  along $\widetilde{f}$ has nonnegative degree on $C$.

Since $f \colon C \to (X,\Delta)$ is an orbifold morphism, we obtain 
\[
\deg f^*(K_X+\Delta) \leq 2g(C)-2.
\]
Finally, since $L_0 = K_X + \Delta - D_0$, we conclude that
\[
\deg f^* L_0 \leq \deg f^\ast (K_X+\Delta) \leq 2g(C) - 2.
\]
As $L=mL_0$, this implies 
\[
\deg f^* L \leq m(2g(C) - 2).
\]
This concludes the proof.
\end{proof} 

The following existence lemma for symmetric differentials is due to Bogomolov in the setting that the orbifold divisor is empty or logarithmic (i.e., all of its non-trivial multiplicities are infinite); see \cite[Corollary~10.11]{BogomolovHol}. If all multiplicities of $\Delta$ are finite, then the analogous existence result is proven in \cite[Corollary~5.4]{Rou12}. In general (when $\Delta$ has infinite and finite multiplicities), as we show now, the existence of symmetric differentials can be proven using a simple ``perturbation'' argument:

\begin{lemma} \label{thm:bogomolov2} 
If $(X,\Delta)$ is a smooth projective orbifold surface of general type satisfying $c_1(X,\Delta)^2 > c_2(X,\Delta)$ and $A$ is an ample line bundle on $X$, then there is an integer $n_0$ such that, for every $n \geq n_0$, we have that $\mathrm{H}^0(X, S^n \Omega_{(X,\Delta)} \otimes A^{-1}) \neq 0$.    
\end{lemma}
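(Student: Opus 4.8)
The plan is a perturbation argument that reduces the statement to the two cases already available in the literature, namely Bogomolov's \cite[Corollary~10.11]{BogomolovHol} when $\Delta$ is logarithmic and Rousseau's \cite[Corollary~5.4]{Rou12} when all multiplicities of $\Delta$ are finite. Write $\Delta = \Delta^{\mathrm{fin}} + \Delta^{\infty}$, where $\Delta^{\infty}$ is the reduced sum of the components of $\supp\Delta$ occurring in $\Delta$ with coefficient $1$ (multiplicity $\infty$) and $\Delta^{\mathrm{fin}}$ collects the components with finite multiplicity. For each integer $N \geq 2$ set
\[
\Delta^{(N)} := \Delta^{\mathrm{fin}} + \left(1-\tfrac{1}{N}\right)\Delta^{\infty},
\]
so that $(X,\Delta^{(N)})$ is a smooth projective orbifold surface all of whose multiplicities are finite, with $\Delta^{(N)} \leq \Delta$ and $\Delta^{(N)} \to \Delta$ coefficientwise as $N \to \infty$; note also that $\supp \Delta^{(N)} = \supp \Delta$.

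First I would check that the numerical hypotheses survive the perturbation for $N$ large. By Definition \ref{definition:chernclasses}, the quantity $c_1(X,\Delta^{(N)})^2 = (K_X+\Delta^{(N)})^2$ and the quantity $c_2(X,\Delta^{(N)})$ depend on $N$ only through the coefficients $1-\tfrac{1}{m_i^{(N)}}$ and $1-\tfrac{1}{m_i^{(N)}m_j^{(N)}}$ (the intersection numbers $(K_X+\Delta_i).\Delta_i$ and $\Delta_i.\Delta_j$ being fixed), and these coefficients are rational functions of $1/N$ tending as $N\to\infty$ to the corresponding coefficients of $\Delta$; hence $c_1(X,\Delta^{(N)})^2 \to c_1(X,\Delta)^2$ and $c_2(X,\Delta^{(N)}) \to c_2(X,\Delta)$ in $\RR$, so the strict inequality $c_1(X,\Delta)^2 > c_2(X,\Delta)$ forces $c_1(X,\Delta^{(N)})^2 > c_2(X,\Delta^{(N)})$ for all $N \gg 0$. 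Likewise $K_X+\Delta^{(N)} = (K_X+\Delta) - \tfrac{1}{N}\Delta^{\infty}$, and since $K_X+\Delta$ lies in the open big cone of $\NS(X)_{\RR}$, the class $K_X+\Delta^{(N)}$ is big for $N \gg 0$, i.e.\ $(X,\Delta^{(N)})$ is again of general type. Fixing such an $N$, \cite[Corollary~5.4]{Rou12} (the finite-multiplicity case) produces an integer $n_0$ with $\mathrm{H}^0(X, S^n\Omega_{(X,\Delta^{(N)})} \otimes A^{-1}) \neq 0$ for all $n \geq n_0$. Finally, Campana's orbifold cotangent sheaves are monotone in the orbifold divisor: for $\Delta' \leq \Delta''$ one has an inclusion $S^n\Omega_{(X,\Delta')} \subseteq S^n\Omega_{(X,\Delta'')}$ inside $S^n\Omega^1_X(\log\,(\supp\Delta'')_{\mathrm{red}})$ for every $n \geq 1$; applying this to $\Delta^{(N)} \leq \Delta$, twisting by $A^{-1}$ and taking global sections gives $\mathrm{H}^0(X, S^n\Omega_{(X,\Delta)} \otimes A^{-1}) \supseteq \mathrm{H}^0(X, S^n\Omega_{(X,\Delta^{(N)})} \otimes A^{-1}) \neq 0$ for all $n \geq n_0$, which is the assertion.

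The main obstacle is the last step, the monotonicity $S^n\Omega_{(X,\Delta^{(N)})} \subseteq S^n\Omega_{(X,\Delta)}$. It is intuitively evident — in the standard strict-normal-crossings local model $\supp\Delta = \{z_1\cdots z_r = 0\}$, increasing a multiplicity only relaxes the pole/vanishing order imposed on a local orbifold symmetric differential — but one must set it up uniformly in the degree $n$. In particular one should work with Campana's definition of $S^n\Omega_{(X,\Delta)}$ for arbitrary $n$ (using the correct rounding of the fractional multiplicities), not merely for $n$ divisible by all the multiplicities, so that the conclusion genuinely holds for every $n \geq n_0$ rather than just along an arithmetic progression. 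The remaining ingredients — continuity of the Chern numbers in the multiplicities and openness of the big cone — are routine.
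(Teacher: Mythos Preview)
Your proposal is correct and is essentially the same argument as the paper's: decompose $\Delta$ into its finite- and infinite-multiplicity parts, replace the coefficient $1$ by $1-\tfrac{1}{N}$ on the latter, use continuity of the Chern numbers and openness of the big cone to preserve the hypotheses, apply \cite[Corollary~5.4]{Rou12}, and conclude via the inclusion $S^n\Omega_{(X,\Delta^{(N)})}\subseteq S^n\Omega_{(X,\Delta)}$. Your discussion of the monotonicity step is in fact more careful than the paper's, which simply asserts the inclusion.
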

\begin{proof}
We may decompose $\Delta$ into a ``finite'' and ``infinite'' part. More precisely, if $\Delta = \sum_i (1-\frac{1}{m_i})\Delta_i$, we define $\Delta^{\log} = \sum_{i, m_i =\infty} \Delta_i$. Define $\Delta^{\mathrm{fin}} = \Delta - \Delta^{\log}$. (In other words, $\Delta^{\log} = \lfloor \Delta \rfloor$ and $\Delta^{\mathrm{fin}} = \Delta - \lfloor \Delta \rfloor$.)

Define $\Delta_m = \Delta^{\mathrm{fin}} + \left(1-\frac{1}{m}\right) \Delta^{\log}$. Since $K_X+\Delta$ is big, it follows that $K_X+\Delta_m$ is big for all sufficiently large $m$ (see \cite[Proposition~6.1]{BCJW}).  
Moreover, from the explicit formulas given in Definition \ref{definition:chernclasses}, it is clear that for all sufficiently large $m$, the inequality $c_1(X,\Delta_m)^2 > c_2(X,\Delta_m)$ continues to hold. Thus, for $m$ large enough, we have that $(X,\Delta_m)$ is of general type and satisfies $c_1(X,\Delta_m)^2 > c_2(X,\Delta_m)$. By applying \cite[Corollary~5.4]{Rou12} to $(X, \Delta_m)$, we have that $\mathrm{H}^0(X, S^n \Omega_{(X,\Delta_m)} \otimes A^{-1}) \neq 0$. Since $S^n \Omega_{(X,\Delta_m)} \subset  S^n \Omega_{(X,\Delta)}$, we obtain that $\mathrm{H}^0(X, S^n \Omega_{(X,\Delta)} \otimes A^{-1}) \neq 0$, as required.   
\end{proof}
 
We can now prove Bogomolov's theorem for orbifold surfaces stated below (Theorem \ref{thm:bogomolov} in the introduction).  As pointed out by McQuillan \cite[p.~122]{McQuillan}, this result is proven implicitly in Bogomolov's seminal paper \cite{Bogomolov, DeschampsBourbaki} when the orbifold divisor $\Delta$ is empty.

\begin{theorem}\label{thm:bogomolov_text}
If $(X,\Delta)$ is a smooth projective orbifold surface of general type  and $c_1(X,\Delta)^2>c_2(X,\Delta)$, then $(X, \Delta)$ is pseudo-algebraically hyperbolic over $k$.
\end{theorem} 
\begin{proof} 
If $\Delta$ has only finite multiplicities, combine Lemma \ref{thm:bogomolov1} and Lemma \ref{thm:bogomolov2}.  We now reduce to the case of finite multiplicities.  Indeed,    following the notation and arguments of  the proof of Lemma \ref{thm:bogomolov2},  we choose a large enough integer $m$ such that the ``perturbation''  $(X,\Delta_m)$ of $(X,\Delta)$  is    of general type and  satisfies $c_1(X,\Delta_m)^2 > c_2(X,\Delta_m)$. Since $\Delta_m$ has only finite multiplicities, it follows that $(X,\Delta_m)$, and thus $(X,\Delta)$,  is pseudo-algebraically hyperbolic.  
\end{proof}

\section{A cutting argument and the proof of Theorem \ref{thm:main2}}\label{section:proof_of_thm}

To conclude the proof of our    finiteness theorem for orbifold surfaces of general type with $c_1(X,\Delta)^2>c_2(X,\Delta)$, we take very general hyperplane sections to reduce to the case of curves:  

\begin{lemma}[Cutting argument]\label{lemma:cutting_argument}
Let $(X,\Delta)$ be a smooth projective orbifold over a finitely generated field $K$ of characteristic zero, and let $Z\subset X$ be a proper closed subset. Assume that, for every finitely generated field extension $L/K$ and every smooth quasi-projective curve $C$ over $L$, the set of non-constant orbifold maps $f \colon C\to (X_L,\Delta_L)$ with $f(C)\not\subset Z$ is finite. Then, for every finitely generated field extension $M/K$ and every smooth quasi-projective variety $V$ over $M$, the set of non-constant orbifold near-maps $f \colon V \to (X_M,\Delta_M)$ with $f(V)\not\subset Z_M$ is finite.
\end{lemma}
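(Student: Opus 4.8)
The plan is to induct on $\dim V$, the base case $\dim V = 1$ being precisely the hypothesis. So assume $\dim V = d \geq 2$ and that the statement is known for all smooth quasi-projective varieties of dimension $< d$ over any finitely generated extension of $k$. Fix a projective compactification $\overline V$ of $V$ and a very ample line bundle on $\overline V$. Since an orbifold near-map $f \colon V \ratmap (X_M,\Delta_M)$ is defined on a big open $U \subseteq V$, and since (after a further compactification) $X$ is projective, $f$ extends to a near-map on $V$; what matters for us is that $f$ restricted to $U$ is an honest orbifold morphism $U \to (X_M,\Delta_M)$ not factoring through $Z_M$. The idea is to slice $V$ by a very general hyperplane section $W = V \cap H$: by Bertini, $W$ is a smooth quasi-projective variety of dimension $d-1$ over some finitely generated extension $M'/M$ (the function field of the relevant component of the dual-variety parameter space, or more simply we take $H$ defined over a finitely generated extension of $k$ containing $M$), and by the induction hypothesis the set of non-constant orbifold near-maps $W \ratmap (X_{M'},\Delta_{M'})$ avoiding $Z$ is finite.

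The core of the argument is then a \emph{rigidity/spreading-out} step: I want to show that a non-constant orbifold near-map $f \colon V \ratmap (X_M,\Delta_M)$ with $f(V) \not\subset Z_M$ is determined by its restriction to a single very general hyperplane section. For this, consider the family of hyperplane sections: for $H$ in a Zariski-dense open subset of the dual projective space, $W_H := V \cap H$ is smooth and irreducible and $f|_{W_H}$ is a non-constant orbifold near-map avoiding $Z$ (non-constancy for very general $H$ follows because the indeterminacy/contracted locus of $f$ is a proper closed subset, so a general $W_H$ is not contained in it; here one uses $d \geq 2$ so that $W_H$ has positive dimension and genuinely "sees" the map). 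By the induction hypothesis applied over the generic point of the parameter space — more carefully, by a standard spreading-out over the dual projective space combined with the finiteness over every finitely generated field — the restrictions $f|_{W_H}$ fall into finitely many "types" as $H$ varies, and in fact for $f$ fixed they vary in an algebraic family over the (irreducible) parameter space whose image in the relevant Hom-scheme is a single point after passing to the generic hyperplane: this forces $f|_{W_H}$ to be "the same map" in the sense that two near-maps $f, f'$ on $V$ with $f|_{W_H} = f'|_{W_H}$ for a very general $H$ must agree. That last implication is elementary once one knows it on a dense open: since the very general hyperplane sections through a fixed very general point cover a dense open of $V$, and $f, f'$ agree on each of them, $f = f'$ as rational maps.

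It remains to bound the number of possible restrictions, which is where the finiteness hypothesis is used in earnest. The set $\{\,f|_{W_{H_0}} : f \text{ as above}\,\}$ for a \emph{fixed} very general $H_0$ is a subset of the non-constant orbifold near-maps $W_{H_0} \ratmap (X,\Delta)$ avoiding $Z$ over the appropriate finitely generated field, hence finite by the induction hypothesis; and by the rigidity step the assignment $f \mapsto f|_{W_{H_0}}$ is injective. Therefore the set of admissible $f$ is finite, completing the induction. The main obstacle I anticipate is making the rigidity/spreading-out step fully rigorous over a non-algebraically-closed finitely generated field: one must choose the hyperplane $H_0$ defined over a controlled finitely generated extension $M'/k$ (so that the induction hypothesis literally applies to $W_{H_0}$ over $M'$), handle the fact that "very general" is an intersection of countably many dense opens (so one picks $H_0$ with the needed genericity using that $k$ — hence $M$ — has infinitely many points after a suitable finite extension, or better, works with the generic hyperplane section over the function field of the dual space, which is itself finitely generated over $k$), and verify that non-constancy and the orbifold tangency conditions are preserved under this restriction — the latter following from Lemma 3.13 (restriction of an orbifold morphism along a closed immersion of normal varieties is again an orbifold morphism) applied to $W_{H_0} \hookrightarrow V$, provided $f(W_{H_0}) \not\subset \supp\Delta$, which again holds for very general $H_0$ since $f(V) \not\subset \supp\Delta$.
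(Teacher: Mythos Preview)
Your overall strategy (induction on $\dim V$, restriction to a hyperplane section, injectivity of restriction) is the same as the paper's, but the implementations differ at the key step. The paper argues by contradiction: given infinitely many pairwise distinct $f_i$, it passes to an uncountable algebraically closed field $\Omega \supset M$, chooses a point $P \in V(\Omega)$ with $f_i(P) \neq f_j(P)$ for all $i \neq j$ (possible since $\Omega$ is uncountable), takes a very general hyperplane $H$ through $P$, and then descends $P$ and $H$ to a finitely generated extension $M_2/M$; the restrictions $f_i|_{\mathcal H}$ are pairwise distinct simply because they differ at $P$. Your preferred alternative---working directly with the generic hyperplane section over the function field $M'$ of the dual projective space, which is finitely generated over $k$---is valid and avoids the uncountable-field detour: if two maps defined over $M$ agree on the generic hyperplane section, then by spreading out they agree on a Zariski-dense family of hyperplane sections and hence on all of $V$. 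Two cautions: your other suggestion (choosing a very general closed point $H_0$ over a finite extension of $M$) does not work as stated, since a finitely generated field is countable and cannot supply a point avoiding countably many proper closed subsets; and your ``rigidity'' paragraph is muddled---the talk of ``finitely many types'' and a ``Hom-scheme'' does not parse because the domains $W_H$ vary with $H$, and what you actually need is the elementary spreading-out argument just described. For preserving the orbifold condition under restriction to $W$, the paper invokes \cite[Lemma~2.5]{BJ}, which treats near-maps; the lemma you cite is stated only for honest morphisms.
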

\begin{proof}
We combine the arguments in the proofs of \cite[Lemma~2.4]{JMRL} and \cite[Theorem~5.4]{BJ}. 

We argue by induction on $d:=\dim V$. If $d = 1$, then the required conclusion holds by assumption.  
Now, suppose that $d>1$. To prove the desired conclusion, let $M/K$ be a finitely generated field extension and $V$ a smooth quasi-projective variety over $M$ such that there is an infinite sequence of pairwise distinct orbifold maps $f_i \colon V\to (X_M,\Delta_M)$. Let $V\subset \mathbb{P}^n_M$ be an immersion. Let $M\subset \Omega$ be an uncountable algebraically closed field containing $M$,  and let $P\in V(\Omega)$ be such that $f_i(P)\neq f_j(P)$ for all $i\neq j$. Now, let $H\subset V_\Omega\subset \mathbb{P}^n_\Omega$ be a very general hyperplane section containing $P$. Since the restriction $f_i|_H \colon H \ratmap (X_\Omega, \Delta_\Omega)$ is defined at all points of codimension one of $H$ and does not factor over $\supp \Delta$, it follows from \cite[Lemma~2.5]{BJ} that $f_i|_H \colon H\ratmap (X,\Delta)$ is an orbifold near-map. Moreover, since $H\subset V_{\Omega}$ is very general, it follows that every near-map $f_i|_H$ is non-constant and that $f_i(H)\not\subset Z_{\Omega}$. We now descend $P$ and $H$ to a finitely generated extension of $M$.  Thus, let $M\subset M_2\subset \Omega$ be a finitely generated field extension of $M$ contained in $\Omega$ such that $P \in V(\Omega)$ lies in $V(M_2)$ and such that there is a hyperplane section $H_2\subset \mathbb{P}^n_{M_2}$ with $H_2\otimes_{M_2} \Omega = H$ (i.e., $H_2$ is a model for $H$ over $M_2$). Then, for every $i$, the morphism $f_i|_{H_2} \colon H_2 \ratmap (X_{M_2}, \Delta_{M_2})$ is a non-constant orbifold near-map with $f_i(H_2)\not\subset Z_{M_2}$. Also, the morphisms $f_i|_{H_2}$ are pairwise distinct (as they differ at $P$). As $\dim H_2 < d$, this contradicts the induction hypothesis and concludes the proof.
\end{proof}

\begin{corollary}\label{corollary:finiteness} 
Let $X$ be a Kodaira dimension one smooth projective surface over a finitely generated field $K$ of characteristic zero whose elliptic fibration is non-isotrivial. Let $Z\subset X_{\overline{K}}$ be a proper closed subset. Let $\Delta$ be an orbifold divisor on $X$ such that $(X,\Delta)$ is of general type and algebraically hyperbolic modulo $Z$. Then, for every finitely generated field extension $L/K$ and smooth quasi-projective variety $V$ over $L$, the set of orbifold near-maps $f \colon V \ratmap (X_L,\Delta_L)$ with $f(V_{\overline{L}}) \not \subset Z_{\overline{L}}$ is finite.  
\end{corollary}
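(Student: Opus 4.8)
The plan is to deduce Corollary \ref{corollary:finiteness} from the curve case, Theorem \ref{thm:finiteness_for_orbifold_surface1}, by feeding it into the cutting argument of Lemma \ref{lemma:cutting_argument}. The only genuine work lies in arranging the hypotheses: descending the exceptional locus $Z$ to a finitely generated field, and checking that the relevant properties of $(X,\Delta)$ are insensitive to enlarging the base field.

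\emph{Descent of $Z$.} Since $Z \subset X_{\overline{K}}$ is a proper closed subset and $\overline{K}$ is an algebraic closure of $K$, there is a finite extension $K'/K$ and a proper closed subset $Z' \subset X_{K'}$ with $Z'_{\overline{K}} = Z$. As $K'$ is again finitely generated over $\mathbb{Q}$, I would replace $K$ by $K'$ and $Z$ by $Z'$; this affects no hypothesis, since ``Kodaira dimension one'', ``non-isotrivial elliptic fibration'', ``of general type'' and ``algebraically hyperbolic modulo $Z$'' are all unchanged under finite base extension. After this reduction $Z \subset X$ is a proper closed subset over the finitely generated field $K$, and for any extension $L/K$ and any near-map $f \colon V \ratmap X_L$ one has $f(V_{\overline{L}}) \not\subset Z_{\overline{L}}$ if and only if $f(V) \not\subset Z_L$.

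\emph{The curve case over every finitely generated extension.} Fix a finitely generated field extension $L/K$. Since ``algebraically hyperbolic modulo $Z$'' implies ``$1$-bounded modulo $Z$'', and since being of general type, of Kodaira dimension one, carrying a non-isotrivial elliptic fibration, and being $1$-bounded modulo $Z$ all persist after base change to $L$ (we are in characteristic zero), the data $(X_L,\Delta_L,Z_L)$ again satisfy the hypotheses of Theorem \ref{thm:finiteness_for_orbifold_surface1}. Applying that theorem over $L$ yields: for every smooth quasi-projective curve $C$ over $L$, the set of non-constant orbifold morphisms $f \colon C \to (X_L,\Delta_L)$ with $f(C)\not\subset Z_L$ is finite. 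This is precisely the hypothesis of Lemma \ref{lemma:cutting_argument} for $(X,\Delta)$ over $K$ with the proper closed subset $Z$. The lemma then gives: for every finitely generated $L/K$ and every smooth quasi-projective variety $V$ over $L$, the set of non-constant orbifold near-maps $f \colon V \ratmap (X_L,\Delta_L)$ with $f(V)\not\subset Z_L$ is finite, which by the previous paragraph is the asserted statement.

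\emph{Main obstacle.} The deduction is mostly bookkeeping; the one point that needs care is the base-change invariance invoked above, namely that the degree bound defining ``$1$-bounded modulo $Z$'' (equivalently ``algebraically hyperbolic modulo $Z$'') survives passage from $K$ to an arbitrary finitely generated $L/K$. I would settle this by the standard spreading-out and specialization argument: a smooth projective curve over $L$ carrying a map to $X_L$ that violates the bound admits a model over a finitely generated subring, and specializing it produces a counterexample already over $\overline{K}$ — which is harmless since the bound is a geometric statement — contradicting the hypothesis over $K$. A secondary, purely cosmetic point is that ``orbifold near-maps'' in the statement must be read as ``non-constant orbifold near-maps'' (a constant near-map avoiding $Z$ is just a point of $X\setminus Z$, of which there are typically infinitely many), in line with the phrasing of Lemma \ref{lemma:cutting_argument}.
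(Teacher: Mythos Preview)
Your proposal is correct and follows the paper's own proof, which simply reads ``Combine Theorem \ref{thm:finiteness_for_orbifold_surface1} and Lemma \ref{lemma:cutting_argument}.'' You have merely unpacked the bookkeeping (descent of $Z$, persistence of the hypotheses under field extension, the implicit ``non-constant'') that the paper leaves to the reader; no genuinely different idea is involved.
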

\begin{proof}
Combine Corollary \ref{cor:finiteness_for_orbifold_surface1} and Lemma \ref{lemma:cutting_argument}.
\end{proof}

\begin{remark} 
The assumption that $K$ is finitely generated can not be dropped in Corollary \ref{corollary:finiteness}, as the desired conclusion fails over algebraically closed fields by Remark \ref{remark:false}.
\end{remark} 

\begin{theorem}\label{thm:Bdelta_general}
Let $(B,\Delta)$ be a smooth projective orbifold of general type over a finitely generated field $K$ of characteristic zero, where $B$ is a Kodaira dimension one surface with non-isotrivial elliptic fibration. If $c_1(B,\Delta)^2 > c_2(B,\Delta)$, then the following statements hold.
\begin{enumerate}
\item There is a proper closed subset $Z\subset B_{\overline{K}}$ such that the smooth projective orbifold $(B_{\overline{K}},\Delta_{\overline{K}})$ is algebraically hyperbolic modulo $Z$ over $\overline{K}$.
\item If $L/K$ is a finitely generated field extension and $V$ is smooth variety over $L$, then the set of orbifold near-maps $f \colon V \ratmap (B_L,\Delta_L)$ with $f(V_{\overline{L}}) \not \subset Z_{\overline{L}}$ is finite.  
\end{enumerate} 
\end{theorem}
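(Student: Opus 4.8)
The plan is to reduce both assertions to results already established in the paper: the orbifold Bogomolov theorem (Theorem \ref{thm:bogomolov}) for part (1), and the finiteness statement for near-maps into non-isotrivial Kodaira-dimension-one surfaces (Corollary \ref{corollary:finiteness}) for part (2). No genuinely new argument is needed; the substance lies entirely in these inputs, so the only thing to be careful about is checking that the relevant hypotheses survive the passage to $\overline{K}$.

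For part (1), I would first base change along $K \hookrightarrow \overline{K}$. In characteristic zero, $(B_{\overline{K}}, \Delta_{\overline{K}})$ is again a smooth projective orbifold surface; it is again of general type because bigness of $K_B + \Delta$ is unaffected by extension of the base field; and the degrees $c_1(B,\Delta)^2$ and $c_2(B,\Delta)$ are computed from intersection numbers of cycle classes, hence are unchanged, so the strict inequality $c_1(B_{\overline{K}},\Delta_{\overline{K}})^2 > c_2(B_{\overline{K}},\Delta_{\overline{K}})$ persists. Theorem \ref{thm:bogomolov}, applied with base field $\overline{K}$, then yields that $(B_{\overline{K}}, \Delta_{\overline{K}})$ is pseudo-algebraically hyperbolic, i.e., there is a proper closed subset $Z \subsetneq B_{\overline{K}}$, which may be taken to contain $\Delta_{\overline{K}}$, such that $(B_{\overline{K}}, \Delta_{\overline{K}})$ is algebraically hyperbolic modulo $Z$ over $\overline{K}$. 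This is exactly the assertion of (1).

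For part (2), I would simply invoke Corollary \ref{corollary:finiteness} with $X := B$ (a smooth projective surface of Kodaira dimension one over the finitely generated field $K$ with non-isotrivial elliptic fibration), orbifold divisor $\Delta$, and the proper closed subset $Z \subset B_{\overline{K}}$ produced in part (1). Its hypotheses hold: $(B,\Delta)$ is of general type by assumption, and $(B_{\overline{K}},\Delta_{\overline{K}})$ is algebraically hyperbolic modulo $Z$ by part (1). The conclusion of that corollary is precisely that for every finitely generated field extension $L/K$ and every smooth quasi-projective variety $V$ over $L$, the set of orbifold near-maps $f \colon V \ratmap (B_L,\Delta_L)$ with $f(V_{\overline{L}}) \not\subset Z_{\overline{L}}$ is finite, which is part (2).

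Since the heavy lifting — the orbifold Bogomolov inequality producing symmetric differentials, the one-dimensionality of the relevant Hom-schemes, and the Faltings/Shafarevich input via the Kodaira-dimension-one analysis — has already been carried out in Sections \ref{section:dimension_of_moduli}--\ref{section:bogomolovs_thm}, there is no real obstacle left at this point: the argument is an assembly step. The only point that deserves a moment's attention is the one highlighted above, namely that ``of general type'', the Segre-class inequality, and ``algebraically hyperbolic modulo $Z$'' all behave well under the base change $K \hookrightarrow \overline{K}$, so that Theorem \ref{thm:bogomolov} and Corollary \ref{corollary:finiteness} can be chained together.
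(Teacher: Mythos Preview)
Your proposal is correct and matches the paper's own proof, which simply reads ``Combine Theorem~\ref{thm:bogomolov} and Corollary~\ref{corollary:finiteness}.'' Your additional remarks on why the general-type and Chern-class hypotheses persist after base change to $\overline{K}$ are accurate and make explicit what the paper leaves implicit.
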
   
\begin{proof}
Combine Theorem \ref{thm:bogomolov} and Corollary \ref{corollary:finiteness}.
\end{proof}
Note that Theorem \ref{thm:main2} follows directly from (the slightly stronger) Theorem \ref{thm:Bdelta_general}.

\section{Bogomolov--Tschinkel's threefolds}\label{section:bt_threefolds}

The following lemma on BT-threefolds (Definition \ref{defn:bt}) was already proven in Bogomolov--Tschinkel \cite{BT}; we include a slightly different proof for the reader's convenience. Recall that, by definition, a BT-threefold fits into the Cartesian square
\begin{equation*} \begin{tikzcd}
X \ar[r] \ar[d] & S \ar[d, "\psi"] \\ B \ar[r, "\phi"] & \mathbb{P}^1
\end{tikzcd} \end{equation*}
where $S$ and $B$ are elliptic surfaces, $\psi$ is an elliptic fibration, and $\phi$ is a higher-genus fibration satisfying certain properties. Also, to state the lemma, recall that a variety $X$ over $k$ is \emph{algebraically simply-connected} if $\pi_1^{\et}(X_{\overline{k}})$ is trivial, where $\overline{k}$ is an algebraic closure of $k$. 

\begin{lemma}\label{lemma:bt}  
Let $X$ be a BT-threefold. Then $X$ is a smooth projective algebraically simply-connected threefold which is weakly-special but not special. Moreover, if $k=\mathbb{C}$, then $X^{\an}$ is simply-connected.
\end{lemma}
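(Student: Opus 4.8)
The plan is to verify each of the listed properties of $X$ in turn, using the cartesian square and the properties imposed on $\phi\colon B\to\mathbb{P}^1$ and $\psi\colon S\to\mathbb{P}^1$ in Definition \ref{defn:bt}. First, for smoothness: since $\phi$ and $\psi$ have disjoint singular loci, the base-change $X=S\times_{\mathbb{P}^1}B$ is smooth — away from the singular locus of $\psi$ the projection $X\to B$ is a smooth (elliptic) fibration over the smooth $B$, and away from the singular locus of $\phi$ the projection $X\to S$ is a smooth fibration over the smooth $S$; since these two open sets cover $X$, the scheme $X$ is smooth over $k$. Projectivity is clear as $X$ is a closed subscheme of $S\times B$, and dimension $3$ is immediate. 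Geometric connectedness follows because $X\to B$ has geometrically connected fibers (it is an elliptic fibration obtained by base change from $\psi$) over the geometrically connected base $B$.

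Next, the topological/étale-fundamental-group statement. Over $\mathbb{C}$ I would use the fact that $\psi$ has a single multiple fiber $\psi^{-1}(0)$ of multiplicity $m$ and is otherwise "simple", so that $\pi_1(S^{\an})$ surjects onto $\pi_1((\mathbb{P}^1,0)\text{-orbifold})=\mathbb{Z}/m$ via $\psi$ (this is the standard computation for elliptic fibrations with one multiple fiber; compare \cite{BT}). The base change $X\to B$ pulls this fibration back along $\phi$, and since $D=\phi^{-1}(0)$ maps to $0$ with the fiber of $\phi$ through a general point of $D$ meeting $0$ with multiplicity one (here one uses that $D$ is smooth and reduced, so $\phi$ is unramified along $D$ onto $0\in\mathbb{P}^1$... more precisely that $\phi^*0=D$ as a divisor), the multiple fiber of $\psi$ gets "killed" in $X$: the fibration $X\to B$ has \emph{no} multiple fibers. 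Combining the exact sequence of the fibration $X\to B$ with vanishing of the orbifold monodromy contribution and the hypothesis that $B\setminus D$ — equivalently, the relevant orbifold base of $X\to B$ — is simply-connected, one gets $\pi_1(X^{\an})=1$. Passing to étale fundamental groups (via the comparison theorem and base change to any algebraically closed field of characteristic zero) gives algebraic simple-connectedness of $X_{\overline k}$.

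For weakly-special: since $X_{\overline k}$ is simply-connected, it has no nontrivial finite étale covers, so it suffices to check that $X_{\overline k}$ itself does not dominate a positive-dimensional variety of general type. The only fibrations on $X$ to worry about are built from $X\to B$ and $X\to S$; the fibers of $X\to B$ are elliptic curves and the fibers of $X\to S$ are the genus-$g$ curves pulled back from $\phi$ — but $X\to S\to \mathbb{P}^1$ shows the genus-$g$ fibration factors through $\mathbb{P}^1$, which is not of general type, and a careful argument (as in \cite{BT}) shows no other dominant map to a general-type variety exists because $B$ has Kodaira dimension one and $S$ is properly elliptic, so neither $S$ nor $B$ nor $X$ has a dominant rational map onto a positive-dimensional variety of general type. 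Finally, for non-speciality: the composite $X\to B\to\mathbb{P}^1$ together with the multiple fiber data produces, on $B$, the orbifold base $(B,\Delta)$ with $\Delta=(1-\tfrac1m)D$, and the point is that $\Lambda^2\Omega^1_X$ contains a Bogomolov sheaf coming from this orbifold structure — indeed the orbifold cotangent data of $(B,\Delta)$ (which is of general type once the BTCP inequality is imposed, but for mere non-speciality one only needs the orbifold base to have maximal orbifold Kodaira dimension, i.e. to be of general type, which holds here) pulls back to a Bogomolov sheaf on $X$. The main obstacle I expect is this last step: carefully identifying the Bogomolov sheaf inside $\Lambda^2\Omega^1_X$, i.e. checking that the orbifold base $(B,\Delta)$ of the fibration $X\to B$ genuinely witnesses non-speciality of $X$ in the sense of Campana's Definition, which requires the orbifold-base construction of Definition \ref{definition:orbifold_base} to interact correctly with Campana's definition of Bogomolov sheaf — this is exactly the content of the Bogomolov–Tschinkel construction and I would follow their argument, citing \cite{BT} for the delicate parts and only re-deriving the fundamental-group computation in the slightly streamlined way sketched above.
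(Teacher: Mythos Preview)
Your argument for simple-connectedness contains a genuine error. You claim that ``the multiple fiber of $\psi$ gets killed in $X$: the fibration $X\to B$ has \emph{no} multiple fibers,'' but this is false. For $b\in D$ the fiber of $X\to B$ over $b$ is the scheme-theoretic fiber $\psi^{-1}(\phi(b))=\psi^{-1}(0)$, which is $m$ times a reduced curve; so $X\to B$ has multiple fibers of multiplicity $m$ along all of $D$. This is precisely why the orbifold base of $X\to B$ is $(B,(1-\tfrac1m)D)$, the very fact you invoke in the non-speciality step. Base change along $\phi$ would only kill the multiple fiber if $\phi$ were ramified of order divisible by $m$ over $0$, whereas here $\phi^*0=D$ is reduced. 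Incidentally, the orbifold fundamental group of $(\mathbb{P}^1,(1-\tfrac1m)\cdot 0)$ is trivial, not $\mathbb{Z}/m$, since $\mathbb{P}^1\setminus\{0\}\cong\mathbb{C}$ is already simply connected; so the auxiliary computation you sketch for $\pi_1(S)$ is also off.

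Even if you restrict to the open $U\subset X$ lying over $B\setminus D$ (where the pulled-back fibration genuinely has no multiple fibers), you must still show that the map $\pi_1(\text{smooth fiber})\to\pi_1(U)$ is zero, i.e., that the $\mathbb{Z}^2$ coming from the elliptic fiber dies. This is not automatic: it requires, for instance, a simply-connected singular fiber, and non-isotriviality alone does not guarantee one (an elliptic fibration may have only $I_n$-type singular fibers, none of which is simply connected). The paper handles this by a deformation argument: after passing to a relatively minimal model it replaces $S$ by a deformation-equivalent elliptic surface $S'\to\mathbb{P}^1$ with the same $\chi(\mathcal{O})$ and the same single multiple fiber but which \emph{does} possess a simply-connected fiber (using \cite[Theorem~I.7.6]{FM94}), notes that $U$ and $U'$ are then diffeomorphic, and applies Nori's exact sequence $\pi_1(F)\to\pi_1(U')\to\pi_1(B\setminus D)\to 1$ together with the observation that $\pi_1(F)\to\pi_1(U')$ factors through a neighborhood retracting onto the simply-connected fiber. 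Your sketch supplies no substitute for this step. The remaining parts of your outline (smoothness, projectivity, non-speciality) are in line with the paper, though your weakly-special argument is considerably vaguer than the paper's short case analysis on $\dim Y$.
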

\begin{proof}
Over each point $t \in \mathbb{P}^1$ such that $\phi$ is smooth over $t$, the map $X \to S$ is smooth as well. Since $S$ is a smooth variety, it follows that each point of $X$ mapping to a point of $\mathbb{P}^1$ over which $\phi$ is smooth is itself a smooth point. The same argument holds for the map $\psi$. As the singular loci of $\phi$ and $\psi$ are disjoint, it follows that $X$ is smooth. The map $\phi$ has geometrically connected fibers, hence the map $X \to S$ does as well. As $S$ is connected, it follows that $X$ is connected. As $X$ is smooth, it follows that $X$ is integral, hence a variety. Since projective morphisms are stable under base change and composition, $X$ is projective. It is clear that $X$ is three-dimensional. 

Assume that $k=\mathbb{C}$. To see that $X$ is simply-connected, we may assume that $S\to\mathbb{P}^1$ is a relatively minimal elliptic fibration (as blow-ups of smooth projective varieties do not change the fundamental group). Now, we show that the open subset $U \subseteq X$ lying over $B \setminus D$ is simply-connected. There exists an elliptic surface $S' \to \mathbb{P}^1$ with $\chi(\mathcal{O}_S) = \chi(\mathcal{O}_{S'})$, which has a simply-connected fiber and a unique multiple fiber $F'$ of multiplicity $m$. By \cite[Theorem I.7.6]{FM94}, the surface $S$ is deformation equivalent to $S'$, and the deformation respects the elliptic fibrations of $S$ and $S'$. We assume that $F'$ lies over $0 \in \mathbb{P}^1$. The threefold $X$ is then deformation equivalent to $X' := B \times_{\mathbb{P}^1} S'$ and its open subset $U$ is deformation equivalent to $U' := (B \setminus D) \times_{\mathbb{P}^1\setminus\{0\}} (S' \setminus F')$.  Since $U$ and $U'$ are diffeomorphic, we have that $\pi_1(U) \cong \pi_1(U')$. Thus, it suffices to show that $U'$ is simply-connected. Now, the morphism $U' \to (B \setminus D)$ is an elliptic fibration with no multiple fibers and at least one simply-connected fiber. By \cite[Lemma 1.5.C]{Nori83}, the sequence
\[
\pi_1(F) \to \pi_1(U') \to \pi_1(B \setminus D) \to 1
\]
is exact, where $F \subseteq U'$ denotes any smooth fiber of $U' \to (B \setminus D)$. As $B \setminus D$ is simply-connected by definition, it hence suffices to show that the map $\pi_1(F) \to \pi_1(U')$ is the zero map. For this, let $F''$ be a simply-connected fiber of $U' \to (B \setminus D)$ and let $V \subseteq U'$ be an open neighborhood of $F''$ (for the Euclidean topology) which deformation retracts onto $F''$. In particular, $\pi_1(V)$ is trivial. Then $V$ contains a smooth fiber $F$ of $U' \to (B \setminus D)$. Thus, the map $\pi_1(F) \to \pi_1(U')$ factors over $\pi_1(V)$ and hence must be the zero map, as desired.
% As $B \setminus D$ is simply-connected, it follows from the short exact $\pi_1$-sequence (see \cite[Tag~0C0J]{stacks-project}) that $U$ is algebraically simply-connected.

To show that $X$ is algebraically simply-connected, we may assume that $k=\mathbb{C}$, so that the result follows from the fact that $X^{\an}$ is simply-connected.

  To see that $X$ is weakly-special,    it suffices to show that $X$ does not admit a dominant rational map to any positive-dimensional variety of general type (as $X$ is simply-connected).   So assume that $X \ratmap Y$ is a dominant rational map to a positive-dimensional variety. In case $\dim Y = 3$, as $X$ is covered by a family of elliptic curves, the variety $Y$ will be as well.    In case $\dim Y = 2$, if the map $X \ratmap Y$ contracts the fibers of $X \to B$, it rationally factors over $X \to B$, so that $Y$ is rationally dominated by $B$. If $\dim Y = 2$ and $X \ratmap Y$ does not contract the fibers of $X \to B$, then $Y$ is again covered by a family of elliptic curves. Finally, if $\dim Y = 1$, then, as $X$ is simply-connected and hence has trivial Albanese variety, $Y$ must have trivial Albanese variety as well. Thus $Y \cong \mathbb{P}^1$. In any case, $Y$ is not of general type. 

To see that $X$ is not special, it suffices to observe that the orbifold base of the morphism $X \to B$ is $(B, (1-\frac{1}{m})D)$, which is of general type, as $m \geq 2$.   
\end{proof}
 
Since the fibers of $X \to B$ are elliptic curves, the core morphism of $X$ (as defined \cite[Definition~10.2]{Ca11}) must contract them, so that $(B, \Delta)$ is the core of $X$. We do not use this in what follows.

Despite the employed terminology, due to possible ``codimension two phenomena'', it is not at all clear that, given a fibration $f \colon X \to Y$ with orbifold base $\Delta_f$ the morphism $f \colon X \to Y$ induces an orbifold map $X \to (Y,\Delta_f)$; see \cite[Section~3.7]{JR} for a discussion. Fortunately, for a BT-threefold $X$, it is easy to see that $X\to (B,\Delta)$ is a flat orbifold morphism:
 
\begin{lemma}\label{lem:orbi_map}
If $X$ is a BT-threefold, then the morphism $X \to (B,\Delta)$ is flat and orbifold.
\end{lemma}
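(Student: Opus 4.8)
The plan is to verify the two claims, flatness and the orbifold condition, separately, and both reduce to a local computation around the divisor $D$. For flatness, recall that $X = B \times_{\mathbb{P}^1} S$, so $X \to B$ is the base change of $\psi \colon S \to \mathbb{P}^1$ along $\phi \colon B \to \mathbb{P}^1$. Since base change preserves flatness, it suffices to know that $\psi \colon S \to \mathbb{P}^1$ is flat; but $\psi$ is a surjective morphism from a smooth (hence Cohen--Macaulay) surface to a smooth curve, so it is flat (a surjective morphism to a regular one-dimensional scheme from a Cohen--Macaulay scheme is flat, as all fibers have the expected dimension). Hence $X \to B$ is flat. Equivalently, one can argue directly: $X$ is smooth by Lemma \ref{lemma:bt} and $X \to B$ has all fibers of dimension one (elliptic curves, or the multiple fiber over $D$), so the morphism from the Cohen--Macaulay scheme $X$ to the regular surface $B$ is flat by miracle flatness.

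For the orbifold condition, I need to check that $f \colon X \to (B,\Delta)$ with $\Delta = (1-\tfrac1m)D$ is an orbifold morphism, i.e. $f(X) \not\subset \supp \Delta = D$ (clear, since $f$ is surjective) and for every prime divisor $D' \subseteq \supp f^*D$ we have $t\cdot m(D') \geq m(D) = m$, where $t$ is the coefficient of $D'$ in $f^*D$. Since $D' \subset \supp \Delta_X = \emptyset$ in the sense that $\Delta_X = 0$ on $X$ — wait, $X$ is a variety, so we view it as $(X,0)$, meaning $m(D') = 1$ for every prime divisor. So the condition reduces to: every prime divisor $D'$ appearing in $f^*D$ does so with multiplicity $t \geq m$. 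Now $f^*D$ is the scheme-theoretic fiber $X_D$, which by the cartesian diagram equals the pullback to $X$ of $\psi^{-1}(0)$. By hypothesis $\psi^{-1}(0)$ is a multiple fiber of multiplicity $m$, i.e. $\psi^*(0) = m F_0$ for a reduced (irreducible) divisor $F_0$ on $S$ — here I use that the unique multiple fiber has multiplicity exactly $m$. Pulling back along $B \to \mathbb{P}^1$: the preimage of $0 \in \mathbb{P}^1$ in $B$ is the reduced divisor $D$ (since $D = \phi^{-1}(0)$ is smooth, hence $\phi^*(0) = D$ as $D$ is reduced and $\phi$ is smooth along $D$ — wait, need $\phi$ smooth over a neighborhood of $0$; actually it suffices that $\phi^{-1}(0) = D$ is reduced, which is given), so $f^*D = f^*\phi^*(0) = (X \to S)^* \psi^*(0) = (X \to S)^*(m F_0) = m \cdot G$ where $G = (X\to S)^{-1}(F_0)$ with its reduced structure, which is the fiber $X_D = \phi^{-1}(0) \times_{\{0\}} F_0$, an irreducible divisor since $D$ and $F_0$ are both integral. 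Hence every prime divisor in $f^*D$ appears with coefficient exactly $m$, so $t = m \geq m = m(D)$, and the orbifold condition holds.

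The main obstacle, such as it is, is making sure the fiber $X_D$ is reduced-times-$m$ with the right multiplicity and is in fact irreducible — in particular, ruling out that $\phi^{-1}(0)$ or the extra factor introduces additional components or non-reducedness. This is handled by the hypotheses in Definition \ref{defn:bt}: $D = \phi^{-1}(0)$ is smooth (so in particular reduced and irreducible along its generic point), $\psi^{-1}(0) = mF_0$ with $F_0$ a reduced fiber, and the singular loci of $\phi$ and $\psi$ are disjoint (so $\phi$ is smooth near $D$, which guarantees $\phi^*(0) = D$ scheme-theoretically). I would spell out: étale-locally on $\mathbb{P}^1$ near $0$ with coordinate $s$, $\phi$ pulls $s$ back to a local equation for $D$ (since $\phi$ is smooth along $D$, as $\mathrm{sing}(\phi) \cap D = \emptyset$), while $\psi$ pulls $s$ back to $u^m$ for $u$ a local equation of $F_0$; hence on $X$, a local equation for $X_D$ pulls back from $D$ as $s$ and from $F_0$ as $u^m = s$, so $X_D = \{u^m = 0\}$ has multiplicity $m$ along the irreducible reduced divisor $\{u = 0\}$. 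This gives the claim, and combined with flatness established above completes the proof.

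\begin{proof}
Since $X = B \times_{\mathbb{P}^1} S$, the morphism $X \to B$ is the base change of $\psi \colon S \to \mathbb{P}^1$ along $\phi \colon B \to \mathbb{P}^1$. As $S$ is smooth (hence Cohen--Macaulay) and $\psi$ is a surjective morphism to the regular one-dimensional scheme $\mathbb{P}^1$, the morphism $\psi$ is flat; flatness is preserved under base change, so $X \to B$ is flat. Alternatively, $X$ is smooth by Lemma \ref{lemma:bt} and all fibers of $X \to B$ are one-dimensional, so flatness follows from miracle flatness applied to the morphism from the Cohen--Macaulay scheme $X$ to the regular surface $B$.

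It remains to check that $f \colon X \to (B,\Delta)$ is an orbifold morphism, where $\Delta = (1-\tfrac1m)D$, so that $m(D) = m$ and $\supp \Delta = D$. Since $f$ is surjective, $f(X) \not\subset D$. As $X$ is a variety, we view it as $(X,0)$, so $m(D') = 1$ for every prime divisor $D'$ of $X$; hence the orbifold condition reduces to showing that every prime divisor $D' \subseteq \supp f^*D$ appears in $f^*D$ with coefficient at least $m$. Working étale-locally near $0 \in \mathbb{P}^1$ with local coordinate $s$: since the singular loci of $\phi$ and $\psi$ are disjoint, in particular $\phi$ is smooth along $D = \phi^{-1}(0)$, so $\phi^*(s)$ is a local equation for the reduced divisor $D$ on $B$; since $\psi^{-1}(0)$ is a multiple fiber of multiplicity $m$, we have $\psi^*(s) = u^m$ for a local equation $u$ of the reduced fiber $F_0 := (\psi^{-1}(0))_{\mathrm{red}}$ on $S$. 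Pulling back to $X$ via the two projections, a local equation for the scheme-theoretic fiber $f^*D = X_D$ is, on the one hand, the pullback of $s$ from $D$, and on the other hand $u^m = s$ pulled back from $F_0$. Hence $f^*D = m\cdot G$, where $G$ is the reduced divisor cut out by the pullback of $u$; moreover $G = D \times_{\{0\}} F_0$ is irreducible since $D$ and $F_0$ are integral. Therefore the unique prime divisor $D' = G$ in $\supp f^*D$ occurs with coefficient $t = m$, so $t\cdot m(D') = m \cdot 1 = m \geq m(D)$, and $f \colon X \to (B,\Delta)$ is an orbifold morphism.
\end{proof}
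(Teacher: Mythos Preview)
Your proof is correct and follows essentially the same approach as the paper: flatness via base change of the flat morphism $\psi$, then the orbifold condition from the multiplicity of $f^*D$. The paper is terser---it observes that flatness rules out $f$-exceptional divisors and then says the orbifold condition holds ``by construction'' of $(B,\Delta)$ as the orbifold base of $X\to B$---whereas you unpack this by computing $f^*D = mG$ explicitly; one small overclaim is that $G$ need not be irreducible (the reduced multiple fiber $F_0$ can be a cycle of rational curves of Kodaira type $_mI_b$ with $b\geq 2$), but this is harmless since each prime component of $f^*D$ still appears with coefficient $m$.
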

\begin{proof}
Since $X \to B$ is a base change of the flat morphism $S \to \mathbb{P}^1$, it follows that $X \to B$ is flat. In particular, the morphism $X \to B$ has no exceptional divisors. Thus, the morphism $X \to (B,\Delta)$ satisfies the orbifold condition by construction.
\end{proof}

The construction in \cite{BT} can be summarized by saying that BT-threefolds exist. We will need the following improvement of their result due to Campana--P\u{a}un \cite[Section 2]{CampanaPaun2007} (already alluded to in the introduction).

\begin{theorem}[Bogomolov--Tschinkel, Campana--P\u{a}un]\label{thm:btcp}
BTCP-threefolds exist. 
\end{theorem}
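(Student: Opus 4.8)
The statement to prove is Theorem \ref{thm:btcp}: \emph{BTCP-threefolds exist}. The plan is to construct, following Bogomolov--Tschinkel \cite{BT} and Campana--P\u{a}un \cite{CampanaPaun2007}, the data of a cartesian square as in Definition \ref{defn:bt} and then verify the numerical inequality $c_1\!\left(\left(B,(1-\tfrac1m)D\right)\right)^2 > c_2\!\left(\left(B,(1-\tfrac1m)D\right)\right)$ by choosing the discrete parameters (the fiber genus $g$, the multiplicity $m$, and the numerical invariants of $B$) large or small enough. I will carry this out in four steps.

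\textbf{Step 1: Construct the base surface $B$.} Start with a smooth projective curve $\Gamma$ of genus $\geq 2$ equipped with a non-constant map $h\colon\Gamma\to\mathbb{P}^1$ and a relatively minimal non-isotrivial elliptic surface $E\to\mathbb{P}^1$ of Kodaira dimension one whose only multiple fiber sits over a point $p\in\mathbb{P}^1$ with $h^{-1}(p)$ reduced and avoiding the branch points of $h$ and the singular fibers of $E$. One then takes a smooth model of $E\times_{\mathbb{P}^1}\Gamma$; no, more carefully, following Campana--P\u{a}un one instead builds $B$ directly as a fibration $B\to\Gamma$ whose fibers are curves and which is also equipped with an elliptic fibration — concretely, $B$ is obtained as a desingularized fiber product or as an explicit (weighted) hypersurface, as in Remark \ref{remark:kloosterman}, arranged so that (i) $B$ has Kodaira dimension one with non-isotrivial elliptic fibration $B\to\mathbb{P}^1$, (ii) there is a smooth curve $\phi\colon B\to\mathbb{P}^1$ of genus $g\geq 2$ (a \emph{second} fibration of $B$) with $D:=\phi^{-1}(0)$ smooth, and (iii) $B\setminus D$ is simply-connected. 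The simple-connectedness of $B\setminus D$ is arranged by taking $D$ to be an ample enough divisor so that a Lefschetz-type / Nori-type argument (or the explicit Fermat-quotient construction) applies; this is exactly the point at which the Bogomolov--Tschinkel construction is delicate.

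\textbf{Step 2: Construct the elliptic surface $\psi\colon S\to\mathbb{P}^1$ and the threefold $X$.} Choose a relatively minimal non-isotrivial elliptic surface $\psi\colon S\to\mathbb{P}^1$ with exactly one multiple fiber, of multiplicity $m\geq 2$, located over $0\in\mathbb{P}^1$ (such surfaces exist, e.g.\ by logarithmic transformation of a rational elliptic surface), arranging that the singular locus of $\psi$ is disjoint from the image in $\mathbb{P}^1$ of the singular locus of $\phi$ (possible after a further coordinate change, since both are finite sets of points). Set $X:=B\times_{\mathbb{P}^1}S$. By Lemma \ref{lemma:bt} and Lemma \ref{lem:orbi_map}, $X$ is then automatically a smooth projective simply-connected weakly-special non-special threefold, and $X\to(B,\Delta)$ is a flat orbifold morphism with $\Delta=(1-\tfrac1m)D$. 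So all the bulleted conditions of Definition \ref{defn:bt} for a BT-threefold are met.

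\textbf{Step 3: Verify the BTCP inequality.} It remains to see that the parameters can be chosen so that
\[
c_1(B)^2 + (1-\tfrac1m)K_B.D > c_2(B) + (\tfrac1m-\tfrac1m^2)D^2.
\]
Here $c_2(B)=e(B)$ is the topological Euler characteristic and $c_1(B)^2=K_B^2$. The strategy is to make the right-hand side small relative to the left: since $D=\phi^{-1}(0)$ is a fiber of the genus-$g$ fibration $\phi\colon B\to\mathbb{P}^1$, we have $D^2=0$, which kills the last term entirely. The inequality then reads $K_B^2 + (1-\tfrac1m)K_B.D > e(B)$, and $K_B.D = 2g-2$ by adjunction on the smooth fiber $D$. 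As $m\to\infty$ the left side tends to $K_B^2 + (2g-2)$, so it suffices to arrange $K_B^2 + 2g-2 > e(B)$ for some admissible $B$. Using the standard formulas for (blown-up) elliptic surfaces or for the explicit family in Remark \ref{remark:kloosterman} — where $e(B)$ grows only linearly in the relevant parameter while $K_B^2$ (after the fiber-product construction which makes $\phi$ have large genus $g$) grows at least as fast — one checks this holds once $g$ (equivalently, the degree of the defining data of $B$) is taken large enough and $m$ is taken large enough. This reduces to an explicit but routine Euler-characteristic / canonical-class computation on the chosen model, e.g.\ via Noether's formula $12\chi(\mathcal{O}_B)=K_B^2+e(B)$ together with control of $\chi(\mathcal{O}_B)$ from the two fibration structures.

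\textbf{Main obstacle.} The genuine difficulty is Step 1 — producing a single surface $B$ that simultaneously has Kodaira dimension one with a \emph{non-isotrivial} elliptic fibration, carries a second fibration to $\mathbb{P}^1$ with smooth fiber $D$ of genus $g\geq 2$, has $B\setminus D$ simply-connected, \emph{and} satisfies the numerical inequality. Each requirement pulls the construction in a different direction (non-isotriviality forces enough singular fibers, hence raises $e(B)$; simple-connectedness of $B\setminus D$ forces $D$ to be suitably ample; the Chern inequality wants $e(B)$ small and $K_B^2$ large). The resolution, due to Bogomolov--Tschinkel and refined by Campana--P\u{a}un, is to build $B$ as a well-chosen fiber product (or weighted hypersurface) and then pass to a relatively minimal, or suitably blown-up, model, tuning the discrete parameters; I would cite \cite{BT} and \cite[Section 2]{CampanaPaun2007} for the existence of such $B$ and devote the bulk of the proof to extracting from their construction the precise numerical estimate in Step 3. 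Everything else (Steps 2 and the formal properties of $X$) is then immediate from Lemmas \ref{lemma:bt} and \ref{lem:orbi_map}.
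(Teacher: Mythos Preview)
The paper's proof is far shorter than your sketch: it simply cites \cite[Sections~2.2--2.3]{CampanaPaun2007} for the entire construction and numerical verification, and then supplies the one property Campana--P\u{a}un did not need but this paper does, namely that the elliptic fibration on $B$ is non-isotrivial. This is argued by noting that the minimal model $B_0$ of their $B$ is a double cover of $\mathbb{P}^1\times\mathbb{P}^1$ branched along a smooth curve of bidegree $(2(k+2),4)$ for $k\gg 0$, and that a general choice of branch curve makes the resulting elliptic pencil non-isotrivial. You do not mention this point at all, even though it is the only content of the paper's proof beyond the citation.

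More seriously, your Step~3 contains a genuine error. Since $B$ has Kodaira dimension one, its minimal model has $K_{B_0}^2=0$, and every further blow-up decreases $K_B^2$ by one while increasing $e(B)$ by one; so the assertion that ``$K_B^2$\dots grows at least as fast'' as $e(B)$ is simply false. Concretely, for the double-cover model just described, with $\phi$ the projection giving genus-$(k+1)$ fibers, one computes $K_{B_0}^2=0$, $D^2=0$, $K_{B_0}.D=2k$, and $e(B_0)=12k+24$; your inequality $(1-\tfrac1m)(2g-2)>e(B)$ then reads $(1-\tfrac1m)\cdot 2k>12k+24$, which fails for every $k$ and $m$. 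Thus the Chern-class inequality cannot be obtained by the asymptotic ``take $g$ and $m$ large'' heuristic you propose, and the actual verification in \cite{CampanaPaun2007} must involve a more careful choice of $\phi$ than the obvious second projection. Your appeal to Remark~\ref{remark:kloosterman} is also misplaced: that surface is introduced only to show that certain Hom-schemes can be positive-dimensional, and it comes with no second fibration $\phi$ nor any simple-connectedness of a complement $B\setminus D$.
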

\begin{proof}
This result is proven in Sections 2.2 and 2.3 of \cite{CampanaPaun2007}. Indeed, in \emph{loc. cit.} the authors construct a smooth projective surface $S$ and a non-isotrivial elliptic fibration $\psi\colon S\to \mathbb{P}^1$ with precisely one multiple fiber $\psi^{-1}(0)$. Moreover, they construct a smooth projective surface $B$ of Kodaira dimension one and a morphism $\phi \colon B\to \mathbb{P}^1$ with the desired properties. The only condition that is not explicitly verified in \emph{loc. cit.} is that the elliptic fibration on $B$ is non-isotrivial. However, as their construction shows in Section 2.3 of \emph{loc. cit.}, the minimal model $B_0$ of $B$ can be taken to be a double cover of $\mathbb{P}^1\times \mathbb{P}^1$ ramified along any smooth divisor $R$ of type $(2(k+2), 4)$ with $k$ some sufficiently large integer. Choosing $R$ general enough, the resulting elliptic surfaces $B_0$ and $B$ are non-isotrivial.
\end{proof}

Our work culminates in the following results on Campana's conjecture for the general type orbifold surface $(B,\Delta)$.

\begin{corollary}\label{cor:Bdelta}
Let $K$ be a field of characteristic zero. If $X$ is a BTCP-threefold over $K$ with associated elliptic fibration $X \to (B,\Delta)$, then the following statements hold.
\begin{enumerate}
\item There is a proper closed subset $Z\subset B_{\overline{K}}$ such that the smooth projective orbifold $(B_{\overline{K}},\Delta_{\overline{K}})$ is algebraically hyperbolic modulo $Z$ and of general type over $\overline{K}$.
\item If $K$ is finitely generated over $\mathbb{Q}$, $L/K$ is a finitely generated field extension and $V$ is smooth variety over $L$, then the set of orbifold near-maps $f \colon V \ratmap (B_L,\Delta_L)$ with $f(V_{\overline{L}}) \not \subset Z_{\overline{L}}$ is finite.  
\end{enumerate}
\end{corollary}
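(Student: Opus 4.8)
The plan is to deduce this corollary directly from Theorem \ref{thm:Bdelta_general} (equivalently Theorem \ref{thm:main2}), using Theorem \ref{thm:btcp} to guarantee that BTCP-threefolds exist and that the associated surface $(B,\Delta)$ has the required properties. First I would verify that the orbifold base $(B,\Delta)$ with $\Delta = (1-\frac{1}{m})D$ satisfies all the hypotheses of Theorem \ref{thm:Bdelta_general}: by Definition \ref{defn:bt}, the surface $B$ has Kodaira dimension one with non-isotrivial associated elliptic fibration; the divisor $D = \phi^{-1}(0)$ is smooth, so $(B,\Delta)$ is a smooth projective orbifold; the orbifold $(B,\Delta)$ is of general type since $K_B + \Delta = K_B + (1-\frac{1}{m})D$ and $B$ has Kodaira dimension one with $m \geq 2$ (so the canonical bundle of the elliptic fibration plus the multiple-fiber contribution makes $K_B+\Delta$ big — this is exactly the observation at the end of Lemma \ref{lemma:bt}); and finally the BTCP condition is precisely the numerical inequality $c_1(B,\Delta)^2 > c_2(B,\Delta)$ spelled out in Definition \ref{defn:bt}.

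Once these hypotheses are checked, part (1) is immediate from Theorem \ref{thm:Bdelta_general}.(1): there is a proper closed subset $Z \subset B_{\overline{k}}$ such that $(B_{\overline{k}}, \Delta_{\overline{k}})$ is algebraically hyperbolic modulo $Z$ over $\overline{k}$, and $(B_{\overline{k}},\Delta_{\overline{k}})$ is of general type over $\overline{k}$ since general type is preserved under base change to the algebraic closure. For part (2), I would first reduce to the case where $k$ is finitely generated over $\mathbb{Q}$, which is assumed in the statement; then Theorem \ref{thm:Bdelta_general}.(2), applied with $K = k$, immediately yields that for every finitely generated field extension $L/k$ and every smooth variety $V$ over $L$, the set of orbifold near-maps $f \colon V \ratmap (B_L,\Delta_L)$ with $f(V_{\overline{L}}) \not\subset Z_{\overline{L}}$ is finite. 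One small point to address: Theorem \ref{thm:Bdelta_general} is stated for $(B,\Delta)$ defined over the finitely generated field $K$, so I would note that a BTCP-threefold $X$ over $k$ (with $k$ finitely generated over $\mathbb{Q}$) has its associated $(B,\Delta)$ defined over $k$ as well, which follows from the construction in the cartesian square of Definition \ref{defn:bt}.

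I do not expect any genuine obstacle here, as this corollary is essentially a repackaging of Theorem \ref{thm:Bdelta_general} for the specific surfaces arising from BTCP-threefolds; the only thing requiring care is the bookkeeping of fields of definition and the verification that the orbifold $(B,\Delta)$ associated to a BTCP-threefold genuinely meets every hypothesis of Theorem \ref{thm:Bdelta_general}. The mild subtlety, if any, is confirming that the smoothness of $D = \phi^{-1}(0)$ (part of the BT-threefold definition) makes $(B, (1-\frac{1}{m})D)$ a \emph{smooth} projective orbifold in the sense of Section \ref{section:orbifolds} — but a smooth divisor on a smooth surface trivially has strict normal crossings support, so this is automatic.
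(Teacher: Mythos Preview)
Your proposal is correct and essentially matches the paper's approach: the paper invokes Theorem \ref{thm:bogomolov} directly for part (1) and Theorem \ref{thm:main2} for part (2), whereas you route both through Theorem \ref{thm:Bdelta_general}, which is itself proved by combining exactly those two results. One small technical point: Theorem \ref{thm:Bdelta_general} is stated over a finitely generated field $K$, while part (1) of the corollary allows $k$ to be an arbitrary field of characteristic zero, so for part (1) you should either observe that the proof of Theorem \ref{thm:Bdelta_general}.(1) (namely Theorem \ref{thm:bogomolov}) does not use the finitely-generated hypothesis, or cite Theorem \ref{thm:bogomolov} directly as the paper does. Also, the appeal to Theorem \ref{thm:btcp} (existence of BTCP-threefolds) is unnecessary here since $X$ is given; the relevant properties of $(B,\Delta)$ come from Definition \ref{defn:bt} and the end of the proof of Lemma \ref{lemma:bt}, as you correctly note elsewhere.
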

\begin{proof}
Since the surface $B$ is of Kodaira dimension one with non-isotrivial elliptic fibration and the orbifold $(B,\Delta)$ is of general type, with $c_1(B,\Delta)^2 > c_2(B,\Delta)$ (as noted in Example \ref{example:btcp}), the corollary follows from Theorem \ref{thm:Bdelta_general}.
\end{proof}

\begin{remark}  
  Corollary \ref{cor:Bdelta}.(2) is false over algebraically closed fields by Remark \ref{remark:false}.  %That is, there is a BTCP-threefold $X$ over $\mathbb{C}$ with core $(B,\Delta)$ and a smooth projective curve $C$ such that the set of non-constant maps $C\to (B,\Delta)$ is dense in $B$.
\end{remark}

\begin{lemma}\label{lemma:faltings2}
Let $V$ be a variety over a finitely generated field $K$ of characteristic zero. 
Then the set of $K$-isomorphism classes of abelian varieties  dominated by $V$ is finite.
\end{lemma}
\begin{proof}  
Let $\overline{V}$ be a smooth projective variety birational to $V$. If $V$ dominates an abelian variety $B$, then there is a surjective morphism $\overline{V} \to B$ (as $B$ has no rational curves) and thus, by the universal property of Albanese varieties,  the Albanese variety $A:=\mathrm{Alb}(\overline{V})$ of $\overline{V}$ dominates $B$.  
Up to isogeny, $A$ is a product of simple abelian varieties $A_1, \ldots, A_n$. If $A$ dominates an abelian variety $B$, then $B$ is isogenous to $\prod_{i \in I} A_i$ with $I \subseteq \{1,...,n\}$, so that there are only finitely many isogeny classes that $B$ could be in. However, by Faltings's Isogeny Theorem \cite{FaltingsComplements}, the set of $K$-isomorphism classes of abelian varieties $B$ over $K$ which are isogenous to a fixed abelian variety (over $K$) is finite.
\end{proof}

\begin{lemma} \label{lemma:isotriviality} 
Let $B$ be a variety over a finitely generated field $K$ of characteristic zero and let $X\to B$ be an elliptic fibration. Let $V$ be a variety over $K$. If the set of $b$ in $B(K)$ such that $V$ dominates $X_b$ is dense in $B$, then $X \to B$ is isotrivial.  
\end{lemma}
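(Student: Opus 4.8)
The plan is to reduce the assertion to the constancy of the $j$-invariant of the fibration $X\to B$ and then to control this $j$-invariant via Faltings's finiteness theorem, in the form of Lemma~\ref{lemma:faltings}. First, recall the standard fact that an elliptic fibration is isotrivial if and only if its $j$-invariant is constant; for the direction we need, if $j_X\in k(B)$ is constant then the (smooth, genus one) generic fibre of $X\to B$ becomes isomorphic to a fixed curve after a finite extension of $k(B)$. So I would choose a dense open $U\subseteq B$ over which $X\to B$ is smooth and proper with geometrically connected genus one fibres; then $j_X$ restricts to a morphism $j\colon U\to\mathbb{A}^1$ with $j(b)=j(X_b)$ for $b\in U(k)$. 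Since by hypothesis the set $T:=\{\,b\in B(k)\mid V\text{ dominates }X_b\,\}$ is dense in $B$, the subset $T\cap U$ is dense in the irreducible variety $U$, and it suffices to show that $j$ is constant on $U$.

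The main step is to prove that $j(T\cap U)$ is finite. Let $\overline{V}$ be a smooth projective model of $V$ and set $A:=\mathrm{Alb}(\overline{V})$, an abelian variety over the finitely generated field $k$. Fix $b\in T\cap U$. A dominant rational map $V\ratmap X_b$ induces a dominant rational map $\overline{V}\ratmap X_b$, and this is in fact a morphism because $X_b$ has genus one and hence contains no rational curve (as in the proof of Corollary~\ref{corollary:faltings2}); being a proper morphism onto a variety, it is surjective. As the Albanese construction is functorial and sends surjective morphisms of smooth projective varieties to surjective morphisms of abelian varieties, the induced homomorphism $A\to\mathrm{Alb}(X_b)=\mathrm{Jac}(X_b)$ is surjective. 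Hence, as $b$ ranges over $T\cap U$, the elliptic curve $\mathrm{Jac}(X_b)$ is always an abelian variety dominated by the fixed abelian variety $A$, so Lemma~\ref{lemma:faltings} shows that it lies in one of only finitely many $k$-isomorphism classes. Since $j(X_b)=j(\mathrm{Jac}(X_b))$ depends only on the isomorphism class of $\mathrm{Jac}(X_b)$, it follows that $j(T\cap U)$ is finite.

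To conclude, write $j(T\cap U)\subseteq\{c_1,\dots,c_n\}$. Then $T\cap U\subseteq\bigcup_{i=1}^{n}j^{-1}(c_i)$, which is closed in $U$, so the density of $T\cap U$ in $U$ forces $U=\bigcup_{i=1}^{n}j^{-1}(c_i)$; by irreducibility of $U$ we get $U=j^{-1}(c_i)$ for some $i$, and therefore $j$ is constant on $U$. Hence $j_X$ is constant and $X\to B$ is isotrivial. The only real subtlety, and the reason for routing the argument through $\mathrm{Alb}(\overline{V})$ rather than applying Corollary~\ref{corollary:faltings2} to $X_b$ directly, is that a fibre $X_b$ over $b\in B(k)$ need not carry a $k$-rational point and hence need not be isomorphic to its Jacobian over $k$.
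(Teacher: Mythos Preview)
Your proof is correct and follows essentially the same approach as the paper: both restrict to the smooth locus, use the $j$-invariant of the Jacobian fibration, invoke Faltings's finiteness (Lemma~\ref{lemma:faltings}/Corollary~\ref{corollary:faltings2}) to show that $j$ takes only finitely many values on the given dense set, and then conclude constancy of $j$ by irreducibility. Your write-up is in fact more careful than the paper's in making explicit the passage through $\mathrm{Alb}(\overline{V})\twoheadrightarrow\mathrm{Jac}(X_b)$ and the reason for it (that $X_b$ need not carry a $k$-point, so Corollary~\ref{corollary:faltings2} does not apply to $X_b$ directly).
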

\begin{proof}
By Lemma \ref{lemma:faltings2}, $V$ dominates only finitely many   elliptic curves over $K$ (up to isomorphism). Let $B^{\circ} \subseteq B$ be the smooth locus of $X \to B$. Let $j \colon B^\circ \to \mathbb{A}^1$ be the moduli map (or $j$-invariant) of the Jacobian of $X|_{B^\circ} \to B^\circ$. Since there is a dense subset of $B$ over which all fibers are pairwise isomorphic, the morphism $j$ is constant. 
\end{proof}

We are now ready to prove our main   non-density result for BTCP-threefolds (Theorem \ref{thm:main_final}) which we now restate for the reader's convenience.

\begin{theorem} \label{thm:main_final_text}
Let $X$ be a BTCP-threefold over a finitely generated field $K$ of characteristic zero and let $V$ be a variety over $K$. Then the set of non-constant rational maps $V \ratmap X$ is not dense in $X_{K(V)}$.   
\end{theorem} 
 
\begin{proof}  
Let $V$ be a variety over $K$. Assume that we have a sequence $f_i \colon V \ratmap X$ of orbifold near-maps which are dense in $X_{K(V)}$.
Let $\pi\colon X \to (B,\Delta)$ be the associated elliptic fibration. It follows from Corollary \ref{cor:Bdelta}.(2) that the set of non-constant orbifold near-maps $V \ratmap (B,\Delta)$ is not dense in $B_{K(V)}$. Since $X \to (B,\Delta)$ is a flat orbifold map (Lemma \ref{lem:orbi_map}), we obtain that the set of non-constant near-maps $V \ratmap X$ for which $V \ratmap X \to B$ is non-constant is not dense in $X_{K(V)}$. Therefore, replacing $f_i$ by a suitable subsequence, we may assume that each composition $\pi \circ f_i \colon V \ratmap B$ is constant. Let $b_i$ be the image of $\pi \circ f_i$. Since $V$ is geometrically connected over $K$, it follows that $b_i$ is a $K$-point of $B$. Moreover, since the set $\{b_i \ | \ i\in\mathbb{Z}_{\geq 1}\}$ is dense in $B$ and $V$ dominates $X_{b_i}$ for every $i$, we obtain that $X \to B$ is isotrivial (Lemma \ref{lemma:isotriviality}) which contradicts    the fact that $X \to B$ is non-isotrivial (by definition).
\end{proof}

\begin{remark}
Let $X$ be a BTCP-threefold over a finitely generated field $K$ of characteristic zero and let $\pi \colon X \to (B,\Delta)$ be the associated elliptic fibration. Let $Z \subset B_{\overline{K}}$ be as in Corollary \ref{cor:Bdelta}. Then, the proof of Theorem \ref{thm:main_final} shows that, for $V$ a variety over $K$, for all but finitely many non-constant rational maps $f \colon V \ratmap X$, the image of $f$ is contained in a fibre of $X \to B$ over a $K$-point of $B$ or in $\pi^{-1}Z$.   
\end{remark}

\begin{remark}  
Theorem \ref{thm:main_final} is false over algebraically closed fields. Indeed, for \emph{every} BTCP-threefold $X$ over an algebraically closed field $k$ of characteristic zero and every elliptic curve $E$ over $k$ with function field $K$, one can show that the set of morphisms $E \to X$ is dense in $X_K$. We conclude that the conclusion of Theorem \ref{thm:btcp} is optimal.
\end{remark}

\subsection{Geometrically special varieties}\label{section82}
In this section, we assume that $k$ is algebraically closed of characteristic zero.
	
Recall that $(X,\Delta)$ is pseudo-$1$-bounded (over $k$) if there is a proper closed subset $E\subsetneq X$ containing $\Delta$ such that $(X,\Delta)$ is $1$-bounded modulo $E$. If $\Delta=\emptyset$, then it is not hard to show that pseudo-$1$-boundedness implies finiteness of pointed maps using bend-and-break. However, in the more general orbifold setting (with $\Delta$ not necessarily empty), we can only show the non-density of pointed maps, assuming in addition two-dimensionality of $X$, non-uniruledness of $X$, and bigness of $K_X+\Delta$. Let us be more precise.
 
Following \cite[Definition~3.12]{JR}, an orbifold $(X,\Delta)$ over $k$ is \emph{geometrically-special over $k$} if, for every dense open subset $U \subseteq X$, there exists a smooth projective connected curve $C$ over $k$, a point $c$ in $C(k)$, a point $u$ in $U(k)\setminus \Delta$, and a sequence of pairwise distinct \textbf{orbifold} morphisms $f_i \colon C\to (X,\Delta)$ with $f_i(c) = u$ for $i=1,2,\ldots$ such that $\cup_i \Gamma_{f_i}$ is dense in $C \times X$.

\begin{corollary} \label{cor:surface_not_gs}
Let $(B,\Delta)$ be a smooth projective orbifold surface with $B$ non-uniruled. If $(B,\Delta)$ is of general type and pseudo-$1$-bounded, then $(B,\Delta)$ is not geometrically-special.
\end{corollary}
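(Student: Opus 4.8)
The plan is to argue by contradiction, combining the structural results on moduli of orbifold maps established above with Faltings's theorem over finitely generated fields. Suppose $(X,\Delta)$ is geometrically-special. By definition, for every dense open $U\subseteq X$ we obtain a smooth projective connected curve $C$, points $c\in C(k)$, $u\in U(k)\setminus\Delta$, and a sequence of pairwise distinct orbifold morphisms $f_i\colon C\to (X,\Delta)$ with $f_i(c)=u$ whose graphs are dense in $C\times X$. In particular, infinitely many of the $f_i$ are non-constant, and since their graphs are dense in $C\times X$, only finitely many can have image contained in any fixed proper closed subset. Choosing $U = X\setminus E$ where $E$ is a proper closed subset containing $\Delta$ modulo which $(X,\Delta)$ is $1$-bounded, we may assume all $f_i$ satisfy $f_i(C)\not\subset E$.

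The next step is to place the $f_i$ inside the moduli scheme $\underline{\Hom}^{nc}_k(C,(X,\Delta))\setminus \Hom(C,E)$. By Corollary \ref{corollary:boundedness}.(1), this scheme is of finite type over $k$ and has dimension at most one. Since it is of finite type, its set of irreducible components is finite; since the $f_i$ are infinitely many and pairwise distinct, infinitely many of them must lie on a single positive-dimensional (hence one-dimensional) irreducible component $H$. Replacing the sequence by a subsequence, we may assume all $f_i\in H(k)$.

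Now I want to invoke Faltings's theorem, but a priori $k$ is only assumed algebraically closed of characteristic zero, so we must spread out. The curve $C$, the surface $X$, the divisor $\Delta$, the closed set $E$, and the component $H$ together with the countably many points $f_i\in H(k)$ are all defined over some finitely generated subfield $K\subseteq k$: take $K$ large enough to contain a field of definition of $C$, $X$, $\Delta$, $E$, and all coefficients needed to define $H\subseteq \underline{\Hom}$, and to contain each $f_i$ (this uses that $\bigcup_i \{f_i\}$ is countable, so finitely generated $K$ suffices since $k$ has infinite transcendence degree or, if not, one simply works over $k$ itself, which is then finitely generated). Over $K$, the component $H$ is still positive-dimensional, $X$ is still a Kodaira dimension one surface with non-isotrivial elliptic fibration — wait, this last point is \emph{not} part of the hypothesis of Corollary \ref{cor:surface_not_gs}. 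Here is the actual mechanism: the hypothesis is only that $(X,\Delta)$ is of general type, $X$ non-uniruled, and pseudo-$1$-bounded. So I should \emph{not} appeal to Theorem \ref{theorem:Kodaira_dimension_one} (which needs the Kodaira dimension one structure). Instead, the correct route is: $H$ is a one-dimensional finite-type scheme over $K$, hence has a smooth projective model $\overline{H}$; by Lemma \ref{lemma:two_dim} the evaluation map $C\times H\to X$ has two-dimensional image, hence is dominant, so $C\times\overline{H}$ dominates the surface $X$; since $X$ is non-uniruled of general type, $\overline{H}$ cannot be rational, and in fact the positive-dimensionality combined with the $1$-boundedness forces the relevant Hom-scheme to consist of isolated points unless $\overline{H}$ has genus $\geq 1$ — and then one wants genus $\geq 2$.

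\textbf{Main obstacle.} The crux — and the step I expect to be hardest — is ruling out the genus-one case for $\overline{H}$ \emph{without} the Kodaira-dimension-one hypothesis. In the genus-one-and-non-isotrivial setting this was done in Theorem \ref{theorem:Kodaira_dimension_one} via the Iitaka fibration argument, but here we only have pseudo-$1$-boundedness. The resolution is that we do not need $\overline{H}$ to have genus $\geq 2$: since $(X,\Delta)$ is geometrically-special we are furnished with graphs dense in $C\times X$ and with \emph{pointed} maps $f_i(c)=u$. If $\overline{H}$ had genus $\leq 1$, then $H$ would admit a dominant map from $\mathbb{G}_m$ or an elliptic curve; combined with the evaluation map $C\times H\to X$ being dominant and with bend-and-break (as in the proof of Lemma \ref{lemma:ko_app}), one produces rational curves through a dense subset of $X$, contradicting non-uniruledness of $X$ — \emph{unless} the $\mathbb{G}_m$ or elliptic curve maps to $H$ do not move the image in $X$, which the density-of-graphs hypothesis excludes. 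Thus $\overline{H}$ has genus $\geq 2$, Faltings's theorem applies over the finitely generated field $K$, $H(K)$ is finite, and this contradicts the existence of infinitely many $f_i\in H(K)$. Therefore $(X,\Delta)$ is not geometrically-special. The one genuine subtlety to get right in the write-up is confirming that the graphs $\Gamma_{f_i}$ being dense in $C\times X$ really does prevent the $f_i$ from all factoring through a common curve $D'\subsetneq X$ — this is exactly what makes the image of $C\times H$ two-dimensional and lets Lemma \ref{lemma:two_dim} and the bend-and-break argument run.
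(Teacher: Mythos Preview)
Your approach takes a long and ultimately incomplete detour around a one-line dimension count. The paper's proof simply observes: since the graphs $\Gamma_{f_i}\subset C\times X$ are dense, the map $C\times H\to C\times X$, $(c',f)\mapsto (c',f(c'))$, is dominant, where $H=\underline{\Hom}^{nc}((C,c),((X,\Delta),u))$ is the \emph{pointed} Hom scheme. Hence $\dim H\geq\dim X=2$. But $H$ sits inside $\underline{\Hom}^{nc}(C,(X,\Delta))\setminus\underline{\Hom}(C,Z)$, which by Corollary~\ref{corollary:boundedness}(1) has dimension at most $1$. Contradiction; no Faltings, no spreading out, no genus analysis.

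You do invoke Corollary~\ref{corollary:boundedness}, but only to extract a one-dimensional component $H$, and then you try to prove $H(K)$ is finite. This route has genuine gaps. First, the spreading-out is broken: the countably many $f_i$ need not be simultaneously defined over any single finitely generated subfield, and your parenthetical escape (``work over $k$ itself, which is then finitely generated'') is false, since $k$ is algebraically closed. Second, and more seriously, you cannot rule out $\overline{H}$ having genus $1$ with the tools you invoke. Bend-and-break applied to a one-parameter family of maps $C\to X$ all sending $c\mapsto u$ yields a rational curve through the \emph{single} point $u$; that is all Mori's lemma gives you, and one rational curve does not contradict non-uniruledness. To get rational curves through a dense set you would need $\dim H\geq\dim X$ as in Lemma~\ref{lemma:ko_app}, which is precisely the inequality you have discarded by passing to a one-dimensional component. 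The difficulty of excluding genus $1$ here is real --- compare Remark~\ref{remark:assumption2}, where it is shown that the conclusion of Theorem~\ref{theorem:Kodaira_dimension_one} genuinely needs the Kodaira-dimension-one hypothesis absent from the present corollary.

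The idea you are missing is that density of the graphs is density in the \emph{three}-dimensional target $C\times X$, not merely dominance of the image in $X$; that extra dimension is exactly what forces $\dim H\geq 2$ and closes the argument immediately.
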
 
\begin{proof}  
Let $Z\subsetneq B$ be a proper closed subset such that $(B,\Delta)$ is $1$-bounded modulo $Z$.  We argue by contradiction. Assume $(B,\Delta)$ is geometrically-special. Then, there is a point $x$ in $B\setminus Z$, a smooth proper curve $C$, and a point $c$ in $C(k)$ such that the universal evaluation map 
\[
C \times \underline{\Hom}^{nc}((C,c), ((B,\Delta),x)) )\to C\times B
\]
is dominant. Note that $H := \underline{\Hom}^{nc}((C,c), ((B,\Delta),x))$ is a closed subscheme of \[\underline{\Hom}^{nc}(C,(B,\Delta)) \setminus \underline{\Hom}(C,Z),\]  as it is given by the scheme-theoretic fibre of the morphism $\mathrm{ev}_c\colon \underline{\Hom}(C,(B,\Delta))\to B$ over $x$. Since  $$\underline{\Hom}^{nc}(C,(B,\Delta)) \setminus \underline{\Hom}(C,Z)$$ is of finite type over $k$, it follows that $H$ is of finite type. In particular, since $C \times H \to C \times B$ is a dominant morphism of finite type schemes over $k$, we have that $\dim H \geq \dim B = 2$. However, by our assumptions and Corollary \ref{corollary:boundedness},  the scheme $H$ is at most one-dimensional. This contradiction completes the proof.
\end{proof}

\begin{theorem}\label{thm:bt}
If $X$ is a BTCP-threefold, then $X$ is not geometrically-special.  
\end{theorem}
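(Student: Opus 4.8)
The plan is to reduce to the orbifold base $(B,\Delta)$ of the associated elliptic fibration $\pi\colon X\to(B,\Delta)$ and to transport non-geometric-specialness along $\pi$. First I would record that $(B,\Delta)$ is not geometrically-special: it is of general type (Lemma~\ref{lemma:bt}) and satisfies $c_1(B,\Delta)^2>c_2(B,\Delta)$ by the definition of a BTCP-threefold, so Theorem~\ref{thm:bogomolov} shows it is pseudo-algebraically hyperbolic, hence pseudo-$1$-bounded; since $B$ has Kodaira dimension one it is non-uniruled, and therefore Corollary~\ref{cor:surface_not_gs} applies. Unwinding the definition, I would fix a dense open $W\subseteq B\setminus\supp\Delta$ such that, for every smooth projective connected curve $C$, every $c\in C(k)$, every $w\in W(k)$, and every sequence of pairwise distinct orbifold morphisms $g_i\colon C\to(B,\Delta)$ with $g_i(c)=w$, the union $\bigcup_i\Gamma_{g_i}$ is not dense in $C\times B$.

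Next I would suppose, for contradiction, that $X$ is geometrically-special and apply the definition to the dense open $U:=\pi^{-1}(W)$: this produces a smooth projective connected curve $C$, points $c\in C(k)$ and $u\in U(k)$, and pairwise distinct morphisms $f_i\colon C\to X$ with $f_i(c)=u$ such that $\bigcup_i\Gamma_{f_i}$ is dense in $C\times X$. Set $b:=\pi(u)\in W(k)$. Since the fibre $X_b$ is one-dimensional, the graphs of those $f_i$ for which $\pi\circ f_i$ is constant (necessarily $\equiv b$) all lie in the proper closed subset $C\times X_b$ of the irreducible fourfold $C\times X$; by irreducibility, the union of the remaining graphs is still dense in $C\times X$, and in particular there are infinitely many indices with $g_i:=\pi\circ f_i$ non-constant, as a finite union of graphs cannot be dense. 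For each such $i$, the image $g_i(C)$ contains $b\notin\supp\Delta$, so since $\pi$ is a flat orbifold morphism (Lemma~\ref{lem:orbi_map}) the composite $g_i\colon C\to(B,\Delta)$ is an orbifold morphism, with $g_i(c)=b$.

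Finally I would produce the forbidden configuration on $(B,\Delta)$. If only finitely many distinct morphisms $g^{(1)},\dots,g^{(N)}\colon C\to B$ occurred among the $g_i$, then the corresponding graphs $\Gamma_{f_i}$ would all lie in $\bigcup_{j=1}^{N}\bigl(C\times\pi^{-1}(g^{(j)}(C))\bigr)$, a finite union of closed subsets of $C\times X$ of dimension at most three, contradicting density; hence infinitely many distinct $g_i$ occur. Picking one $f_i$ for each distinct value of $g_i$ and applying the surjective morphism $\operatorname{id}_C\times\pi\colon C\times X\to C\times B$, which carries each $\Gamma_{f_i}$ onto $\Gamma_{g_i}$, I obtain infinitely many pairwise distinct orbifold morphisms $g_i\colon C\to(B,\Delta)$ with $g_i(c)=b\in W(k)$ and with $\bigcup_i\Gamma_{g_i}$ dense in $C\times B$, contradicting the choice of $W$. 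This shows that $X$ is not geometrically-special. I expect the only genuine subtlety to be this transfer step: one must ensure that the family pushed down to $B$ is still infinite and still has dense graph union even though the maps $\pi\circ f_i$ may collapse to a point or repeat among finitely many morphisms, and it is precisely here that the threefold hypothesis is used, since fibres of $\pi$ and preimages in $X$ of curves in $B$ are then too low-dimensional to carry a Zariski-dense subset of $C\times X$.
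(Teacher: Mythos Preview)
Your proof is correct and follows the same strategy as the paper's: show that $(B,\Delta)$ is not geometrically-special via Corollary~\ref{cor:surface_not_gs}, then derive a contradiction by pushing a covering family on $X$ down along $\pi$. The only difference is that the paper dispatches the transfer step in one line by invoking \cite[Lemma~3.14]{JR} (surjective orbifold morphisms preserve geometric specialness), whereas you reprove that lemma by hand in this specific situation via the dimension count on $C\times X_b$ and $C\times\pi^{-1}(g^{(j)}(C))$; your argument is self-contained, and your remark that the threefold hypothesis enters here is slightly misleading, since the cited lemma and essentially your argument work in any dimension.
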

\begin{proof}  
Suppose that $X$ were geometrically-special. Consider the associated elliptic fibration $X \to (B,\Delta)$. Since $X \to (B,\Delta)$ is a surjective orbifold morphism (Lemma \ref{lem:orbi_map}), the orbifold pair $(B,\Delta)$ is geometrically-special \cite[Lemma~3.14]{JR}. However, since $B$ is non-uniruled and $(B, \Delta)$ is a pseudo-algebraically hyperbolic orbifold surface of general type (Corollary \ref{cor:Bdelta}), the orbifold  $(B,\Delta)$ is not geometrically-special (Corollary \ref{cor:surface_not_gs}). This contradiction concludes the proof.  
\end{proof}
%
%\noindent 
%\textbf{Conflict of Interest Statement.}  On behalf of all authors, the corresponding author states that there is no conflict of interest. \\
%
%\noindent 
%\textbf{Data Availability.} Data  sharing not applicable to this article as no datasets were generated or analysed during
%the current study.

\bibliographystyle{alpha}
\bibliography{orbi}{}
\vfill
\end{document}